\renewcommand{\citepunct}{;\penalty\citemidpenalty\ }
\newtheoremstyle{cited}{.5\baselineskip\@plus.2\baselineskip\@minus.2\baselineskip}{.5\baselineskip\@plus.2\baselineskip\@minus.2\baselineskip}{\itshape}{}{\bfseries}{\bfseries .}{5pt plus 1pt minus 1pt}{\thmname{#1}\thmnumber{ #2}\thmnote{ \normalfont#3}}
\theoremstyle{cited}
\newtheorem{theorem}{Theorem}[section]
\newtheorem{corollary}[theorem]{Corollary}
\newtheorem{lemma}[theorem]{Lemma}
\newtheorem{proposition}[theorem]{Proposition}
\newtheorem{step}{Step}[theorem]
\newtheorem{claim}{Claim}[theorem]
\newtheorem{alphthm}{Theorem}
\newenvironment{customthm}[1]
  {\innercustomthm}
  {\endinnercustomthm}
\newtheoremstyle{citeddef}{.5\baselineskip\@plus.2\baselineskip\@minus.2\baselineskip}{.5\baselineskip\@plus.2\baselineskip\@minus.2\baselineskip}{}{}{\bfseries}{\bfseries .}{5pt plus 1pt minus 1pt}{\thmname{#1}\thmnumber{ #2}\thmnote{ \normalfont#3}}
\theoremstyle{citeddef}
\newtheorem{definition}[theorem]{Definition}
\newtheorem{assumptions}[theorem]{Assumptions}
\newtheorem{setup}[theorem]{Setup}
\newtheoremstyle{citedrem}{.5\baselineskip\@plus.2\baselineskip\@minus.2\baselineskip}{.5\baselineskip\@plus.2\baselineskip\@minus.2\baselineskip}{}{}{\itshape}{\itshape .}{5pt plus 1pt minus 1pt}{\thmname{#1}\thmnumber{ \normalfont#2}\thmnote{ \normalfont#3}}
\theoremstyle{citedrem}
\newtheorem{remark}[theorem]{Remark}
\DeclareMathOperator{\AId}{AId}
\DeclareMathOperator{\Bl}{Bl}
\DeclareMathOperator{\Exc}{Exc}
\DeclareMathOperator{\Hom}{Hom}
\DeclareMathOperator{\HHom}{\mathscr{H}\kern-3pt\mathit{om}}
\DeclareMathOperator{\RRHHom}{\mathbf{R}\mathscr{H}\kern-3pt\mathit{om}}
\DeclareMathOperator{\Proj}{Proj}
\DeclareMathOperator{\Spec}{Spec}
\DeclareMathOperator{\ZR}{ZR}
\DeclareMathOperator{\depth}{depth}
\DeclareMathOperator{\snc}{snc}
\newcommand{\CC}{\mathbf{C}}
\newcommand{\FF}{\mathbf{F}}
\newcommand{\HH}{\mathbf{H}}
\newcommand{\PP}{\mathbf{P}}
\newcommand{\QQ}{\mathbf{Q}}
\newcommand{\RR}{\mathbf{R}}
\newcommand{\kk}{\mathbf{k}}
\newcommand{\cO}{\mathcal{O}}
\newcommand{\fb}{\mathfrak{b}}
\newcommand{\fm}{\mathfrak{m}}
\newcommand{\fp}{\mathfrak{p}}
\newcommand{\sA}{\mathscr{A}}
\newcommand{\sE}{\mathscr{E}}
\newcommand{\sF}{\mathscr{F}}
\newcommand{\sI}{\mathscr{I}}
\newcommand{\sL}{\mathscr{L}}
\newcommand{\sM}{\mathscr{M}}
\newcommand{\id}{\mathrm{id}}
\newcommand{\pr}{\mathrm{pr}}
\newcommand{\red}{\mathrm{red}}
\newcommand{\sm}{\mathrm{sm}}
\newcommand{\hooklongrightarrow}{\lhook\joinrel\longrightarrow}
\DeclarePairedDelimiterX\Set[1]\{\}{#1}
\newcommand{\longtwoheadrightarrow}{\mathrel{\text{\longtwo@rightarrow}}}
\newcommand{\longtwo@rightarrow}{%
  \sbox0{$\m@th\longrightarrow$}%
  \smash{\rlap{\kern0.175\wd0 \clipbox{{.3\width} {-\height} 0pt {-\height}}{$\m@th\longrightarrow$}}}%
  $\m@th\longrightarrow$%
}
\begin{document}

\title{Relative vanishing theorems for \textnormal{\textbf{Q}}-schemes}
\author{Takumi Murayama}
\address{Department of Mathematics\\Purdue University\\150 N. University Street\\West Lafayette, IN 47907-2067\\USA}
\email{\href{mailto:murayama@purdue.edu}{murayama@purdue.edu}}
\urladdr{\url{https://www.math.purdue.edu/~murayama/}}

\thanks{This material is based upon work supported by the National Science
        Foundation under Grant Nos.\ DMS-1902616 and DMS-2201251.}
\subjclass[2020]{Primary 14F17; Secondary 14E15, 14A15, 14B15, 13F40, 14B05}

\keywords{vanishing theorems, excellent schemes, the Grothendieck limit theorem,
          Zariski--Riemann spaces, rational singularities}

\makeatletter
  \hypersetup{
     pdfsubject = {Primary 14F17; Secondary 14E15, 14A15, 14B15, 13F40, 14B05},
    pdfkeywords = {vanishing theorems, excellent schemes, the Grothendieck limit theorem,
                   Zariski--Riemann spaces, rational singularities}
  }
\makeatother

\begin{abstract}
  We prove the relative Grauert--Riemenschneider vanishing, Kawamata--Viehweg
  vanishing, and Koll\'ar injectivity theorems for proper morphisms of schemes
  of equal characteristic zero,
  solving conjectures of Boutot and Kawakita.
  Our proof uses the Grothendieck limit theorem for sheaf cohomology and
  Zariski--Riemann spaces.
  We also show these vanishing and injectivity theorems hold for locally
  Moishezon (resp.\ projective)
  morphisms of quasi-excellent algebraic spaces admitting dualizing
  complexes and semianalytic germs of complex analytic spaces
  (resp.\ quasi-excellent formal schemes admitting dualizing complexes,
  rigid analytic spaces, Berkovich spaces, and adic spaces
  locally of weakly finite type over a field), all in equal characteristic zero.
  \par We give many applications of our vanishing results.
  For example, we extend Boutot's theorem to all Noetherian
  $\mathbf{Q}$-algebras by showing that if $R \to R'$ is a cyclically pure map
  of $\mathbf{Q}$-algebras and $R'$ is pseudo-rational, then
  $R$ is pseudo-rational.
  This solves a conjecture of Boutot and affirmatively answers a question of
  Schoutens.
  The proof of this Boutot-type result
  uses a new characterization of pseudo-rationality and rational
  singularities using
  Zariski--Riemann spaces.
  This characterization is also used in the proofs of our vanishing and
  injectivity theorems and is of independent interest.
\end{abstract}

\maketitle
\vfil
\setcounter{tocdepth}{1}
{\hypersetup{hidelinks}\tableofcontents}
\newpage

\section{Introduction}\label{sect:intro}
Let $X$ be a smooth complex projective variety.
Kodaira's vanishing theorem \cite[Theorem 2]{Kod53} says that if $\sL$ is an ample
invertible sheaf on $X$, then $H^i(X,\omega_X \otimes \sL) = 0$
for all $i > 0$.
Kodaira's theorem and its generalizations have since become indispensable tools
in algebraic geometry over fields of characteristic zero,
in particular in birational geometry and the minimal model program (see, e.g.,
\cite{KMM87,EV92,KM98,Laz04a,Laz04b,Fuj17}).
\par While the central goal of birational geometry is to study birational
equivalences between projective varieties, this often requires working with
more general schemes.
For example,
Hironaka's original proof of resolution of singularities over
fields of characteristic zero uses an inductive strategy involving
schemes of finite type over quasi-excellent local $\QQ$-algebras (see
\cite[p.\ 162]{Hir64}).
In \citeleft\citen{dFEM10}\citepunct \citen{dFEM11}\citeright, de Fernex, Ein,
and Musta\c{t}\u{a} work with schemes of finite type over formal power series
rings to prove Shokurov's ACC conjecture for log canonical thresholds on
complex algebraic varieties whose singularities belong to a bounded family
(Shokurov's conjecture has since
been proved in general \cite{HMX14}).
In \cite{Kaw15}, Kawakita also works over formal power series rings to prove
a special case of Shokurov's ACC conjecture for minimal log discrepancies
on smooth threefolds.\medskip
\par A problem in these more general contexts is the lack of
Kodaira-type vanishing theorems.
One of the most fundamental generalizations of Kodaira's theorem
for the minimal model program is the Kawamata--Viehweg vanishing theorem
\citeleft\citen{Kaw82}\citemid Theorem 1\citepunct \citen{Vie82}\citemid Theorem
I\citeright, relative versions of which are known to hold for proper
morphisms of varieties over an algebraically closed field of characteristic
zero \cite[Theorem 1-2-3]{KMM87}, or for proper morphisms of
complex analytic spaces that are Moishezon, i.e., bimeromorphic to a projective
morphism \cite[Theorem 3.7]{Nak87}.
In particular, this has been an issue in non-Archimedean geometry, where
the Kawamata--Viehweg vanishing theorem is only known for proper
morphisms to curves
\citeleft\citen{BFJ16}\citemid Appendix B\citepunct \citen{MN15}\citemid
\S5\citeright.\medskip
\par Our main result is the following generalization of the Kawamata--Viehweg
vanishing theorem to proper morphisms of excellent schemes of equal characteristic zero
with dualizing
complexes, which resolves conjectures of Boutot \cite[Remarque 1 on p.\ 67]{Bou87}
and Kawakita \cite[Conjecture 1.1]{Kaw15}.
In fact, we also show a version of Koll\'ar's injectivity theorem \cite[Theorem
2.2]{Kol86} for klt pairs, which for varieties is due to Kawamata \cite[Theorem
3.2]{Kaw85} (see also \cite[Corollaire 1.11]{EV87}).
Below, a morphism $f\colon X \to Y$ is \textsl{maximally dominating} if every
generic point of an irreducible component of $X$ maps to a generic point of an
irreducible component of $Y$ \cite[Expos\'e II, D\'efinition 1.1.2]{ILO14}.
Proper surjective morphisms of integral schemes are maximally dominating.
\begin{alphthm}\label{thm:kvvanishing}
  Let $f\colon X \to Y$ be a proper maximally dominating morphism of
  Noetherian schemes of equal characteristic zero
  such that $Y$ has a dualizing complex $\omega_Y^\bullet$.
  Let $\Delta$ be an effective $\QQ$-Weil divisor on $X$.
  Suppose one of the following conditions holds:
  \begin{enumerate}[label=$(\alph*)$]
    \item $X$ is regular, $\Delta$ has simple normal crossings support, and
      $\lfloor \Delta \rfloor = 0$.
    \item $X$ is normal, $(X,\Delta)$ is klt, and $Y$ is locally excellent.
  \end{enumerate}
  Denote by $\omega_X$ the unique nonzero cohomology sheaf of
  $f^!\omega_Y^\bullet$ (after possibly applying shifts on each connected
  component of $X$) and denote by $K_X$ an associated canonical divisor.
  Consider a Cartier divisor $N$ on $X$ such that $N \sim_\QQ K_X + M + \Delta$ for a
  $\QQ$-Cartier divisor $M$ on $X$.
  \begin{enumerate}[label=$(\roman*)$,ref=\roman*]
    \item\label{thm:kvvanishingvan} Suppose $M$ is $f$-nef and $f$-big.
      Then, we have
      \[
        R^if_*\bigl(\cO_X(N)\bigr) = 0
      \]
      for all $i > 0$.
    \item\label{thm:kvvanishinginj} Suppose $M$ is $f$-semi-ample.
      Let $D$ be an effective Weil divisor on $X$ for which there exists an
      integer $n > 0$ such that $nM$ is Cartier and an
      effective Weil divisor $D'$ on $X$ such that $\cO_X(D+D')
      \simeq \cO_X(nM)$.
      Then,
      the canonical morphisms
      \[
        R^if_*\bigl(\cO_X(N)\bigr) \longrightarrow
        R^if_*\bigl(\cO_X(N+D)\bigr)
      \]
      induced by the inclusion $\cO_X \hookrightarrow \cO_X(D)$ are injective
      for all $i$.
  \end{enumerate}
\end{alphthm}
\par Since Kodaira-type vanishing theorems are false in both positive \cite{Ray78}
and mixed characteristic (Totaro; see \cite[Footnote 1 on p.\ 70]{BMPSTWW}), Theorem
\ref{thm:kvvanishing} and the methods of this paper yield
the most general versions of the Kawamata--Viehweg vanishing
theorem possible for proper morphisms of schemes of arbitrary dimension.
\par We note that on each connected component of $X$, the exceptional pullback
$f^!\omega_Y^\bullet$ of the dualizing complex $\omega_Y^\bullet$
is concentrated in one degree by
local duality \cite[Chapter V, Corollary 6.3]{Har66} since $X$ is
Cohen--Macaulay by assumption in the regular case or by \cite[Theorem 3.1]{BK}
in the klt case (see Theorem \ref{thm:bk31} and Remark \ref{rem:bkcm}).
\par As far as we are aware, the only previously known cases of Theorem
\ref{thm:kvvanishing}$(\ref{thm:kvvanishingvan})$
outside of the context of algebraic varieties or analytic
spaces are when $\dim(X) \le 3$ or $\dim(Y) = 1$.
The case when $\dim(X) = 2$ (which also holds in arbitrary characteristic)
is essentially due to Lipman \cite[Theorem 2.4]{Lip78} when $f$ is
generically finite (see \cite[Theorem 10.4]{Kol13}).
Tanaka \cite[\S3.1]{Tan18} used Lipman's methods to prove the case when $\dim(X)
= 2$ and $f$ is arbitrary (again in arbitrary characteristic).
When $\dim(X) = 3$ and $f$ is birational, Bernasconi and Koll\'ar showed that a
version of Theorem \ref{thm:kvvanishing}$(\ref{thm:kvvanishingvan})$
holds when the residue fields of $Y$
are perfect fields of characteristic $\notin \{2,3,5\}$ \cite[Theorem
2]{BK}.
Some cases when $Y$ is the spectrum of a complete DVR are due to
Boucksom--Favre--Jonsson \cite[Theorem B.3]{BFJ16} and
Musta\c{t}\u{a}--Nicaise \cite[Theorem 5.2.3 and Remark 5.3]{MN15}.
Musta\c{t}\u{a} and Nicaise also proved a version of Theorem
\ref{thm:kvvanishing}$(\ref{thm:kvvanishinginj})$, again under the assumption
that $Y$ is the spectrum of a complete DVR \cite[Theorem 5.3.1 and Remark
5.4]{MN15}.\medskip
\par To prove Theorem \ref{thm:kvvanishing} in the regular case, after replacing $Y$ by
$\Spec(\hat{\cO}_{Y,y})$ for each $y \in Y$, we can use cyclic covers and
log resolutions to reduce to the case when $L$ is $f$-ample and $\Delta = 0$.
We then show the following:
\begin{alphthm}\label{thm:mainvanishing}
  Let $f\colon X \to Y$ be a proper maximally dominating morphism of
  Noetherian schemes of equal characteristic zero
  such that $X$ is locally pseudo-rational and such that $Y$
  has a dualizing complex $\omega_Y^\bullet$.
  Denote by $\omega_X$ the unique nonzero cohomology sheaf of
  $f^!\omega_Y^\bullet$ (after possibly applying shifts on each connected
  component of $X$).
  Consider an invertible sheaf $\sL$ on $X$.
  \begin{enumerate}[label=$(\roman*)$,ref=\roman*]
    \item\label{thm:mainvanishingkv}
      Suppose $\sL$ is $f$-big and $f$-semi-ample.
      Then, we have
      \[
        R^if_*(\omega_X \otimes_{\cO_X} \sL) = 0
      \]
      for all $i > 0$.
    \item\label{thm:mainvanishinggr}
      Suppose $\sL$ is $f$-semi-ample.
      Let $D$ be an effective Weil divisor on $X$ for which there exists an
      integer $n > 0$ and an
      effective Weil divisor $D'$ on $X$ such that $\cO_X(D+D')
      \simeq \sL^{\otimes n}$.
      Then, the canonical morphisms
      \[
        R^if_*(\omega_X \otimes_{\cO_X} \sL^{\otimes k}) \longrightarrow
        R^if_*\bigl(\omega_X \otimes_{\cO_X} \sL^{\otimes k}(D)\bigr)
      \]
      induced by the inclusion $\cO_X \hookrightarrow \cO_X(D)$ are injective
      for all $i$ and for all $k > 0$.
  \end{enumerate}
\end{alphthm}
Pseudo-rationality is a characteristic-free version of rational
singularities introduced by Lipman and Teissier \cite{LT81}
that does not require resolutions of singularities, quasi-excellence, or the
existence of dualizing complexes.
Note that regular rings are locally pseudo-rational \cite[\S4]{LT81}.
The two statements $(\ref{thm:mainvanishingkv})$ and
$(\ref{thm:mainvanishinggr})$ are relative versions of the
Grauert--Riemenschneider vanishing theorem
\cite[Satz 2.1]{GR70} and Koll\'ar's injectivity theorem \cite[Theorem
2.2]{Kol86}, respectively.
We also show dual versions of Theorems \ref{thm:kvvanishing} and
\ref{thm:mainvanishing} analogous to Hartshorne and Ogus's dual
formulation \cite[Proposition 2.2]{HO74} of the relative Grauert--Riemenschneider
vanishing theorem \cite[Satz 2.3]{GR70} (see Theorem
\ref{thm:maindualvanishing})
and Koll\'ar's local version of
Kawamata--Viehweg vanishing \cite[Corollary 20]{Kol11} (see Theorem
\ref{thm:kvvanishingdual}).
These dual statements have the advantage of not requiring that $Y$
has a dualizing complex.
\subsection*{Outline of proof}
\par We outline the proof of Theorem \ref{thm:mainvanishing}.
For simplicity, we consider the case when $f\colon X \to Y$ is a proper
surjective morphism of integral schemes.
The proof of Theorem \ref{thm:mainvanishing} proceeds by
approximating the morphism $f\colon X \to Y$ by proper surjective morphisms of
varieties over $\QQ$,
and then deducing the vanishing in Theorem 
\ref{thm:mainvanishing} from the usual statements for
varieties over a field of characteristic zero.
This approach runs into two major difficulties.
\begin{itemize}
  \item While we can write $f$ as the limit of proper surjective morphisms
    $f_\lambda\colon X_\lambda \to Y_\lambda$ of varieties over $\QQ$ using the
    method of relative Noetherian approximation \cite[\S8]{EGAIV3}, we
    cannot ensure that the $X_\lambda$ are smooth, even if $X$ is regular.
  \item Even though direct images behave well under limits by
    the Grothendieck limit theorem \cite[Expos\'e VI, Th\'eor\`eme 8.7.3]{SGA42} (see
    Theorem \ref{thm:fklimitsdirectimage}), the
    sheaves $\omega_X$ are not known to behave well under limits.
\end{itemize}
\par To fix the smoothness of $X_\lambda$, we want to replace the inverse system
$\{X_\lambda\}_{\lambda \in \Lambda}$ by an inverse system of
resolutions of singularities of
the $X_\lambda$ using Hironaka's resolutions of singularities \cite[Chapter 0,
\S3, Main Theorem I]{Hir64}.
We were unable to choose resolutions of singularities compatibly as $\lambda \in
\Lambda$ varies, so instead we take the inverse system consisting of
\emph{all possible} resolutions of singularities of the $X_\lambda$.
\par A major technical difficulty is to show that the resulting inverse limit is a
familiar locally ringed space, called the \textsl{Zariski--Riemann space} $\ZR(X)$
of $X$.
The Zariski--Riemann space was defined by Zariski for varieties
\citeleft\citen{Zar40}\citemid Definition A.II.5\citepunct \citen{Zar44}\citemid
\S2\citeright\ and by Nagata for Noetherian separated schemes \cite[\S3]{Nag63}.
Thus, even though our initial interest was to show vanishing theorems for
schemes, a surprising and novel aspect of our proof is that we
must leave the world of schemes and consider more general locally
ringed spaces.\medskip
\par To fix the issues with direct images and limits and with the sheaves
$\omega_X$, we use duality to prove statements about local cohomology instead.
The reason this works is that while $\omega_X$ is not known to behave well under
limits, structure sheaves do behave well under limits.
Put together, the proof of Theorems \ref{thm:kvvanishing} and
\ref{thm:mainvanishing} proceeds as follows.
Below, we concentrate on Theorem
\ref{thm:mainvanishing}$(\ref{thm:mainvanishingkv})$.
\begin{enumerate}[label=$(\textup{\Roman*})$,ref=\textup{\Roman*}]
  \item We replace $Y$ with $\Spec(\hat{\cO}_{Y,y})$ to assume that $Y$ is
    the spectrum of an excellent local domain $(R,\fm)$ containing $\QQ$.
  \item Using Lipman's local-global duality \cite[Theorem on p.\ 188]{Lip78} (see
    Lemma \ref{lem:lipmandual} and Proposition \ref{prop:aisastar}), we
    translate the vanishing statements in Theorem \ref{thm:mainvanishing} to
    vanishing statements about the local cohomology modules
    $H^i_Z(\sL^{-1})$ where $Z = f^{-1}(\fm)$.
    These dual statements are analogous to \cite[Proposition
    2.2]{HO74}.
  \item\label{intro:stepzrx}
    We prove a new characterization of pseudo-rational rings in equal
    characteristic zero via Zariski--Riemann spaces (Theorem
    \ref{thm:kempflike}), which is of independent interest.
    This characterization
    shows that the higher direct images of the structure sheaf under the
    projection morphism $\pi\colon\ZR(X) \to X$ from the Zariski--Riemann space of $X$
    vanish.
    It then suffices to show that vanishing holds for the composition
    \[
      \ZR(X) \overset{\pi}{\longrightarrow} X \overset{f}{\longrightarrow}
      \Spec(R).
    \]
    See Definition \ref{def:zr} for the definition of the Zariski--Riemann space
    $\ZR(X)$ associated to $X$.
  \item\label{intro:stepapprox}
    Using the method of relative Noetherian approximation \cite[\S8]{EGAIV3}
    and Hironaka's resolution of singularities \cite[Chapter 0, \S3, Main
    Theorem
    I]{Hir64}, we write the composition in $(\ref{intro:stepzrx})$ as the
    limit of morphisms
    \[
      W_{\lambda,p} \xrightarrow{g_{\lambda,p}} X_\lambda
      \overset{f_\lambda}{\longrightarrow} \Spec(R_\lambda)
    \]
    of varieties over $\QQ$ where the $W_{\lambda,p}$ are smooth (Lemma
    \ref{lem:approx}).
    By a version of the Grothendieck limit theorem 
    \cite[Expos\'e VI, Th\'eor\`eme 8.7.3]{SGA42} for local cohomology modules (Theorem
    \ref{lem:lccolimits}),
    the usual vanishing statements for varieties over $\QQ$ then imply vanishing
    and injectivity theorems for Zariski--Riemann spaces (Theorem
    \ref{thm:zrmaindualvanishing}) that require no assumptions on the
    singularities of $X$ apart from integrality.
    Using the characterization of pseudo-rational rings in
    $(\ref{intro:stepzrx})$, we obtain Theorem \ref{thm:mainvanishing} as a
    consequence.
\end{enumerate}
\par While the technique of applying the Grothendieck limit theorem on one hand
and passing to covers of $X$ that satisfy vanishing theorems on the other have
been applied before, as far as we are aware, the idea to pass to the
Zariski--Riemann space to show vanishing theorems is new.\medskip
\par We describe two sources of inspiration for the Grothendieck limit theorem and
for passing to covers.
The inspiration to use the Grothendieck limit theorem comes from Panin's
proof of the equicharacteristic case of Gersten's conjecture in algebraic
$K$-theory \cite[Theorem A]{Pan03}.
Since Gersten's conjecture is a statement about regular local rings, however,
Panin was able to use N\'eron--Popescu desingularization
\citeleft\citen{Pop86}\citemid Theorem 2.4\citepunct \citen{Pop90}\citemid p.\
45\citepunct\citen{Swa98}\citemid Theorem 1.1\citeright\ to approximate the
regular local rings containing a field $k$ that appear in this special case
of Gersten's conjecture with essentially smooth
algebras over that field $k$ and stay in the category of rings.
By doing so, Panin reduced the equicharacteristic case of Gersten's conjecture
to the geometric case already shown by Quillen \cite[Theorem 5.11]{Qui73}.
This strategy was also applied to related questions in mixed characteristic in
\cite{Ska}.\medskip
\par Our approach is also related to the theory of big Cohen--Macaulay
modules and big Cohen--Macaulay algebras as introduced by Hochster
\cite{Hoc73deep,Hoc75queens,Hoc75cbms}.
In equal characteristic $p > 0$, Hochster and Huneke showed that
the integral closure $X^+$ of $X$ in an algebraic
closure of its function field satisfies a Kodaira-type vanishing theorem
\cite[Theorem 1.2]{HH92}.
Thus, the Zariski--Riemann space $\ZR(X)$ plays a similar role in equal
characteristic zero that the scheme $X^+$ does in equal characteristic $p > 0$.
Smith later asked whether analogues of the Grauert--Riemenschneider or
Kawamata--Viehweg vanishing theorems for $X^+$ hold in equal characteristic $p > 0$
\citeleft\citen{Smi97}\citemid Theorem 2.1\citepunct \citen{Smi97err}\citeright.
Bhatt answered Smith's question in \cite[\S7]{Bha12}.
This approach was recently applied in mixed characteristic
\citeleft\citen{Bha}\citemid Theorem 5.1\citepunct \citen{TY}\citemid
\S3.1\citepunct \citen{BMPSTWW}\citemid \S3\citeright, where $X^+$ was
shown to satisfy Kodaira-type vanishing theorems.
\par For rings $R$ essentially of finite type over the complex numbers,
Roberts showed that the derived pushforward $\RR f_*\cO_{X}$ of the
structure sheaf of a resolution of singularities $f\colon X \to \Spec(R)$ is a
complex which is Cohen--Macaulay in a suitable sense \cite[Corollary on p.\
224]{Rob80} (see also \cite[Proposition 4.17]{IMSW21}).
Our vanishing theorem for Zariski--Riemann spaces (Theorem
\ref{thm:zrmaindualvanishing}) can be seen as a version of Roberts's
result that holds for all integral Noetherian local $\QQ$-algebras.
See \cite[Proposition 4.17]{IMSW21}, which uses Theorems \ref{thm:kvvanishing} and
\ref{thm:mainvanishing} in this paper to show that every excellent $\QQ$-algebra
with a dualizing complex has a maximal Cohen--Macaulay complex that is
equivalent to a graded-commutative dg algebra.
\subsection*{Applications}
Theorems \ref{thm:kvvanishing} and
\ref{thm:mainvanishing} allow one to extend many results to the setting of
excellent rings and schemes of equal characteristic
zero that for varieties rely on Kodaira-type vanishing theorems and resolutions
of singularities.
We describe some examples of these applications.\medskip
\par In joint work with Shiji Lyu \cite[Theorems A and B]{LM}, we use Theorem
\ref{thm:kvvanishing} to prove finite generation of the relative adjoint ring
for proper morphisms of excellent schemes of equal characteristic zero
with dualizing complexes
in the vein of \cite[Theorem 1.2]{BCHM10}.
Our proof uses the approach of Cascini and Lazi\'c \cite{CL12}.
We then prove that one can run the relative minimal model program with scaling
in the sense of \cite{BCHM10,HM10} in this setting,
thereby resolving a question of Koll\'ar \cite[(23)]{Kolhol}.
Using Theorem \ref{thm:kvvanishing} and new GAGA theorems for Grothendieck
duality and dualizing complexes, we then extend the relative minimal model program
with scaling to many categories at once, namely to the categories of
quasi-excellent algebraic spaces and formal schemes with dualizing complexes,
semianalytic germs of complex analytic spaces, Berkovich spaces, rigid analytic
spaces, and adic spaces locally of weakly finite type over a field, all in equal
characteristic zero.
The case for algebraic spaces of finite type over a field was previously shown
by Villalobos-Paz \cite{VP} and the case for
complex analytic spaces was previously shown in \cite{Fuj,DHP}.
In addition, \cite{BMPSTWW} uses Theorem \ref{thm:kvvanishing} to establish
the minimal model program for threefolds of mixed characteristic for
residue characteristics $\notin \{2,3,5\}$, where the characteristic zero fibers
are often excellent schemes of equal characteristic zero and are not
necessarily of finite type over a field.
Lyu and the author of the present paper also extend these results to other
categories of spaces \cite[Theorem $\textup{A}^p$]{LM}.
\par In this paper, using the GAGA-type results obtained in our joint work with
Lyu \cite{LM}, we can show the following vanishing and injectivity
theorems for algebraic spaces, formal schemes, complex analytic spaces, and
non-Archimedean analytic spaces.
\begin{customthm}{\ref*{thm:kvvanishing}\tprime}\label{thm:kvvanishingothercats}
  Let $f\colon X \to Y$ be a proper surjective morphism in one of the following
  categories where $X$ and $Y$ are integral and $X$ is normal:
  \begin{enumerate}[label=$(\textup{\Roman*})$,ref=\textup{\Roman*}]
    \item[$(0)$]
    \makeatletter
    \protected@edef\@currentlabel{0}
    \phantomsection
    \label{setup:introalgebraicspaces}
    \makeatother
      The category of Noetherian algebraic spaces of equal
      characteristic zero
      over a scheme $S$ admitting dualizing complexes.
    \item\label{setup:introformalqschemes}
      The category of Noetherian formal schemes of equal
      characteristic zero
      admitting $c$-dualizing complexes.
    \item\label{setup:introcomplexanalyticgerms} The category of semianalytic
      germs of complex analytic spaces.
    \item\label{setup:introberkovichspaces} The category of $k$-analytic spaces
      over a complete non-Archimedean field $k$ of characteristic zero.
    \makeatletter
    \item[{$(\ref*{setup:introberkovichspaces}')$}]
    \protected@edef\@currentlabel{\ref*{setup:introberkovichspaces}'}
    \phantomsection
    \label{setup:introrigidanalyticspaces}
    \makeatother
      The category of rigid
      $k$-analytic spaces over a complete non-trivially valued
      non-Archimedean field $k$ of characteristic zero.
    \item\label{setup:introadicspaces}
      The category of adic spaces locally of weakly finite type over a complete
      non-trivially valued non-Archimedean field $k$ of characteristic zero.
  \end{enumerate}
  In cases $(\ref{setup:introformalqschemes})$
  and $(\ref{setup:introadicspaces})$,
  we assume that $f$ is projective.
  In cases $(\ref{setup:introberkovichspaces})$ and
  $(\ref{setup:introrigidanalyticspaces})$, we assume that either $f$ is
  projective or that $Y$ is a point.
  \par Let $K_X$ be a canonical divisor on $X$ chosen
  compatibly with a dualizing complex on $Z$,\footnote{For example, when $Z$ is a
  variety over $k$ or in cases $(\ref{setup:introcomplexanalyticgerms})$,
  $(\ref{setup:introberkovichspaces})$,
  $(\ref{setup:introrigidanalyticspaces})$, and
  $(\ref{setup:introadicspaces})$,
  we can choose $K_X$ so that
  $\cO_{X_\sm}( (K_X)_{\vert X_\sm})
  = \det(\Omega_{X_\sm/k})$ where $X_\sm$ is the smooth locus of $X$.
  See \cite[Theorem 24.4$(iii)$, Theorem 24.6$(iii)$, and Theorem 24.8$(iv)$]{LM}.}
  and let $\Delta$ be an effective $\QQ$-Weil divisor on $X$.
  Suppose one of the following conditions holds:
  \begin{enumerate}[label=$(\alph*)$]
    \item $X$ is regular, $\Delta$ has simple normal crossings support, and
      $\lfloor \Delta \rfloor = 0$.
    \item $X$ is normal and $(X,\Delta)$ is klt.
      In cases $(\ref{setup:introalgebraicspaces})$ and
      $(\ref{setup:introformalqschemes})$, we also assume that $Y$ is
      quasi-excellent.
  \end{enumerate}
  Consider a Cartier divisor $N$ on $X$ such that $N \sim_\QQ K_X + M + \Delta$ for a
  $\QQ$-Cartier divisor $M$ on $X$.
  \begin{enumerate}[label=$(\roman*)$,ref=\roman*]
    \item\label{thm:kvvanishingothercatsvan} Suppose $M$ is $f$-nef and $f$-big.
      Then, we have
      \[
        R^if_*\bigl(\cO_X(N)\bigr) = 0
      \]
      for all $i > 0$.
    \item\label{thm:kvvanishingothercatsinj} Suppose $M$ is $f$-semi-ample and
      $f$ is locally Moishezon.
      Let $D$ be an effective Weil divisor on $X$ for which there exists an
      integer $n > 0$ such that $nM$ is Cartier and an
      effective Weil divisor $D'$ on $X$ such that $\cO_X(D+D')
      \simeq \cO_X(nM)$.
      Then,
      the canonical morphisms
      \[
        R^if_*\bigl(\cO_X(N)\bigr) \longrightarrow
        R^if_*\bigl(\cO_X(N+D)\bigr)
      \]
      induced by the inclusion $\cO_X \hookrightarrow \cO_X(D)$ are injective
      for all $i$.
  \end{enumerate}
\end{customthm}
As far as we are aware, in arbitrary dimension,
Theorem \ref{thm:kvvanishingothercats} was previously
only known for varieties and for complex analytic spaces.
In low dimensions, Theorem \ref{thm:kvvanishingothercats} was also known
for non-Archimedean analytic spaces when $\dim(Y)=1$.
For complex analytic spaces, Theorem
\ref{thm:kvvanishingothercats}$(\ref{thm:kvvanishingothercatsvan})$ gives an
alternative proof of Nakayama's version of the Kawamata--Viehweg vanishing
theorem \cite[Theorem 3.7]{Nak87}, and Theorem
\ref{thm:kvvanishingothercats}$(\ref{thm:kvvanishingothercatsinj})$ recovers
the special case of Nakayama's version of Koll\'ar's injectivity theorem
\cite[Theorem 3.10(B)]{Nak87} when $f$ is locally Moishezon.
For non-Archimedean analytic spaces, the case when $\dim(Y) =
1$ follows from the work of Boucksom--Favre--Jonsson \cite[Theorem B.3]{BFJ16} and
Musta\c{t}\u{a}--Nicaise \cite[Theorem 5.2.3, Remark 5.3, Theorem 5.3.1, and
Remark 5.4]{MN15}.
For formal schemes, Smith showed a special
case of Kodaira vanishing \cite[Theorem 4.2.1]{Smi17} and showed that in
general, Kodaira vanishing is false for formal schemes that are smooth and
\emph{pseudo-}projective over the complex numbers \cite[Proposition
4.3.1]{Smi17}.\medskip
\par In \S\ref{sect:rationalsingularities}, we use our vanishing results
to study rational singularities.
First, we show the following version of Boutot's
theorem \cite[Th\'eor\`eme on p.\ 65]{Bou87}, which solves a conjecture of Boutot
\cite[Remarque 1 on p.\ 67]{Bou87} and gives a complete, positive answer to
a question of Schoutens \cite[(2) on p.\ 611]{Sch08}.
This result generalizes a result of Schoutens \cite[Main
Theorem A]{Sch08}, who showed that if $R'$ is regular in the statement below,
then $R$ is locally pseudo-rational.
For the statement below, a ring map $R \to R'$ is \textsl{cyclically pure} if
$IR' \cap R = I$ for every ideal $I \subseteq R$ \cite[p.\ 463]{Hoc77}.
Split or faithfully flat ring maps are cyclically pure \cite[p.\
136]{HR74}.
\begin{customthm}{C}\label{thm:boutot}
  Let $R \to R'$ be a cyclically pure map of Noetherian
  $\QQ$-algebras.
  If $R'$ is locally pseudo-rational, then $R$
  is locally pseudo-rational.
  In particular, if $R'$ is regular, then $R$ is locally pseudo-rational.
\end{customthm}
The proof of Theorem \ref{thm:boutot}
uses techniques from our recent joint work with Charles Godfrey \cite{GM},
where we showed that Du Bois singularities descend under cyclically pure
maps for rings essentially of finite type over the complex numbers.
One key aspect of the proof of Theorem \ref{thm:boutot}
is that our vanishing theorem for Zariski--Riemann
spaces (Theorem \ref{thm:zrmaindualvanishing}) can be interpreted as an
injectivity theorem for the derived pushforward
$\RR\pi_*\cO_{\ZR(X)}$ of the structure sheaf of the Zariski--Riemann space
analogous to Kov\'acs and Schwede's injectivity
theorem for the $0$-th graded piece $\underline{\Omega}^0_X$ of the Deligne--Du
Bois complex \cite[Theorem 3.3]{KS16}.
\par We also prove the following generalizations of results on rational
singularities which were
previously known for rings essentially of finite type over a field of
characteristic zero:
\begin{enumerate}
  \item Pseudo-rationality deforms in equal characteristic zero (Theorem
    \ref{thm:elkik}).
    This extends a result of Elkik \cite[Th\'eor\`eme 5]{Elk78} (for
    rings essentially of finite type over a field of characteristic zero) and the
    author \cite[Proposition 4.17]{Mur} (for quasi-excellent local
    $\QQ$-algebras).
    Note that \cite[Proposition 4.17]{Mur} relies on Theorem
    \ref{thm:mainvanishing} of this paper.
  \item Derived splinters and rational singularities coincide for
    quasi-excellent schemes of equal characteristic zero (Theorem \ref{thm:derivedsplinters}).
    This extends a theorem of Kov\'acs and Bhatt for schemes of finite type over a field of
    characteristic zero \citeleft\citen{Kov00}\citemid Theorem 3\citepunct
    \citen{Bha12}\citemid Theorem 2.12\citeright.
  \item A criterion for Cohen--Macaulayness of Rees algebras over
    quasi-excellent local $\QQ$-algebras with
    rational singularities (Theorem \ref{thm:sdslip}).
    This extends theorems of Sancho de Salas \cite[Theorem 1.7]{SdS87} and
    Lipman \cite[Theorems 4.1 and 4.3]{Lip94} for rings essentially of finite
    type over a field of characteristic zero.
  \item A Brian\c{c}on--Skoda-type theorem for quasi-excellent $\QQ$-algebras with
    rational singularities (Corollary \ref{cor:bsrational}).
    This extends a result of Huneke \cite[Corollary 4.8]{Hun00} for rings
    essentially of finite type over a field of characteristic zero, and provides
    a stronger bound for integral closures of ideals compared to the results in
    \cite{LT81} which apply in the more general context of pseudo-rational rings
    of arbitrary characteristic.
\end{enumerate}
\par We mention that by \cite[Theorem 3.1]{BK},
Theorem \ref{thm:kvvanishing} also implies that excellent dlt pairs of equal
characteristic zero satisfy all local rationality properties known to hold for varieties in
characteristic zero (see Theorem \ref{thm:bk31}).\medskip
\par Additionally, we give an application of our results that is not related to
rational singularities.
\begin{enumerate}[resume]
  \item An adaptation of Hartshorne and Ogus's proof \cite[Corollary 2.6]{HO74}
    that complete local UFD's
    $(R,\fm)$ of dimension $\le 4$ with $R/\fm \simeq \CC$ are Gorenstein that
    removes the algebraizablility condition in \cite{HO74}.
    The result itself is not new to this paper since Raynaud
    (unpublished), Danilov \cite[Theorem 2]{Dan70}, and Boutot \cite[Corollaire
    on p.\ 693]{Bou73} had given different proofs of the steps in \cite{HO74}
    that require algebraizability assumptions.
    Nevertheless, Hartshorne and Ogus's strategy yields an interesting
    proof of this result using analytic methods, which was limited to the
    algebraizable case prior to this paper.
    See Theorem \ref{thm:ho}.
\end{enumerate}
\par We mention that Theorem \ref{thm:kvvanishing} also implies that the theory of
multiplier ideals as developed in \cite[Part Three]{Laz04b}
is now available for all excellent $\QQ$-algebras with
dualizing complexes.
In \cite[\S4.2]{Mursymb}, we used this consequence of Theorem
\ref{thm:kvvanishing} to give a multiplier ideal-theoretic proof of the
uniform comparison between symbolic and ordinary powers of ideals in
regular $\QQ$-algebras due to Ein--Lazarsfeld--Smith \cite[Theorem 2.2 and
Variant on p.\ 251]{ELS01} (for smooth $\CC$-algebras) and Hochster--Huneke
\cite[Theorem 4.4$(a)$]{HH02} (for regular rings of equal characteristic) that
does not rely on reduction modulo $p$ or N\'eron-type desingularization
theorems.
\subsection*{Notation}
All rings are commutative with identity, and all ring maps are unital.
If $k$ is a field, then a \textsl{variety over $k$} is an integral scheme that is
separated and of finite type over $k$.
\par Intersection products on schemes that are proper over a field are defined
using Euler characteristics as in \citeleft\citen{Kle66}\citemid Chapter
I\citepunct \citen{Kle05}\citemid Appendix B\citeright.
Weil and Cartier divisors are defined as in \cite[(21.6.2)]{EGAIV4} and
\cite[D\'efinition 21.1.2]{EGAIV4}, respectively.
The two notions coincide on locally Noetherian schemes that are locally
factorial \cite[Th\'eor\`eme 21.6.9$(ii)$]{EGAIV4}.
A \textsl{$\QQ$-Weil divisor} (resp.\ \textsl{$\QQ$-Cartier divisor}) is a
formal $\QQ$-linear combination of Weil divisors (resp.\ Cartier divisors).
\par We also use the following terminology.
A point $x$ in a topological space $X$ is \textsl{maximal} if it
is the generic point of an irreducible component of $X$ \cite[Chapitre 0,
(2.1.1)]{EGAInew}.
A morphism $X \to Y$ of schemes is \textsl{maximally dominating} if every
maximal point of $X$ maps to a maximal point of $Y$ \cite[Expos\'e II,
D\'efinition 1.1.2]{ILO14}.
\subsection*{Acknowledgments}
We would like to thank
Donu Arapura,
Bhargav Bhatt,
Rankeya Datta,
Yajnaseni Dutta,
Charles Godfrey,
Mattias Jonsson,
J\'anos Koll\'ar,
Linquan Ma,
Mircea Musta\c{t}\u{a},
Yusuke Nakamura,
Giovan Battista Pignatti Morano di Custoza,
Hans Schoutens,
Karl Schwede,
Kazuma Shimomoto,
Chris Skalit,
and
Farrah Yhee
for helpful conversations.
We are grateful to Rankeya Datta for allowing us to include the proof of Theorem
\ref{thm:derivedsplinters}, which arose out of discussions with him, and to
Shiji Lyu for pointing out that Theorems
\ref{thm:kvvanishing} and \ref{thm:mainvanishing} hold without excellence
assumptions.
Finally, we thank the anonymous referee for their comments and suggestions.

\section{Preliminaries}\label{sect:prelim}
\subsection{Excellence and quasi-excellence}
We begin with the notion of excellence.
Grothendieck and Dieudonn\'e conjectured that resolutions of singularities exist
for all quasi-excellent schemes \cite[Remarque 7.9.6]{EGAIV2}.
Temkin proved their conjecture for schemes of equal characteristic zero
\cite[Theorem 1.1]{Tem08}.
\begin{definition}[{\citeleft\citen{EGAIV2}\citemid D\'efinition 7.8.2 and
  (7.8.5)\citepunct \citen{Mat80}\citemid Definition 34.A\citeright}]
  Let $R$ be a ring.
  We say that $R$ is \textsl{excellent} (resp.\ \textsl{quasi-excellent}) if the
  following conditions (resp.\ conditions $(\ref{def:excellentnoeth})$,
  $(\ref{def:excellentgring})$, and $(\ref{def:excellentj2})$ below) are
  satisfied.
  \begin{enumerate}[label=$(\roman*)$,ref=\roman*]
    \item\label{def:excellentnoeth} $R$ is Noetherian.
    \item\label{def:excellentuc} $R$ is universally catenary.
    \item\label{def:excellentgring}
      $R$ is a \textsl{G-ring}, i.e., for every prime ideal $\fp \subseteq R$,
      the $\fp$-adic completion map $R_\fp \to \hat{R}_\fp$ has
      geometrically regular fibers.
    \item\label{def:excellentj2}
      $R$ is \textsl{J-2}, i.e., for every $R$-algebra $S$ of finite type, the
      regular locus in $\Spec(S)$ is open.
  \end{enumerate}
  A locally Noetherian scheme $X$ is \textsl{excellent} (resp.\
  \textsl{quasi-excellent}) if it admits an open affine covering $X = \bigcup_i
  \Spec(R_i)$ such that every $R_i$ is excellent (resp.\ quasi-excellent).
  \par A locally Noetherian scheme $X$ or a ring $R$ is \textsl{locally
  excellent} (resp.\ \textsl{locally quasi-excellent}) if all of its local rings
  are excellent (resp.\ quasi-excellent).
\end{definition}
Noetherian complete local rings are excellent, and the classes of excellent and
quasi-excellent
schemes are stable under morphisms locally essentially of finite type
\citeleft\citen{EGAIV2}\citemid Scholie 7.8.3\citepunct \citen{Mat80}\citemid
(34.A)\citeright.
\begin{remark}
  While excellence (resp.\ quasi-excellence) implies local excellence
  (resp.\ local quasi-excellence), the converse is false in general
  \cite[Example 1]{Hoc73}.
  In fact, by \cite[(34.A)]{Mat80}, a Noetherian ring is locally excellent
  (resp.\ locally quasi-excellent) if and only if it is a universally catenary
  G-ring (resp.\ it is a G-ring).
\end{remark}
\subsection{Dualizing complexes}
We will also need the notion of a dualizing complex.
\begin{definition}[{\citeleft\citen{Har66}\citemid Chapter V, Definition on p.\
  258\citepunct \citen{Con00}\citemid p.\ 118\citeright}]
  Let $X$ be a locally Noetherian scheme.
  A \textsl{dualizing complex} on $X$ is a complex $\omega_X^\bullet$ in
  $D^b_{\textup{coh}}(X)$ that has finite injective dimension such that the
  natural morphism
  \[
    \id_{(-)} \longrightarrow
    \RRHHom_{\cO_X}\bigl(\RRHHom_{\cO_X}(-,\omega_X^\bullet),
    \omega_X^\bullet\bigr)
  \]
  of $\delta$-functors on $D^b_{\textup{coh}}(X)$ is an isomorphism.
\end{definition}
\begin{remark}\label{rem:whendualizingexist}
  We will cite results about dualizing complexes and Grothendieck duality
  as we use them.
  To ensure the existence of dualizing complexes, we recall
  the following (see \cite[p.\ 299]{Har66}):
  \begin{enumerate}[label=$(\roman*)$,ref=\roman*]
    \item\label{rem:whendualizingexistft}
      If $f\colon X \to Y$ is a morphism of finite type between Noetherian
      schemes and $\omega_Y^\bullet$ is a dualizing complex for $Y$, then the
      exceptional pullback $f^!\omega_Y^\bullet$ is a dualizing complex for $X$.
    \item\label{rem:whendualizingexistcomplete}
      If $X$ is regular (or more generally, Gorenstein) of finite Krull
      dimension, then $\cO_X$ is a dualizing complex for $X$.
      Thus, combining $(\ref{rem:whendualizingexistft})$ with the Cohen
      structure theorem \cite[Theorem 29.4$(ii)$]{Mat89},
      spectra of Noetherian complete local rings have dualizing
      complexes.
  \end{enumerate}
  We also note that locally Noetherian schemes with dualizing complexes have
  finite Krull dimension and are universally catenary \cite[(1) and (2) on p.\
  300]{Har66}.
\end{remark}
\subsection{Relative ampleness conditions}\label{sect:relativeampleness}
We define relative ampleness conditions for invertible sheaves and
$\QQ$-Cartier divisors.
While most of these definitions exist in the literature, the definition of
 $f$-big is more general than what is usually used.
We have made this definition to facilitate our limit arguments in
\S\ref{sect:mainapprox}.
\begin{definition}[(see {\citeleft\citen{EGAII}\citemid D\'efinitions 4.4.2 and
  4.6.1\citepunct \citen{KMM87}\citemid Definitions 0-1-4, 0-3-2, and 0-1-1\citepunct
  \citen{Fuj17}\citemid \S\S2.1--2.2\citeright})]
  \label{def:fpositive}
  Let $f\colon X \to Y$ be a morphism of schemes and let $\sL$ be an invertible
  sheaf on $X$.
  \begin{enumerate}[label=$(\roman*)$,ref=\roman*]
    \item We say that $\sL$ is \textsl{$f$-very ample} if there exists a
      quasi-coherent $\cO_Y$-module $\sE$ and an immersion $i\colon X
      \hookrightarrow \PP(\sE)$ over $Y$ such that $\sL \simeq
      i^*(\cO_{\PP(\sE)}(1))$.
    \item Suppose $f$ is quasi-compact.
      We say that $\sL$ is \textsl{$f$-ample} if 
      there exists an open affine cover $Y = \bigcup_i U_i$ such that
      $\sL\rvert_{f^{-1}(U_i)}$ is ample for all $i$.
    \item We say that $\sL$ is \textsl{$f$-generated} if the
      adjunction morphism $f^*f_*\sL \to \sL$ is surjective.
      We say that $\sL$ is \textsl{$f$-semi-ample} if there exists an integer $n
      > 0$ such that $\sL^{\otimes n}$ is $f$-generated.
    \item\label{def:fbig} Suppose that $f$ is a proper maximally dominating morphism.
      We say that $\sL$ is \textsl{$f$-big} if there exists an integer $n > 0$
      such that for every maximal point $\eta \in Y$, the pullback
      $\sL^{\otimes n}_\eta$ of
      $\sL^{\otimes n}$ to the fiber $X_\eta$
      induces a rational map
      \[
        \begin{tikzcd}[column sep=5.75em]
          \phi_{\lvert \sL_\eta^{\otimes n} \rvert}\colon
          X_\eta \rar[dashed]{\lvert H^0(X_\eta,\sL^{\otimes n}_\eta) \rvert} &
          \PP\bigl( H^0(X_\eta,\sL^{\otimes n}_\eta) \bigr)
        \end{tikzcd}
      \]
      that is generically finite onto its image in the sense of \cite[Expos\'e
      II, Proposition 1.1.7]{ILO14}.
    \item Suppose that $f$ is a proper morphism.
      We say that $\sL$ is \textsl{$f$-nef} if $\sL\rvert_{f^{-1}(y)}$ is nef
      for every $y \in Y$, i.e., if for every one-dimensional integral closed
      subscheme $C \subseteq f^{-1}(y)$, we have
      \[
        \bigl(\sL\rvert_{f^{-1}(y)} \cdot C\bigr) \ge 0.
      \]
  \end{enumerate}
  We can extend these definitions to Cartier divisors $L$ on $X$ by asking
  that their associated invertible sheaves $\cO_X(L)$ satisfy these conditions.
  If $D$ is a $\QQ$-Cartier divisor, then we say that $D$ is
  \textsl{$f$-ample} (resp.\ \textsl{$f$-semi-ample}, \textsl{$f$-big},
  \textsl{$f$-nef}) if some positive integer multiple of $D$ satisfies this
  property.
\end{definition}
\begin{remark}
  When $X$ is integral,
  the definition for $f$-big in Definition \ref{def:fpositive}$(\ref{def:fbig})$
  is equivalent to saying that the volume of $\sL_\eta$ is positive for every
  maximal point $\eta \in Y$ by \cite[Theorems 8.2 and 10.7]{Cut14}.
\end{remark}
\subsection{Rational singularities and pseudo-rational rings}
We adopt the following definition for rational singularities.
\begin{definition}[(cf.\ {\citeleft\citen{KKMSD73}\citemid pp.\ 50--51\citepunct
  \citen{Kol13}\citemid Definition 2.76\citeright})]\label{def:ourratsings}
  Let $(R,\fm)$ be a quasi-excellent local $\QQ$-algebra.
  We say that $R$ has \textsl{rational singularities} if $R$ is normal and if
  for every proper birational morphism $f\colon X \to \Spec(R)$ such that $X$ is
  regular, we have $R^if_*\cO_X = 0$ for all $i > 0$.
  \par Now let $Y$ be a normal locally quasi-excellent locally Noetherian
  scheme of equal characteristic zero.
  We say that $Y$ has \textsl{rational singularities} if every local ring
  $\cO_{Y,y}$ has rational singularities.
  If $R$ is a locally quasi-excellent Noetherian ring, we say that $R$ has
  \textsl{rational singularities} if $\Spec(R)$ does.
\end{definition}
\par The condition in Definition \ref{def:ourratsings} is not vacuous since
resolutions of singularities exist for quasi-excellent local $\QQ$-algebras by
\cite[Chapter I, \S3, Main Theorem I$(n)$]{Hir64}.
We check that this definition localizes.
\begin{lemma}\label{lem:ourratsingslocal}
  Let $(R,\fm)$ be a quasi-excellent local $\QQ$-algebra with rational
  singularities.
  Then, $R_\fp$ has rational singularities for every prime ideal $\fp \subseteq
  R$, and hence $\Spec(R)$ has rational singularities.
\end{lemma}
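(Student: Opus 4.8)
The plan is to check Definition~\ref{def:ourratsings} for $R_\fp$ directly, testing the required higher-direct-image vanishing on a single resolution of $\Spec(R_\fp)$ produced by base change from a resolution of $\Spec(R)$. One first records that $R_\fp$ is again a normal quasi-excellent local $\QQ$-algebra (localizations of normal rings are normal, and localizations of quasi-excellent rings are quasi-excellent, being essentially of finite type over $R$), so by \cite[Ch.\ I, \S3, Main Thm.\ I$(n)$]{Hir64} resolutions of $\Spec(R_\fp)$ exist, and it suffices to fix a proper birational morphism $g\colon X'\to\Spec(R_\fp)$ with $X'$ regular and to prove $R^ig_*\cO_{X'}=0$ for all $i>0$.

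The key reduction is that it is enough to verify this vanishing for one resolution of $\Spec(R_\fp)$. Indeed, given two proper birational morphisms $g_1\colon X_1\to\Spec(R_\fp)$ and $g_2\colon X_2\to\Spec(R_\fp)$ from integral regular schemes --- both of function field $\mathrm{Frac}(R_\fp)=\mathrm{Frac}(R)$ --- the reduced closure $W$ of the generic point of $X_1\times_{\Spec(R_\fp)}X_2$ carries projective birational morphisms onto $X_1$ and $X_2$ that agree after composing with $g_1$ and $g_2$. Applying \cite[Ch.\ I, \S3, Main Thm.\ I$(n)$]{Hir64} to $W$ (which is of finite type over the quasi-excellent $\QQ$-algebra $R_\fp$) gives a regular scheme $\widehat W$ with projective birational morphisms $p_j\colon\widehat W\to X_j$. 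Since the $X_j$ are regular, $R^ip_{j*}\cO_{\widehat W}=0$ for $i>0$ by \cite[Thm.\ 1.1]{CR15}, and $p_{j*}\cO_{\widehat W}=\cO_{X_j}$ by normality \cite[Cor.\ 4.3.12]{EGAIII1}; hence $\mathbf Rp_{j*}\cO_{\widehat W}\simeq\cO_{X_j}$ and $\mathbf R(g_j\circ p_j)_*\cO_{\widehat W}\simeq\mathbf Rg_{j*}\cO_{X_j}$, so (as $g_1\circ p_1=g_2\circ p_2$) the two vanishing statements are equivalent.

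It then remains to exhibit one resolution of $\Spec(R_\fp)$ with the vanishing. Choose a resolution $\widetilde g\colon\widetilde X\to\Spec(R)$ by \cite[Ch.\ I, \S3, Main Thm.\ I$(n)$]{Hir64}; since $R$ has rational singularities, $R^i\widetilde g_*\cO_{\widetilde X}=0$ for all $i>0$. Base-changing along the flat morphism $\Spec(R_\fp)\to\Spec(R)$ yields $\widetilde g_\fp\colon\widetilde X_\fp\to\Spec(R_\fp)$, which is proper, is birational (it is an isomorphism over a dense open of $\Spec(R_\fp)$), and has $\widetilde X_\fp$ regular: $\widetilde X_\fp$ is covered by spectra of localizations $A\otimes_R R_\fp$ of the regular coordinate rings $A$ in an affine cover of $\widetilde X$, and localizations of regular rings are regular. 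By flat base change $R^i\widetilde g_{\fp*}\cO_{\widetilde X_\fp}=0$ for $i>0$, so $R^ig_*\cO_{X'}=0$ for $i>0$ by the previous paragraph. Hence $R_\fp$ has rational singularities for every prime $\fp$, and the concluding assertion follows at once from Definition~\ref{def:ourratsings} since $\cO_{\Spec(R),\fp}=R_\fp$.

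The genuinely delicate step is the comparison of resolutions in the middle paragraph: it is what lets us sidestep the fact that $g$ need not extend to a morphism over $\Spec(R)$, and it relies on relative vanishing for projective birational morphisms of regular $\QQ$-schemes \cite[Thm.\ 1.1]{CR15}, the same ingredient behind Theorem~\ref{thm:relativevanishingzr}. Alternatively one could spread $g$ out to a proper birational morphism over an open $D(f)\ni\fp$ of $\Spec(R)$, shrink $D(f)$ using that $R$ is $\mathrm J$-$2$ together with a comparison of local rings until the source becomes regular, and then compactify and resolve to extend to a resolution of all of $\Spec(R)$; this works too but is more cumbersome.
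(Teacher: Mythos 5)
Your proof is correct, but it takes a genuinely different route from the paper's. The paper fixes an arbitrary proper birational $f_\fp\colon X_\fp\to\Spec(R_\fp)$ with $X_\fp$ regular and then works hard to \emph{extend} it: it invokes Nayak's compactification theorem for essentially-finite-type maps \cite[Thm.\ 4.1]{Nay09} to produce a proper $g\colon X\to\Spec(R)$ whose base change to $\Spec(R_\fp)$ is $f_\fp$, takes the scheme-theoretic closure $\overline{X_\fp}$ of $X_\fp$ in $X$, and then resolves $X$ by a $\pi_1\colon\widetilde X\to X$ chosen to be an isomorphism along the image of $X_\fp$ in $\overline{X_\fp}$, so that the resulting resolution $f\colon\widetilde X\to\Spec(R)$ of $\Spec(R)$ base-changes back to $f_\fp$. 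The vanishing then follows from the hypothesis on $R$ and flat base change. You invert the logic: you first establish that the definition is independent of the chosen resolution (by dominating two resolutions with a common one $\widehat W$ and using \cite[Thm.\ 1.1]{CR15} together with $p_{j*}\cO_{\widehat W}=\cO_{X_j}$ to identify $\mathbf{R}g_{1*}\cO_{X_1}\simeq\mathbf{R}g_{2*}\cO_{X_2}$), and then simply restrict \emph{one} resolution of $\Spec(R)$ to $\Spec(R_\fp)$. Your independence lemma is a standard and useful fact (closely related to the paper's Proposition \ref{prop:kempf}), and makes the construction of the specific resolution much easier; the cost is that you invoke the Chatzistamatiou--R\"ulling vanishing here, which the paper deliberately avoids in this particular lemma (though it uses it elsewhere, in Theorem \ref{thm:relativevanishingzr}). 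Two minor points worth tightening: the projections $W\to X_j$ are proper but not a priori projective since Definition \ref{def:ourratsings} only assumes the $g_j$ proper, so you should either dominate by a blowup via \cite[Pt.\ I, Cor.\ 5.7.12]{RG71} or note that \cite[Thm.\ 1.1]{CR15} applies to proper birational morphisms between regular excellent schemes; and the excellence hypothesis of \cite[Thm.\ 1.1]{CR15} is met because a regular quasi-excellent noetherian scheme is universally catenary, hence excellent.
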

\begin{proof}
  Since normality is a local condition, it suffices to
  consider the condition on proper birational morphisms.
  \par
  Let $f_\fp\colon X_\fp \to \Spec(R_\fp)$ be a proper birational morphism from a
  regular scheme $X_\fp$.
  We can then find a Cartesian diagram
  \[
    \begin{tikzcd}
      X_\fp \rar\dar[swap]{f_\fp} & X\dar{g}\\
      \Spec(R_\fp) \rar & \Spec(R)
    \end{tikzcd}
  \]
  where the horizontal morphisms are localizing immersions in the sense of
  \cite[Definition 2.7]{Nay09} and $g$ is proper by
  Nayak's version of Nagata compactification \cite[Theorem 4.1]{Nay09}.
  Set $\overline{X_\fp}$ to be the scheme-theoretic closure of $X_\fp$ in $X$.
  Since $X_\fp \hookrightarrow X$ is quasi-compact, the underlying set of
  $\overline{X_\fp}$ is equal to the set-theoretic closure of $X_\fp$ in $X$ by
  \cite[Corollaire 6.10.6(1)]{EGAInew}.
  The morphism $\overline{X_\fp} \to \Spec(R)$ is therefore birational.
  \par We now let $\pi_1\colon \tilde{X} \to X$ be a resolution of
  singularities that is an isomorphism along the set-theoretic image of $X_\fp$
  in $\overline{X_\fp}$, which exists by
  \cite[Chapter I, \S3, Main Theorem I$(n)$]{Hir64}.
  We then obtain the proper birational morphism
  \[
    f \coloneqq (g \circ \pi_1) \colon \tilde{X} \longrightarrow \Spec(R)
  \]
  from the regular scheme $\tilde{X}$ whose base change
  to $\Spec(R_\fp)$ is the morphism $f_\fp$.
  By assumption, we have $R^if_*\cO_{\tilde{X}} = 0$
  for all $i > 0$, and hence $R^if_{\fp*}\cO_{X_\fp} = 0$ for all $i > 0$ by
  flat base change.
\end{proof}
Even without assuming the existence of resolutions of singularities, we can make
the following definition:
\begin{definition}[{\cite[\S2]{LT81}}]\label{def:pseudorational}
  Let $(R,\fm)$ be a Noetherian local ring of dimension $d$.
  We say that $R$ is \textsl{pseudo-rational} if the following conditions are
  satisfied.
  \begin{enumerate}[label=$(\roman*)$,ref=\roman*]
    \item $R$ is normal.
    \item $R$ is Cohen--Macaulay.
    \item $R$ is \textsl{analytically unramified}, i.e., the $\fm$-adic completion
      $\hat{R}$ of $R$ is reduced.
    \item\label{def:pseudoratbiratcond}
      For every proper birational morphism $f\colon W \to \Spec(R)$
      with $W$ normal, denoting the closed fiber by $E = f^{-1}(\{\fm\})$,
      the natural map
      \[
        H^d_\fm(R) = H^d_{\{\fm\}}\bigl(\Spec(R),f_*\cO_W\bigr)
        \xrightarrow{\delta^d_f(\cO_W)}
        H^d_E(W,\cO_W)
      \]
      appearing as the edge map in the Leray--Serre spectral sequence for the
      composition of functors $\Gamma_{\{\fm\}} \circ f_* = \Gamma_E$
      is injective.
  \end{enumerate}
  If $X$ is a locally Noetherian scheme or $R$ is a Noetherian ring, we say that
  $X$ is \textsl{locally pseudo-rational} if all its local rings are
  pseudo-rational.
\end{definition}

\section{Limits and local cohomology}\label{sect:limits}
In this section, we review some preliminaries on 
limits of ringed spaces and schemes and their sheaf cohomology.
The main new result is that local cohomology is well-behaved under limits of
ringed spaces (Theorem \ref{lem:lccolimits}).
We set our conventions for limits of spaces in \S\ref{sect:limitsrssch} and
define Zariski--Riemann spaces as an example of such a limit
in \S\ref{sect:zariskiriemann}.
The behavior of sheaf cohomology under limits of ringed spaces is
reviewed in \S\ref{sect:cohlimits} and we deduce the analogous result for local
cohomology as a consequence in \S\ref{sect:lclimits}.
\subsection{Limits of ringed spaces and schemes}\label{sect:limitsrssch}
We fix our notation for limits of ringed spaces mostly following Fujiwara and
Kato \cite[Chapter 0, \S4]{FK18}, which in turn draws
on the topos-theoretic formulation of this material in \cite[Expos\'es VI and
VII]{SGA42}.
In the scheme-theoretic context, some of this material appears in
\cite[\S8]{EGAIV3}.
\begin{setup}[(see {\cite[Chapter 0, \S$4.1.(e)$ and \S$4.2.(a)$]{FK18}})]
  \label{setup:limitlrs}
  Let $\{(X_\lambda,v_{\lambda\mu})\}_{\lambda \in \Lambda}$
  be an inverse system of ringed spaces (resp.\ locally ringed spaces)
  indexed by a filtered preordered set $\Lambda$.
  By \cite[Chapter 0, Proposition 4.1.10]{FK18}, the limit
  \[
    X = \varprojlim_{\lambda \in \Lambda} X_\lambda
  \]
  exists in the category of ringed spaces (resp.\ locally ringed spaces), and
  is preserved under the forgetful functor from the category of locally ringed
  spaces to the category of ringed spaces.
  The underlying topological space of $X$ is
  the limit of the underlying topological spaces of the $X_\lambda$'s.
  The structure sheaf $\cO_X$ on $X$ can be described as follows.
  Denote by $v_{\lambda\mu}\colon X_\mu \to X_\lambda$ the transition morphisms
  in our inverse system.
  Note that such a transition morphism $v_{\lambda\mu}$
  yields a pullback map
  \[
    v_{\lambda\mu}^{-1}\cO_{X_\lambda} \longrightarrow \cO_{X_\mu}
  \]
  on structure sheaves, where $v_{\lambda\mu}^{-1}$ is the pullback
  as Abelian sheaves.
  Pulling back these maps to $X$ along the canonical projection morphisms
  $v_\lambda\colon X \to X_\lambda$, we obtain a direct
  system of Abelian sheaves $\{v_\lambda^{-1}\cO_{X_\lambda}\}_{\lambda \in
  \Lambda}$ on $X$.
  The structure sheaf on $X$ can then be described as the colimit
  \begin{align*}
    \cO_X &= \varinjlim_{\lambda \in \Lambda}v_\lambda^{-1}
    \cO_{X_\lambda}
    \intertext{of this direct system of Abelian sheaves on $X$.
    Moreover, by \cite[Chapter 0, Lemma 4.2.7]{FK18}
    this sheaf can be described as a colimit of
    $\cO_X$-modules:}
    \cO_X &\simeq \varinjlim_{\lambda \in \Lambda}
    v_\lambda^* \cO_{X_\lambda}.
  \end{align*}
\end{setup}
\begin{setup}[(see {\cite[\S8.2]{EGAIV3}})]\label{setup:limitsch}
  With notation as in Setup \ref{setup:limitlrs}, suppose the inverse
  system $\{(X_\lambda,v_{\lambda\mu})\}_{\lambda \in \Lambda}$ lies in the
  category of schemes.
  If the transition morphisms $v_{\lambda\mu}\colon X_\mu \to X_\lambda$ are
  affine for all $\lambda \le \mu$, then the limit $X$ in the category of
  locally ringed spaces is a scheme by \cite[Proposition 8.2.3 and Remarque
  8.2.14]{EGAIV3}, and the projection morphisms $v_\lambda\colon X \to
  X_\lambda$ are affine for all $\lambda \in \Lambda$.
\end{setup}
\begin{remark}\label{rem:directedvsfiltered}
  Our setup is more general than that in \cite{FK18} since our
  index sets $\Lambda$ are only assumed to be filtered and preordered.
  However, given an inverse system (resp.\ direct system) indexed
  by such a filtered preordered set $\Lambda$, there always exists a directed
  set $\Lambda'$ and an inverse system (resp.\ direct system) indexed by
  $\Lambda'$ using the same objects and morphisms as the
  original system, together with an initial (resp.\ final) morphism
  between the two inverse systems (resp.\ direct systems) by \cite[Theorem
  1]{AN82}.
  Since this morphism of inverse systems (resp.\ direct systems) is initial
  (resp.\ final), they have the same limit (resp.\ colimit).
  We will therefore allow ourselves to index inverse systems and direct systems
  by filtered preordered sets, and will state results from \cite{FK18} in this
  generality.
\end{remark}
In order for cohomology to behave well with respect to limits, we need to make
some additional assumptions on our inverse systems
$\{(X_\lambda,v_{\lambda\mu})\}_{\lambda \in \Lambda}$.
\begin{assumptions}\label{assum:fornicelimits}
  With notation as in Setup \ref{setup:limitlrs}, we will assume the following
  conditions hold.
  \begin{enumerate}[label=$(\alph*)$,ref=\alph*]
    \item\label{assum:spectralspaces}
      For every $\lambda \in \Lambda$, the underlying topological space of
      $X_\lambda$ is spectral.
      Following \cite[p.\ 43]{Hoc69},
      a topological space $Y$ is \textsl{spectral} if it is $T_0$ and
      quasi-compact, the quasi-compact open subsets in $Y$ are stable under
      finite intersection and form an open basis, and every nonempty irreducible
      closed subset in $Y$ has a generic point.
    \item\label{assum:qctransition}
      For all $\lambda \le \mu$, the underlying continuous maps of
      $v_{\lambda\mu}\colon X_\mu \to X_\lambda$ are quasi-compact.
  \end{enumerate}
  By \cite[Theorem 7]{Hoc69} (see also \cite[Chapter 0, Theorem 2.2.10(1)]{FK18}),
  these assumptions together imply that the underlying topological space of $X$ is
  spectral, and that the underlying continuous maps of the projection morphisms
  $v_\lambda\colon X \to X_\lambda$ are quasi-compact.
\end{assumptions}
\begin{remark}\label{rem:qcqsspectral}
  If $X_\lambda$ is a scheme as in Setup \ref{setup:limitsch}, then
  the underlying topological space of $X_\lambda$ is spectral
  if and only if $X_\lambda$ is
  quasi-compact and quasi-separated by \cite[Propositions 2.1.5 and 6.1.12]{EGAInew}
  (see also \cite[Chapter 0, Example 2.2.2(2)]{FK18}).
  Thus, if the $X_\lambda$ are all Noetherian (or more generally, quasi-compact and
  quasi-separated) schemes, then both
  $(\ref{assum:spectralspaces})$ and $(\ref{assum:qctransition})$ hold by
  \cite[Corollaire 6.1.13 and Proposition 6.1.5$(v)$]{EGAInew}.
\end{remark}
\begin{remark}
  A topological space is spectral if and only if it is coherent and sober
  \cite[Chapter 0, Remark 2.2.4(1)]{FK18}.
  Quasi-compact maps of spectral spaces are called \textsl{spectral} in
  \cite[p.\ 43]{Hoc69}.
\end{remark}
\subsection{Zariski--Riemann spaces}\label{sect:zariskiriemann}
An important example of a limit of an inverse system of schemes is
the \textsl{Zariski--Riemann} space
defined by Zariski for varieties \citeleft\citen{Zar40}\citemid Definition
A.II.5\citepunct \citen{Zar44}\citemid \S2\citeright\ and by Nagata for
Noetherian separated schemes \cite[\S3]{Nag63}.
\begin{definition}[(see {\citeleft\citen{FK06}\citemid Definition 5.9\citepunct
  \citen{Tem10}\citemid \S3.2\citepunct
  \citen{FK18}\citemid Chapter II, Definitions E.2.2 and E.2.3\citeright})]\label{def:zr}
  Let $X$ be a quasi-compact quasi-separated scheme.
  Denote by $\AId_X$ the set of quasi-coherent ideal sheaves $\sI$ of finite
  type on $X$ such that $X
  - V(\sI)$ contains all maximal points of $X$.
  The \textsl{Zariski--Riemann space} of $X$ is the limit
  \[
    \ZR(X) \coloneqq \varprojlim_{\sI \in \AId_X} X_\sI
  \]
  over the inverse system of blowups $X_\sI \to X$ along $\sI \in \AId_X$ in the
  category of locally ringed spaces.
  By Remark \ref{rem:qcqsspectral} and \cite[Theorem 7]{Hoc69}, 
  the underlying topological space of $\ZR(X)$ is
  spectral, and the underlying continuous maps of the projection morphisms
  $\ZR(X) \to X_\sI$ are quasi-compact.
\end{definition}
We note that the formation of Zariski--Riemann spaces commutes with
base change by quasi-compact separated \'etale morphisms by
\cite[\href{https://stacks.math.columbia.edu/tag/087B}{Tag
087B}]{stacks-project}.
\begin{remark}
  In \cite[\S3.2]{Tem10}, Temkin defines the Zariski--Riemann space for integral
  schemes using all proper birational morphisms $X' \to X$.
  If $X$ is quasi-compact and quasi-separated, then 
  the limit over such an inverse system coincides with $\ZR(X)$ since all
  proper birational morphisms can be dominated by a blowup along an ideal sheaf
  in $\AId_X$ by \cite[Premi\`ere partie, Corollaire 5.7.12]{RG71} (see also
  \cite[Theorem 2.11]{Con07}).
  We have chosen our definition to ensure that our inverse system is indexed by
  a directed set instead of a directed category.
\end{remark}
\subsection{Sheaf cohomology on limits of ringed spaces}\label{sect:cohlimits}
We will need to understand the behavior of sheaf cohomology on limits of ringed
spaces.
To do so, we set our notation for sheaves on inverse systems of ringed spaces.
Again, much of this material also appears in \cite[Expos\'es VI and VII]{SGA42}
in the language of topos theory.
\begin{setup}[(see {\cite[Chapter 0, \S4.4]{FK18}})]\label{setup:limitschlc}
  With notation as in Setup \ref{setup:limitlrs},
  for every $\lambda \in \Lambda$, we also fix an $\cO_{X_\lambda}$-module
  $\sF_\lambda$,
  together with morphisms
  \[
    \varphi_{\lambda\mu}\colon v_{\lambda\mu}^*\sF_\lambda \longrightarrow
    \sF_\mu
  \]
  of $\cO_{X_\mu}$-modules
  for every $\lambda \le \mu$, such that $\varphi_{\lambda\nu} =
  \varphi_{\mu\nu} \circ v^*_{\mu\nu}\varphi_{\lambda\mu}$ whenever $\lambda \le
  \mu \le \nu$.
  We then have a direct system $\{v_\lambda^*\sF_\lambda\}_{\lambda \in
  \Lambda}$ of $\cO_X$-modules whose colimit is the $\cO_X$-module
  \begin{align}
    \sF &= \varinjlim_{\lambda \in \Lambda} v_\lambda^*\sF_\lambda.\nonumber
    \intertext{We have the following canonical isomorphism from \cite[Chapter 0,
    Proposition 4.2.7]{FK18}:}
    \sF &\simeq \varinjlim_{\lambda \in \Lambda}
    v_\lambda^{-1}\sF_\lambda.\label{eq:fk0427}
  \end{align}
\end{setup}
We also make the following definition, which will simplify the statements of
some of the results below.
\begin{definition}
  Let $A$ be a ring, and consider the ringed space $(\{*\},A)$ whose underlying
  topological space is a point and whose structure sheaf is the constant sheaf
  $A$.
  The \textsl{category of ringed spaces over $A$} is the slice category of ringed
  spaces over $(\{*\},A)$.
\end{definition}
With this notation, we have the following statements about the behavior of sheaf
cohomology under limits of ringed spaces, which are special cases of
\cite[Expos\'e VI, Th\'eor\`eme 8.7.3]{SGA42}.
The terminology ``the Grothendieck limit theorem'' is from \cite[p.\ 169]{Pan03}.
\begin{theorem}[(The Grothendieck limit theorem
  {\cite[Chapter 0, Proposition 4.4.1]{FK18}})]\label{thm:fklimitscoh}
  Let $A$ be a ring.
  Let $\{(X_\lambda,v_{\lambda\mu})\}_{\lambda \in \Lambda}$
  be an inverse system of spectral ringed spaces over $A$
  indexed by a filtered preordered
  set $\Lambda$ with quasi-compact transition morphisms, and
  let
  \[
    X \coloneqq \varprojlim_{\lambda\in\Lambda} X_\lambda
  \]
  be the inverse limit of this inverse system with canonical projection
  morphisms $v_\lambda\colon X \to X_\lambda$.
  \par For each $\lambda \in \Lambda$, fix an
  $\cO_{X_\lambda}$-module $\sF_\lambda$ on each
  $X_\lambda$, together with morphisms
  \[
    \varphi_{\lambda\mu}\colon v_{\lambda\mu}^*\sF_\lambda \longrightarrow
    \sF_\mu
  \]
  of $\cO_{X_\mu}$-modules for every $\lambda \le \mu$, such that $\varphi_{\lambda\nu} =
  \varphi_{\mu\nu} \circ v^*_{\mu\nu}\varphi_{\lambda\mu}$ whenever $\lambda \le
  \mu \le \nu$.
  Consider the direct system $\{v_\lambda^*\sF_\lambda\}_{\lambda \in
  \Lambda}$ of $\cO_X$-modules whose colimit is the $\cO_X$-module
  \[
    \sF \coloneqq \varinjlim_{\lambda\in\Lambda} v_\lambda^*\sF_\lambda.
  \]
  Then, the canonical map
  \[
    \varinjlim_{\lambda \in \Lambda} H^i(X_\lambda,\sF_\lambda)
    \longrightarrow H^i(X,\sF)
  \]
  is an isomorphism of $A$-modules for all $i \ge 0$.
\end{theorem}
\begin{theorem}[{\cite[Chapter 0, Corollary 4.4.4]{FK18}}]\label{thm:fklimitsdirectimage}
  Let $\{(X_\lambda,v_{\lambda\mu})\}_{\lambda \in \Lambda}$ and
  $\{(Y_\lambda,w_{\lambda\mu})\}_{\lambda \in \Lambda}$
  be inverse systems of spectral ringed spaces
  indexed by a filtered preordered
  set $\Lambda$ with quasi-compact transition morphisms, and
  let
  \[
    X \coloneqq \varprojlim_{\lambda\in\Lambda} X_\lambda
    \qquad \text{and} \qquad
    Y \coloneqq \varprojlim_{\lambda\in\Lambda} Y_\lambda
  \]
  be the inverse limits of these inverse systems with canonical projection
  morphisms $v_\lambda\colon X \to X_\lambda$ and $w_\lambda\colon Y \to
  Y_\lambda$, respectively.
  Consider a system of morphisms $\{f_\lambda\colon X_\lambda \to
  Y_\lambda\}_{\lambda \in \Lambda}$ such that the diagrams
  \[
    \begin{tikzcd}
      X_\mu \rar{f_\mu}\dar[swap]{v_{\lambda\mu}}
      & Y_\mu\dar{w_{\lambda\mu}}\\
      X_\lambda \rar{f_\lambda} & Y_\lambda
    \end{tikzcd}
  \]
  commute for all $\lambda \le \mu$, and set $f =
  \varprojlim_{\lambda \in \Lambda}f_\lambda\colon X \to Y$.
  \par For each $\lambda \in \Lambda$, fix an
  $\cO_{X_\lambda}$-module $\sF_\lambda$ on each
  $X_\lambda$, together with morphisms
  \[
    \varphi_{\lambda\mu}\colon v_{\lambda\mu}^*\sF_\lambda \longrightarrow
    \sF_\mu
  \]
  of $\cO_{X_\mu}$-modules for every $\lambda \le \mu$, such that $\varphi_{\lambda\nu} =
  \varphi_{\mu\nu} \circ v^*_{\mu\nu}\varphi_{\lambda\mu}$ whenever $\lambda \le
  \mu \le \nu$.
  Consider the direct system $\{v_\lambda^*\sF_\lambda\}_{\lambda \in
  \Lambda}$ of $\cO_X$-modules whose colimit is the $\cO_X$-module
  \[
    \sF \coloneqq \varinjlim_{\lambda\in\Lambda} v_\lambda^*\sF_\lambda.
  \]
  Then, the canonical morphism
  \[
    \varinjlim_{\lambda \in \Lambda} w_\lambda^{-1}R^if_{\lambda*}(\sF_\lambda)
    \longrightarrow R^if_*(\sF)
  \]
  is an isomorphism of $\cO_Y$-modules for all $i \ge 0$.
\end{theorem}
\subsection{Local cohomology on limits of ringed spaces}\label{sect:lclimits}
We now show that local cohomology is well-behaved under limits.
See \cite[Expos\'e VI, Corollaire 5.5]{SGA42} and \cite[Lemma 5.16]{HO08} for related
results.
\begin{theorem}\label{lem:lccolimits}
  Let $A$ be a ring.
  Let $\{(X_\lambda,v_{\lambda\mu})\}_{\lambda \in \Lambda}$
  be an inverse system of spectral ringed spaces over $A$
  indexed by a filtered preordered
  set $\Lambda$ with quasi-compact transition morphisms, and
  let
  \[
    X \coloneqq \varprojlim_{\lambda\in\Lambda} X_\lambda
  \]
  be the inverse limit of this inverse system with canonical projection
  morphisms $v_\lambda\colon X \to X_\lambda$.
  \par For each $\lambda \in \Lambda$, fix an
  $\cO_{X_\lambda}$-module $\sF_\lambda$ on each
  $X_\lambda$, together with morphisms
  \[
    \varphi_{\lambda\mu}\colon v_{\lambda\mu}^*\sF_\lambda \longrightarrow
    \sF_\mu
  \]
  of $\cO_{X_\mu}$-modules for every $\lambda \le \mu$, such that $\varphi_{\lambda\nu} =
  \varphi_{\mu\nu} \circ v^*_{\mu\nu}\varphi_{\lambda\mu}$ whenever $\lambda \le
  \mu \le \nu$.
  Consider the direct system $\{v_\lambda^*\sF_\lambda\}_{\lambda \in
  \Lambda}$ of $\cO_X$-modules whose colimit is the $\cO_X$-module
  \[
    \sF \coloneqq \varinjlim_{\lambda\in\Lambda} v_\lambda^*\sF_\lambda.
  \]
  \par Fix $\alpha \in \Lambda$, a quasi-compact open subset $U_\alpha \subseteq
  X_\alpha$, and a closed subset $Z_\alpha \subseteq U_\alpha$.
  For each $\lambda \ge \alpha$, set $U_\lambda =
  v_{\alpha\lambda}^{-1}(U_\alpha)$ and $Z_\lambda =
  X_\lambda - U_\lambda$.
  Then, the canonical map 
  \[
    \varinjlim_{\lambda \ge \alpha} H^i_{Z_\lambda}(X_\lambda,\sF_\lambda)
    \longrightarrow H^i_Z(X,\sF)
  \]
  is an isomorphism of $A$-modules for all $i \ge 0$.
\end{theorem}
\begin{proof}
  By Excision \cite[Proposition 1.3]{Gro67}, we may replace $X$ by $U$ to assume that
  $Z$ is closed in $X$.
  Consider $\lambda,\mu \in \Lambda$ such that $\lambda \le \mu$.
  By \cite[Functoriality II.9.7]{Ive86} applied to the maps $v_\lambda\colon
  X \to X_\lambda$, $v_\mu\colon X \to X_\mu$, and $v_{\lambda\mu}\colon X_\mu
  \to X_\lambda$, there is a commutative diagram
  \[
    \begin{tikzcd}
      \cdots \rar
      & H^i_{Z_\lambda}(X_\lambda,\sF_\lambda) \rar\dar
      & H^i(X_\lambda,\sF_\lambda) \rar\dar
      & H^i\bigl(U_\lambda,\sF_\lambda\rvert_{U_\lambda}\bigr) \rar\dar
      & \cdots\\
      \cdots \rar
      & H^i_{Z_\mu}(X_\mu,v_{\lambda\mu}^{-1}\sF_\lambda) \rar\dar
      & H^i(X_\mu,v_{\lambda\mu}^{-1}\sF_\lambda) \rar\dar
      & H^i\bigl(U_\mu,(v_{\lambda\mu}^{-1}\sF_\lambda)\rvert_{U_\mu}\bigr) \rar\dar
      & \cdots\\
      \cdots \rar
      & H^i_{Z_\mu}(X_\mu,\sF_\mu) \rar\dar
      & H^i(X_\mu,\sF_\mu) \rar\dar
      & H^i\bigl(U_\mu,\sF_\mu\rvert_{U_\mu}\bigr) \rar\dar
      & \cdots\\
      \cdots \rar
      & H^i_Z(X,v_\mu^{-1}\sF_\mu) \rar\dar
      & H^i(X,v_\mu^{-1}\sF_\mu) \rar\dar
      & H^i\bigl(U,(v_\mu^{-1}\sF_\mu)\rvert_U\bigr) \rar\dar
      & \cdots\\
      \cdots \rar
      & H^i_Z(X,\sF) \rar
      & H^i(X,\sF) \rar
      & H^i\bigl(U,\sF\rvert_U\bigr) \rar
      & \cdots
    \end{tikzcd}
  \]
  of Abelian groups with exact rows, where the vertical arrows in the top and third
  rows are induced by pulling back along $v_{\lambda\mu}$ and $v_\mu$
  and the vertical arrows in the second and
  bottom rows are obtained from the maps
  \[
    v_{\lambda\mu}^{-1}\sF_\lambda \longrightarrow v_{\lambda\mu}^*\sF_\lambda
    \xrightarrow{\varphi_{\lambda\mu}} \sF_\mu
  \]
  and the description of $\sF$ as a colimit of Abelian sheaves as in
  \eqref{eq:fk0427}.
  The commutative diagram is in fact a commutative diagram of $A$-modules by the
  argument in \cite[Chapter 0, \S4.3.$(c)$]{FK18}.
  Now taking colimits over all $\mu \ge \lambda \ge \alpha$, the middle and right
  arrows in the diagram yield isomorphisms of $A$-modules by Theorem
  \ref{thm:fklimitscoh}.
  The five lemma \cite[Chapter I, Proposition 1.1]{CE56} then implies the desired
  isomorphisms.
\end{proof}

\begingroup
\makeatletter
\renewcommand{\@secnumfont}{\bfseries}
\part{Relative vanishing and injectivity theorems}\label{part:vanishing}
\makeatother
\endgroup

\section{Approximating morphisms of schemes}\label{sect:mainapprox}
As outlined in \S\ref{sect:intro}, the idea in our proof of Theorem
\ref{thm:mainvanishing} is to approximate the morphism $f\colon X \to Y$ by
morphisms of varieties over $\QQ$.
Since this approximation construction takes up the bulk of the proof of Theorem
\ref{thm:mainvanishing}, we state and prove it separately below.
In \S\ref{sect:approxpositivity}, we prove that many ampleness conditions
on invertible sheaves behave well under limits.
We prove our approximation result (Lemma \ref{lem:approx}) in
\S\ref{sect:proofmainapprox}.
\subsection{Relative ampleness conditions and
limits}\label{sect:approxpositivity}
We prove that all ampleness conditions defined in Definition
\ref{def:fpositive} (except for $f$-nefness) behave well
under limits.
\begin{lemma}[(cf.\ {\cite[Lemme 8.10.5.2]{EGAIV3}})]\label{lem:positivitylimits}
  Let $\{S_\lambda\}_{\lambda \in \Lambda}$ be an inverse
  system of quasi-compact quasi-separated schemes with affine transition
  morphisms and limit $S$.
  Let $f_\alpha\colon X_\alpha \to Y_\alpha$ be a morphism of
  $S_\alpha$-schemes of finite presentation for some $\alpha \in \Lambda$, and
  let $\sL_\alpha$ be an invertible sheaf on $X_\alpha$.
  For every $\lambda \ge \alpha$, let
  \[
    f_\lambda \colon X_\lambda \longrightarrow Y_\lambda
  \]
  be the base change of $f_\alpha$ along $S_\lambda \to
  S_\alpha$, and denote by $\sL_\lambda$ the pullback of $\sL_\alpha$ to
  $X_\lambda$.
  Denote by $f\colon X \to Y$ the limit of the morphisms $f_\lambda$, and denote
  by $\sL$ the pullback of $\sL_\alpha$ to $X$.
  \begin{enumerate}[label=$(\roman*)$,ref=\roman*]
    \item\label{lem:positivitylimitsample}
      If $\sL$ is $f$-very ample (resp.\ $f$-ample), then
      there exists an index $\lambda \in \Lambda$ such that $\sL_\mu$ is
      $f_\mu$-very ample (resp.\ $f_\mu$-ample) for all
      $\mu \ge \lambda$.
    \item\label{lem:positivitylimitssemiample}
      Suppose $f_\alpha$ is quasi-separated.
      If $\sL$ is $f$-generated (resp.\ $f$-semi-ample), then
      there exists an index $\lambda \in \Lambda$ such that $\sL_\mu$ is
      $f_\mu$-generated (resp.\ $f_\mu$-semi-ample) for all
      $\mu \ge \lambda$.
    \item\label{lem:positivitylimitsbig}
      Suppose $f$ is a proper morphism of schemes.
      If $\sL$ is $f$-big, then
      there exists an index $\lambda \in \Lambda$ such that $\sL_\mu$ is
      $f_\mu$-big for all $\mu \ge \lambda$.
  \end{enumerate}
\end{lemma}
\begin{proof}
  The statement $(\ref{lem:positivitylimitsample})$ follows from \cite[Lemme
  8.10.5.2]{EGAIV3} since very ampleness (resp.\ ampleness) is stable
  under base change \cite[Propositions 4.4.10$(iii)$ and 4.6.13$(iii)$]{EGAII}.\smallskip
  \par For $(\ref{lem:positivitylimitssemiample})$, after replacing $\sL_\alpha$
  by a positive integer power, it suffices to show the $f$-generated case.
  For all $\mu \ge \alpha$, the pullback of the morphism $f_\mu^*f_{\mu*}\sL_\mu
  \to \sL_\mu$ is
  \[
    f^*w_\mu^*f_{\mu*}\sL_\mu =
    v_\mu^*f_\mu^*f_{\mu*}\sL_\mu \longrightarrow
    v_\mu^*\sL_\mu
  \]
  where $w_\mu\colon Y \to Y_\mu$ and $v_\mu\colon X \to X_\mu$
  are the canonical projection morphisms.
  The colimit of these morphisms is the adjunction morphism
  $f^*f_*\sL \to \sL$, since the left
  adjoint $f^*$ commutes with colimits and then by applying Theorem
  \ref{thm:fklimitsdirectimage} together with the
  isomorphism \cite[Chapter 0, Proposition 4.2.7]{FK18} that allows us to replace
  $w_\mu^{-1}$ with $w_\mu^*$.
  Now since $f_{\mu*}\sL_\mu$ is quasi-coherent \cite[Proposition
  6.7.1]{EGAInew}, we can apply \cite[Corollaire 8.5.7]{EGAIV3} to say that there
  exists $\lambda \in \Lambda$ such that
  \[
    f_\mu^*f_{\mu*}\sL_\mu \longrightarrow v_\mu^*\sL_\mu
  \]
  is surjective for all $\mu \ge \lambda$, as required.\smallskip
  \par We now show $(\ref{lem:positivitylimitsbig})$.
  We first note that the $f_\mu$ are proper 
  for
  large enough $\mu$ \cite[Th\'eor\`eme 8.10.5$(xii)$]{EGAIV3}.
  Moreover, by \cite[Proposition 8.4.2$(a)$$(i)$]{EGAIV3}, the morphisms $Y \to
  Y_\mu$ and $X \to X_\mu$ induce bijections on maximal points for large enough
  $\mu$.
  Note that $Y$ has only finitely many maximal points by the
  quasi-compactness of $S$.
  Thus, choosing $\lambda \ge \alpha$ large enough we may assume that
  the morphisms $X_\mu \to Y_\mu$ are proper and maximally dominating
  for all $\mu \ge \lambda$, and we
  may replace $Y_\mu$ by the spectra $\Spec(\kappa(\eta_\mu))$ of
  its residue fields at maximal points to assume that the $Y_\mu$ are
  spectra of fields $k_\mu$ with colimit $k$.
  \par Now let $n > 0$ be an integer such that $\sL^{\otimes n}$ induces a
  generically finite morphism onto its image.
  Then, there exists an open subset $U \subseteq \PP_{k}(H^0(X,\sL^{\otimes n}))$
  such that the rational map
  \[
    \begin{tikzcd}[column sep=5.5em]
      \phi_{\lvert \sL^{\otimes n} \rvert}
      \colon X \rar[dashed]{\lvert H^0(X,\sL^{\otimes n}) \rvert}
      & \PP_{k}\bigl( H^0(X,\sL^{\otimes n}) \bigr)
    \end{tikzcd}
  \]
  induced by $\sL^{\otimes n}$ restricts to a finite morphism over $U$.
  By \cite[Corollaire 8.6.4, Th\'eor\`eme 8.8.2$(i)$, and Corollaire
  8.8.2.5]{EGAIV3}, \cite[(4.1.3) and (4.2.10)]{EGAII}, and flat base change,
  after possibly replacing $\lambda$ by a larger index, we may assume there 
  exists an open subset
  \[
    U_\lambda \subseteq
    \PP_{k_\lambda}\bigl(H^0(X_\lambda,\sL_\lambda^{\otimes n})\bigr)
  \]
  such that
  $v_\lambda^{-1}(U_\lambda) = U$, and such that the rational map $\phi_{\lvert
  \sL^{\otimes n} \rvert}$ restricted to $U$ is the base change of the rational
  maps
  \[
    \begin{tikzcd}[column sep=5.875em]
      \phi_{\lvert \sL_\mu^{\otimes n} \rvert}
      \colon X_\mu \rar[dashed]{\lvert H^0(X_\mu,\sL_\mu^{\otimes n}) \rvert}
      & \PP_{k_\mu}\bigl( H^0(X_\mu,\sL_\mu^{\otimes n}) \bigr)
    \end{tikzcd}
  \]
  restricted to $U_\mu = v_{\lambda\mu}^{-1}(U_\lambda)$ for all $\mu
  \ge \lambda$.
  Moreover, we may assume that the maps $\phi_{\lvert \sL_\mu^{\otimes n}
  \rvert}$ restricted to $U_\lambda$ are finite for all $\mu \ge \lambda$
  by \cite[Th\'eor\`eme 8.10.5$(x)$]{EGAIV3}.
  We therefore see that $\phi_{\lvert \sL_\mu^{\otimes n} \rvert}$ induces a
  generically finite morphism onto its image for all $\mu \ge \lambda$, and
  hence $\sL_\mu$ is $f_\mu$-big for all $\mu \ge \lambda$.
\end{proof}
\subsection{The approximation lemma}\label{sect:proofmainapprox}
We now show our main approximation result.
\begin{lemma}\label{lem:approx}
  Let $\kk$ be a Noetherian ring and let
  $(R,\fm)$ be an integral Noetherian local $\kk$-algebra.
  Consider a proper surjective morphism $f\colon X \to \Spec(R)$ from an
  integral scheme $X$.
  Write $R$ as the colimit
  \begin{equation}\label{lem:approxrj}
    R \simeq \varinjlim_{\lambda \in \Lambda} R_\lambda
  \end{equation}
  of a direct system of sub-$\kk$-algebras of finite type indexed by
  a directed set $\Lambda$ and partially ordered by inclusion.
  We then have the following:
  \begin{enumerate}[label=$(\roman*)$,ref=\roman*]
    \item\label{lem:approxxj} There exists $\alpha \in \Lambda$ and a proper
      surjective morphism $f'_\alpha\colon X'_\alpha \to \Spec(R_\alpha)$ from a
      reduced scheme $X'_\alpha$ for which the diagram
      \[
        \begin{tikzcd}
          X \rar{f}\dar[swap]{v_\alpha'} & \Spec(R)\dar\\
          X'_\alpha \rar{f'_\alpha} & \Spec(R_\alpha)
        \end{tikzcd}
      \]
      is Cartesian.
      Moreover, $\alpha$ can be chosen such that
      denoting by $f'_\lambda\colon X'_\lambda \to \Spec(R_\lambda)$ the base
      change of $f'_\alpha$ to $\Spec(R_\lambda)$, there exist integral closed
      subschemes $X_\lambda \subseteq X'_\lambda$ for all $\lambda \ge \alpha$
      such that the following hold:
      \begin{itemize}
        \item Setting $\fm_\lambda \coloneqq \fm \cap R_\lambda$, we have
          \[
            \dim(X) \le \dim\bigr(X_\lambda \otimes_{R_\lambda}
            (R_\lambda)_{\fm_\lambda}\bigr)
          \]
          for all $\lambda \ge \alpha$.
        \item The limit of the morphisms 
          \[
            f_\lambda\colon X_\lambda \hooklongrightarrow X_\lambda'
            \overset{f'_\lambda}{\longrightarrow} \Spec(R_\lambda)
          \]
          with transition morphisms $v_{\lambda\mu}\colon X_\mu \to X_\lambda$
          is the morphism
          $f\colon X \to \Spec(R)$.
      \end{itemize}
    \item\label{lem:approxlj}
      Let $\sL$ be an invertible sheaf on $X$.
      Then, after possibly replacing $\alpha$ with a larger index, we can write
      \[
        \sL \simeq v_\alpha^*\sL_\alpha
      \]
      for an invertible sheaf $\sL_\alpha$ on $X_\alpha$, where
      $v_\alpha\colon X \to X_\alpha$ are the canonical projection morphisms.
      Moreover, if $\sL$ is $f$-very ample (resp.\ $f$-ample, $f$-semi-ample,
      $f$-big), then we
      may assume that the invertible sheaves $\sL_\lambda \coloneqq
      v_{\alpha\lambda}^*\sL_\alpha$ are $f_\lambda$-very ample (resp.\ $f_\lambda$-ample,
      $f_\lambda$-semi-ample, $f_\lambda$-big) for all $\lambda \ge \alpha$.
    \item\label{lem:approxwj}
      For each $\lambda \ge \alpha$,
      the inverse system
      \begin{equation}\label{eq:inversesystemwithsmooth}
        \Set[\big]{g_{\lambda,p}\colon W_{\lambda,p} \longrightarrow
        X_\lambda}_{p \in P_\lambda}
      \end{equation}
      of all projective birational morphisms from integral schemes
      $W_{\lambda,p}$ that are separated and of finite type over $\kk$
      such that $f_\lambda \circ g_{\lambda,p}$ is projective is
      nonempty and indexed by a directed set $P_\lambda$.
      Moreover, if projective resolutions of singularities (resp.\
      normalizations, Macaulayfications) exist for all
      integral schemes that are separated and of finite type over $\kk$,
      then we may assume that the schemes $W_{\lambda,p}$ are regular (resp.\
      normal, Cohen--Macaulay).
    \item\label{lem:approxlimit} Consider the set
      \[
        J = \bigsqcup_{\lambda \in \Lambda} P_\lambda
      \]
      with the preorder where
      $(\lambda,p) \le (\mu,q)$ if and only $\lambda \le \mu$ and
      the morphism $g_{\mu,q}$ fits into a commutative diagram
      \[
        \begin{tikzcd}
          W_{\mu,q} \rar{g_{\mu,q}} \dar
          & X_\mu \rar \dar{v_{\lambda\mu}}
          & \Spec(R_\mu) \dar\\
          W_{\lambda,p} \rar{g_{\lambda,p}} & X_\lambda \rar &
          \Spec(R_\lambda)\mathrlap{.}
        \end{tikzcd}
      \]
      Then, the set $J$ is filtered, and
      the morphism $\pi\colon \ZR(X) \to X$ of locally ringed spaces from
      the Zariski--Riemann space of $X$ is the limit of the inverse
      systems \eqref{eq:inversesystemwithsmooth} as $\lambda \in \Lambda$ also
      varies.
  \end{enumerate}
\end{lemma}
\begin{remark}\label{rem:maincasesforapprox}
  Let $\kk$ be a quasi-excellent Noetherian $\QQ$-algebra, in which case
  projective resolutions of singularities exist by
  \cite[Theorem 1.1]{Tem08}.
  In this case, given $f\colon X \to \Spec(R)$ as above, we have the commutative
  diagram
  \[
    \begin{tikzcd}
      \ZR(X) \rar{\pi}\dar & X \rar{f}\dar{v_\lambda} & \Spec(R)\dar\\
      W_{\lambda,p} \rar{g_{\lambda,p}} & X_\lambda \rar{f_\lambda} &
      \Spec(R_\lambda)
    \end{tikzcd}
  \]
  of locally ringed spaces, where all but $\ZR(X)$ are Noetherian
  $\kk$-schemes and the
  schemes in the bottom row are integral schemes that are
  separated and of finite type over $\kk$ with $W_{\lambda,p}$ regular,
  such that the morphisms in the top row are the limits of the morphisms in the
  bottom row.
  We can also localize the $R_\lambda$ at $\fm_\lambda = \fm \cap R_\lambda$
  without affecting the inverse limit (since the inverse systems satisfy the
  same universal property) in order to assume that the $R_\lambda$
  are local (although the schemes in the bottom row of the diagram above
  are now \emph{essentially} of finite type over $\kk$).
  \par The necessary normalizations exist in $(\ref{lem:approxwj})$
  when $\kk$ is a Nagata ring in the sense of \cite[Definition 31.A]{Mat80}, and
  the necessary Macaulayfications exist in $(\ref{lem:approxwj})$ when $\kk$ is
  CM-quasi-excellent in the sense of \cite[Definition 1.2]{Ces} by \cite[Remark
  1.4, Theorem 5.3, and Remark 5.4]{Ces}.
\end{remark}
\begin{proof}[Proof of Lemma \ref{lem:approx}]
  \par We construct the morphisms in $(\ref{lem:approxxj})$.
  The first part of the statement follows from \cite[Th\'eor\`eme
  8.8.2$(ii)$]{EGAIV3}.
  By \cite[Proposition 8.7.2 and Th\'eor\`eme 8.10.5$(vi)$,\allowbreak$(xii)$]{EGAIV3}, after possibly
  replacing $\alpha$ by a larger index, we may assume that $X'_\lambda$ is
  reduced and that $f'_\lambda$ is proper and surjective for all $\lambda \ge
  \alpha$.
  Denote by $\eta$ and $\eta_\lambda$ the generic points of $Y$ and $Y_\lambda$,
  respectively.
  By transitivity of fibers \cite[Corollaire 3.4.9]{EGAInew} and applying
  \cite[(4.4.1)]{EGAIV2}, the generic fibers
  $f_\lambda^{\prime-1}(\eta_\lambda)$ are also irreducible.
  \par Next, we show that we can replace the morphisms $f'_\lambda$ by some
  $f_\lambda\colon X_\lambda \to \Spec(R_\lambda)$ for integral closed schemes
  $X_\lambda \subseteq X'_\lambda$.
  For each $\lambda \ge \mu \ge \alpha$, we will construct the following
  commutative diagram:
  \begin{equation}\label{eq:constructingxlambdas}
    \begin{tikzcd}
      X \rar[equal] \dar[swap]{v_\mu} & X \rar{f} \dar{v'_\mu} & \Spec(R) \dar\\
      X_\mu \rar[hook] \dar[swap]{v_{\lambda\mu}} & X'_\mu \rar{f'_\mu}
      \dar{v'_{\lambda\mu}}
      & \Spec(R_\mu) \dar\\
      X_\lambda \rar[hook] & X'_\lambda \rar{f'_\lambda} &
      \Spec(R_\lambda)\mathrlap{.}
    \end{tikzcd}
  \end{equation}
  Here, the squares in the right column are Cartesian.
  The scheme $X_\lambda$ is the scheme-theoretic closure of
  $f_\lambda^{\prime-1}(\eta_\lambda)$, which coincides with the set-theoretic
  closure with reduced scheme structure since $X_\lambda'$ is reduced
  \cite[Corollaire 6.10.6]{EGAInew}.
  We define $X_\mu$ in a similar fashion.
  These schemes $X_\lambda$ and $X_\mu$ are irreducible by \cite[Chapitre 0,
  Proposition 2.1.13]{EGAInew}, hence integral.
  The morphisms in the rightmost column induce bijections
  on generic points for all $\mu \ge \lambda \ge \alpha$, since all rings in the
  direct system \eqref{lem:approxrj} are domains.
  Thus, by transitivity of scheme-theoretic images \cite[Proposition
  6.10.3]{EGAInew}, morphisms in the leftmost column exist in a way that makes
  the squares in left column commute.
  Now for every $\lambda \ge \alpha$, consider the composition
  \[
    f_\lambda \colon X_\lambda \hooklongrightarrow X'_\lambda
    \overset{f'_\lambda}{\longrightarrow} \Spec(R_\lambda).
  \]
  This morphism is proper since it is the composition of proper morphisms.
  The base change of $f_\lambda$ to the generic point is
  \begin{equation}\label{eq:isoalonggenfib}
    f_\lambda^{-1}(\eta_\lambda) \overset{\sim}{\longrightarrow}
    f_\lambda^{\prime-1}(\eta_\lambda) \longrightarrow
    \Spec\bigl(\kappa(\eta_\lambda)\bigr),
  \end{equation}
  since $X_\lambda$ is the set-theoretic closure of
  $f_\lambda^{\prime-1}(\eta_\lambda)$ in $X'_\lambda$, and hence $X_\lambda$
  and $X'_\lambda$ are isomorphic over the generic point $\eta_\lambda$.
  Thus, we see that the morphism $f_\lambda$ is surjective with irreducible
  generic fiber.
  By \cite[Lemme 13.1.2]{EGAIV3}, the squares
  \[
    \begin{tikzcd}
      X \rar{f}\dar[swap]{v_\lambda} & \Spec(R)\dar\\
      X_\lambda \rar{f_\lambda} & \Spec(R_\lambda)
    \end{tikzcd}
  \]
  are Cartesian for all $\lambda \ge \alpha$.
  We therefore see that $f\colon X \to \Spec(R)$ satisfies the universal
  property for the limit of the morphisms $f_\lambda$.
  \par Finally, to ensure that
  \[
    \dim(X) \le \dim\bigl(X_\lambda \otimes_{R_\lambda}
    (R_\lambda)_{\fm_\lambda}\bigr)
  \]
  for all $\lambda \ge
  \alpha$, we choose a maximal chain
  \begin{equation}\label{eq:chainofclosed}
    Z_0 \subsetneq Z_1 \subsetneq \cdots \subsetneq Z_{\dim(X)} = X
  \end{equation}
  of irreducible closed subsets in $X$, which exists since $\dim(X) < \infty$ by
  \cite[Corollaire 5.6.6]{EGAIV2}.
  Since limits commute with fiber products and since
  \[
    R \simeq \varinjlim_{\lambda \ge \alpha} (R_\lambda)_{\fm_\lambda}
  \]
  by \cite[Chapitre 0, Proposition 6.1.6$(ii)$]{EGAInew}, we have
  \[
    X \simeq \varprojlim_{\lambda \ge \alpha} \bigl(X_\lambda
    \otimes_{R_\lambda} (R_\lambda)_{\fm_\lambda}\bigr)
  \]
  since they satisfy the same universal property.
  Next, \cite[Proposition 8.6.3]{EGAIV3}
  says that the partially ordered set of closed subschemes in $X$ is the colimit
  of the partially ordered sets of closed subschemes in $X_\lambda
  \otimes_{R_\lambda} (R_\lambda)_{\fm_\lambda}$ as $\lambda
  \in \Lambda$ varies.
  Thus, after possibly replacing $\alpha$ by a larger index, we may assume that the
  chain \eqref{eq:chainofclosed}
  is the preimage of a chain of closed subsets in
  \[
    X_\alpha \otimes_{R_\alpha} (R_\alpha)_{\fm_\alpha}.
  \]
  Moreover, since the chain \eqref{eq:chainofclosed} is a chain of \emph{strict}
  inclusions of \emph{irreducible} closed subsets, we can apply
  \cite[Proposition 8.6.3]{EGAIV3} again to say that after possibly replacing
  $\alpha$ by a larger index, we have a chain
  \[
    Z_{0,\alpha} \subsetneq Z_{1,\alpha} \subsetneq \cdots \subsetneq
    Z_{\dim(X),\alpha} = X_\alpha \otimes_{R_\alpha}
    (R_\alpha)_{\fm_\alpha}
  \]
  with strict inclusions whose preimage in $X$ is the chain
  \eqref{eq:chainofclosed}, and that each closed subset in this chain is
  irreducible \cite[Proposition 8.4.2$(a)(i)$]{EGAIV3}.
  Since this chain of inclusions must base change to a chain of strict
  inclusions in $X$, we see that the preimage of this chain in
  \[
    X_\lambda \otimes_{R_\lambda} (R_\lambda)_{\fm_\lambda}
    \simeq X_\alpha \otimes_{R_\alpha} (R_\lambda)_{\fm_\lambda}
  \]
  is
  still a chain of closed subsets with strict inclusions for each
  $\lambda \ge \alpha$, which are still irreducible by
  \cite[Proposition 8.4.2$(a)(i)$]{EGAIV3}.\smallskip
  \par We now show $(\ref{lem:approxlj})$.
  By \cite[Th\'eor\`eme 8.5.2$(ii)$]{EGAIV3}, after possibly replacing
  $\alpha$ with a larger index, there exists a coherent sheaf $\sL'_\alpha$ on
  $X'_\alpha$ such that
  $v_\alpha^{\prime*}\sL'_\alpha \simeq \sL$.
  By \cite[Proposition 8.5.5]{EGAIV3}, after possibly replacing $\alpha$ by a larger
  index again, we may assume that the inverse image $\sL'_\lambda \coloneqq
  v_{\alpha\lambda}^{\prime*}\sL'_\alpha$ on $X'_\lambda$
  is invertible for all $\lambda \ge \alpha$.
  We now set
  \[
    \sL_\lambda \coloneqq \sL_\lambda'\rvert_{X_\lambda},
  \]
  which is invertible and  satisfies 
  $\sL \simeq v_\alpha^{\prime*}\sL'_\alpha \simeq v_\alpha^*\sL_\alpha$
  by the commutativity of the squares in the left column of
  \eqref{eq:constructingxlambdas}.
  \par We now show that if $\sL$ is $f$-very ample (resp.\ $f$-ample,
  $f$-semi-ample, $f$-big), then we may assume the same holds for
  $\sL_\lambda$ for all $\lambda \ge \alpha$.
  This holds for $\sL'_\lambda$ instead of $\sL_\lambda$ by Lemma
  \ref{lem:approx}.
  Restricting $\sL'_\lambda$ to $X_{\lambda}'$ preserves these properties in
  each case by \cite[Propositions 4.4.10$(i\,\textit{bis})$ and
  4.6.13$(i\,\textit{bis})$]{EGAII} for $f$-very ample and $f$-ample,
  \cite[Lemma 2.11$(i)$]{CT20} and its proof for $f$-generated and
  $f$-semi-ample, and the fact that $X_\lambda \hookrightarrow X_\lambda'$
  induces an
  isomorphism over the generic point $\eta_\lambda$ by construction for
  $f$-big (see \eqref{eq:isoalonggenfib}).\smallskip
  \par Next, we show $(\ref{lem:approxwj})$.
  For each $\lambda \in \Lambda$, consider the inverse system
  \begin{equation}\label{eq:inversesystemwithnonsmooth}
    \Set[\big]{g_{\lambda,p'}\colon W_{\lambda,p'} \longrightarrow
    X_\lambda}_{p' \in P'_\lambda}
  \end{equation}
  of all projective birational morphisms where the $W_{\lambda,p'}$ are integral
  schemes that are separated and of finite type over $\kk$.
  The inverse system \eqref{eq:inversesystemwithsmooth}
  is nonempty since it contains the identity and is indexed
  by a subset of $\AId_{X_\lambda}$ since every projective birational morphism to
  $X_\lambda$ is a blowup \cite[Corollaire 2.3.7]{EGAIII1}.
  This inverse system is coinitial with the inverse system
  \eqref{eq:inversesystemwithsmooth} by Chow's lemma \cite[Corollaire
  5.6.2]{EGAII}.
  \par If projective resolutions of singularities (resp.\ normalizations,
  Macaulayfications) exist for all integral schemes that are separated and of
  finite type over $\kk$, then the inverse system of morphisms in
  \eqref{eq:inversesystemwithsmooth} is coinitial with the subsystem consisting of
  morphisms from regular (resp.\ normal, Cohen--Macaulay) schemes
  $W_{\lambda,p}$.
  This proves the ``moreover'' statement.\smallskip
  \par Finally, it remains to show $(\ref{lem:approxlimit})$.
  Set
  \[
    J' = \bigsqcup_{\lambda \in \Lambda} P_\lambda'
  \]
  with the preorder where
  $(\lambda,p') \le (\mu,q')$ if and only $\lambda \le \mu$ and
  the morphism $g_{\mu,q'}$ fits into a commutative diagram
  \[
    \begin{tikzcd}
      W_{\mu,q'} \rar{g_{\mu,q'}} \dar
      & X_\mu \rar \dar{v_{\lambda\mu}}
      & \Spec(R_\mu) \dar\\
      W_{\lambda,p'} \rar{g_{\lambda,p'}} & X_\lambda \rar &
      \Spec(R_\lambda)\mathrlap{.}
    \end{tikzcd}
  \]
  By the argument in $(\ref{lem:approxwj})$, the two inverse systems
  \begin{alignat}{4}
    \nonumber
    &\bigl\{g_{\lambda,p} &{}\colon{}& W_{\lambda,p} &{}\longrightarrow{}&
    X_\lambda\bigr\}_{(\lambda,p) \in J}\\
    \label{eq:glambdapprimesystem}
    &\bigl\{g_{\lambda,p'} &{}\colon{}& W_{\lambda,p'} &{}\longrightarrow{}&
    X_\lambda\bigr\}_{(\lambda,p') \in J'}
  \end{alignat}
  are coinitial.
  It therefore suffices to show that $J'$ is filtered and that
  the morphism $\pi\colon \ZR(X) \to X$ is the limit of the morphisms in
  \eqref{eq:glambdapprimesystem}.
  \par To show that $J'$ is filtered, let $(\lambda_1,p_1')$ and
  $(\lambda_2,p_2')$ be two indices in $J'$.
  Since $\Lambda$ is directed, there exists $\mu \in \Lambda$ such that
  $\lambda_1 \le \mu$ and $\lambda_2 \le \mu$.
  We now claim we can construct a commutative diagram of the form
  \[
    \begin{tikzcd}[row sep=small]
      & W'_{\lambda_1,p_1'} \rar[hook] & W_{\lambda_1,p_1'}
      \times_{X_{\lambda_1}} X_\mu \arrow{dr}{\pr_2}\\
      W_{\mu,q'} \arrow{ur}\arrow{dr} & & & X_\mu\\
      & W'_{\lambda_2,p_2'} \rar[hook] & W_{\lambda_2,p_2'}
      \times_{X_{\lambda_2}} X_\mu \arrow{ur}[swap]{\pr_2}
    \end{tikzcd}
  \]
  where the composition $W_{\mu,q'} \to X_\mu$ is projective and birational, and
  where $W_{\mu,q'}$ is integral.
  We set $W'_{\lambda_1,p_1'}$ to be the closure of the inverse image of the
  open set in $X_\mu$
  over which the second projection $\pr_2\colon W_{\lambda_1,p_1'}
  \times_{X_{\lambda_1}} X_\mu \to X_\mu$ is an isomorphism, and
  similarly for $W'_{\lambda_2,p_2'}$.
  Since $W'_{\lambda_1,p_1'} \to X_\mu$ is a projective and birational morphism
  from an integral scheme by
  construction, it is the blowup along some ideal $\sI_1 \in \AId_{X_\mu}$ by
  \cite[Corollaire 2.3.7]{EGAIII1}, and similarly $W'_{\lambda_2,p_2'} \to X_\mu$ is
  the blowup along some ideal $\sI_2 \in \AId_{X_\mu}$.
  We can therefore consider the blowup $W_{\mu,q'} \to X_\mu$ along
  $\sI_1\sI_2$, which factors uniquely through $W'_{\lambda_1,p_1'}$ and
  $W'_{\lambda_2,p_2'}$ by the universal property of blowups
  \cite[\href{https://stacks.math.columbia.edu/tag/0806}{Tag
  0806}]{stacks-project}.
  Note that $W_{\mu,q'}$ is integral by \cite[Proposition 8.1.4$(i)$]{EGAII}.
  \par It remains to show that the limit of the morphisms in
  \eqref{eq:glambdapprimesystem} is indeed the morphism $\pi\colon \ZR(X) \to
  X$.
  We claim that the limit of the inverse system
  \begin{equation}\label{eq:glambdapprimesystemonx}
    \Set[\big]{g_{\lambda,p'} \times_{X_\lambda} \id_X \colon W_{\lambda,p'}
      \times_{X_\lambda} X \longrightarrow X_\lambda \times_{X_\lambda}
    X}_{(\lambda,p') \in J'}
  \end{equation}
  coincides with the limit of the inverse system \eqref{eq:glambdapprimesystem}.
  This follows since the squares
  \[
    \begin{tikzcd}[column sep=5em]
      W_{\lambda,p'} \times_{X_\lambda} X
      \rar{g_{\lambda,p'} \times_{X_\lambda} \id_X} \dar
      & X_\lambda \times_{X_\lambda} X \dar{v_\lambda}\\
      W_{\lambda,p'} \rar{g_{\lambda,p'}} & X_\lambda
    \end{tikzcd}
  \]
  are Cartesian, and hence the limits of the two inverse systems
  \eqref{eq:glambdapprimesystem} and \eqref{eq:glambdapprimesystemonx} satisfy
  the same universal property.
  \par We now show that the limit of the inverse system
  \eqref{eq:glambdapprimesystemonx} is indeed the morphism $\pi\colon \ZR(X) \to
  X$.
  It suffices to show that the the inverse system
  \eqref{eq:glambdapprimesystemonx} is coinitial with the inverse system
  defining $\ZR(X)$.
  Let $g_{\lambda,p'}\colon W_{\lambda,p'} \to X_\lambda$ be morphism in
  \eqref{eq:glambdapprimesystem}.
  As before, we know that $g_{\lambda,p'}$ is the blowup along some ideal
  $\sI \in \AId_{X_\lambda}$ by \cite[Corollaire 2.3.7]{EGAIII1}.
  We then have the commutative diagram
  \[
    \begin{tikzcd}[column sep=4em]
      X_{v_\lambda^{-1}\sI\cdot\cO_X} \rar{\pi_{v_\lambda^{-1}\sI\cdot\cO_X}}\dar
      & X\dar{v_\lambda}\\
      W_{\lambda,p'} \rar{g_{\lambda,p'}} & X_\lambda
    \end{tikzcd}
  \]
  by the universal property of blowups
  \cite[\href{https://stacks.math.columbia.edu/tag/0806}{Tag
  0806}]{stacks-project},
  where the top horizontal arrow is the blowup along
  $v_\lambda^{-1}\sI\cdot\cO_X$.
  Note that $v_\lambda^{-1}\sI\cdot\cO_X \in \AId_{X}$.
  By the universal property of fiber products, we see that
  $X_{v_\lambda^{-1}\sI\cdot\cO_X}$ factors through the base change of
  $g_{\lambda,p'}$.
  \par Conversely, suppose $\pi_\sI\colon X_\sI \to X$ is an admissible blowup.
  Then, by \cite[Th\'eor\`eme 8.8.2$(ii)$]{EGAIV3} (here we use the
  Noetherianity of $X$ to say that the blowup $\pi_\sI$ is finitely presented),
  there exists an index $\alpha \in
  \Lambda$ and a morphism $h'_\alpha\colon W'_\alpha \to X_\alpha$ for which the
  diagram
  \[
    \begin{tikzcd}
      X_\sI \rar{\pi_\sI}\dar & X\dar{v_\alpha}\\
      W'_\alpha \rar{h'_\alpha} & X_\alpha
    \end{tikzcd}
  \]
  is Cartesian.
  For each $\lambda \ge \alpha$,
  denote by $h'_\lambda\colon W'_\lambda \to X_\lambda$ the base change of
  $h'_\alpha$ to $X_\lambda$.
  By \cite[Proposition 8.7.2 and Th\'eor\`eme 8.10.5$(i)$,$(xiii)$]{EGAIV3}, for large
  enough $\lambda \ge \alpha$, the scheme $W'_\lambda$ is reduced, the morphism
  $h'_\lambda$ is projective, and the restriction of $h'_\lambda$ to an open
  subset $U_\lambda$ of $X_\lambda$ induces an isomorphism.
  Denote by $\xi$ and $\xi_\lambda$ the generic points of $X$ and
  $X_\lambda$, respectively.
  By transitivity of fibers \cite[Corollaire 3.4.9]{EGAInew} and
  \cite[(4.4.1)]{EGAIV2}, the generic fibers
  $h^{\prime-1}_\lambda(\xi_\lambda)$ are also irreducible.
  Now let $W_\lambda$ be the scheme-theoretic closure of
  $h^{\prime-1}_\lambda(\xi_\lambda)$, which coincides with the set-theoretic
  closure with reduced scheme structure since $W_\lambda'$ is reduced
  \cite[Corollaire 6.10.6]{EGAInew}.
  The scheme $W_\lambda$ is irreducible by \cite[Chapitre 0, Proposition
  2.1.13]{EGAInew}, hence integral.
  Now consider the composition
  \[
    h_\lambda \colon W_\lambda \hooklongrightarrow W'_\lambda
    \overset{h'_\lambda}{\longrightarrow} X_\lambda.
  \]
  This morphism is projective since it is the composition of projective
  morphisms, and is birational since its restriction to $U_\lambda$ is still an
  isomorphism.
  By \cite[Lemme 13.1.2]{EGAIV3}, the square
  \[
    \begin{tikzcd}
      X_\sI \rar{\pi_\lambda}\dar & X\dar{v_\lambda}\\
      W_\lambda \rar{h_\lambda} & X_\lambda
    \end{tikzcd}
  \]
  is Cartesian.
  Thus, the inverse system \eqref{eq:glambdapprimesystemonx} is coinitial with
  the inverse system defining $\ZR(X)$, and hence their limits
  coincide.
\end{proof}

\section{Relative vanishing and injectivity theorems for Zariski--Riemann spaces}
\label{sect:grvanishing}
Our goal in this section is to prove the following relative vanishing and
injectivity theorem for Zariski--Riemann spaces.
This theorem is stated using local cohomology
following the dual formulation of Grauert--Riemenschneider
vanishing due to Hartshorne and Ogus \cite[Proposition 2.2]{HO74}.
This dual formulation allows us to prove these statements for
Zariski--Riemann spaces.
This statement also has the advantage of not requiring the existence of dualizing
complexes or canonical sheaves $\omega_X$, which are not
known to behave well under limits.
\begin{theorem}\label{thm:zrmaindualvanishing}
  Let $(R,\fm)$ be an integral Noetherian local $\QQ$-algebra
  and set $Y \coloneqq \Spec(R)$.
  Let $f\colon X \to Y$ be a proper surjective
  morphism from an integral scheme $X$.
  Set $Z = f^{-1}(\{\fm\})$, and
  denote by $\pi\colon \ZR(X) \to X$ the canonical projection morphism from the
  Zariski--Riemann space of $X$.
  \par Consider an invertible sheaf $\sL$ on $X$.
  \begin{enumerate}[label=$(\roman*)$,ref=\roman*]
    \item\label{thm:zrmaindualvanishingkv}
      Suppose $\sL$ is $f$-big and $f$-semi-ample.
      Then, we have
      \[
        H^i_{\pi^{-1}(Z)}\bigl(\ZR(X),\pi^*\sL^{-1}\bigr) = 0
      \]
      for all $i < \dim(X)$.
    \item\label{thm:zrmaindualvanishinggr}
      Suppose $\sL$ is $f$-semi-ample.
      Let $D$ be an effective Weil divisor on $X$ for which there exists an
      integer $n > 0$ and an
      effective Weil divisor $D'$ on $X$ such that $\cO_X(D+D')
      \simeq \sL^{\otimes n}$.
      Then, the canonical morphisms
      \[
        H^i_{\pi^{-1}(Z)}\bigl(\ZR(X),\pi^*\bigl(\sL^{-k}(-D)\bigr)\bigr)
        \longrightarrow
        H^i_{\pi^{-1}(Z)}\bigl(\ZR(X),\pi^*\sL^{-k}\bigr)
      \]
      induced by the inclusion $\cO_{X}(-D) \hookrightarrow \cO_{X}$ are
      surjective for all $i$ and for all $k > 0$.
  \end{enumerate}
\end{theorem}
\par In \S\ref{sect:proofmaingr}, we prove that for Noetherian schemes $X$,
vanishing and injectivity can be stated in terms of higher direct images and
$\omega_X$ or in terms of local cohomology modules (Proposition
\ref{prop:aisastar}).
This will be used after reducing to the case of varieties over $\QQ$ to prove
Theorem \ref{thm:zrmaindualvanishing} and will also be used later to prove
Theorems \ref{thm:kvvanishing} and \ref{thm:mainvanishing}.
The key ingredient for showing the two formulations
are equivalent is a combination of Grothendieck local duality
and Grothendieck duality for proper morphisms, which is called the
\textsl{local-global duality of Lipman} in \cite[p.\ 283]{HHK98}.
We then prove Theorem \ref{thm:zrmaindualvanishing} in
\S\ref{sect:grvanishingzr} using our approximation results from
\S\ref{sect:mainapprox}.
\subsection{Lipman's local-global duality}\label{sect:proofmaingr}
We prove that Theorems \ref{thm:mainvanishing} and
\ref{thm:maindualvanishing} are equivalent when dualizing complexes exist.
The key ingredient is the following duality statement due to Lipman
\cite{Lip78}.
See \cite[Definition on p.\ 276]{Har66} for the notion of a normalized dualizing
complex used below.
Hartshorne and Ogus give a different approach using formal
duality in the proof of \cite[Proposition 2.2]{HO74}.
If $\sL$ is a locally free sheaf of finite rank on a ringed space $X$,
the \textsl{dual} of $\sL$ is $\sL^\vee\coloneqq \HHom_{\cO_X}(\sL,\cO_X)$.
\begin{lemma}[(see {\cite[Theorem on p.\ 188]{Lip78}})]\label{lem:lipmandual}
  Let $f\colon X \to \Spec(R)$ be a proper morphism where $(R,\fm)$ is a
  Noetherian local ring with a normalized dualizing complex $\omega_R^\bullet$.
  Set $Z = f^{-1}(\{\fm\})$, and let $\sL$ be a locally free sheaf of finite
  rank on $X$.
  Then, there is a quasi-isomorphism
  \begin{align*}
    \RR\Gamma_Z(X,\sL^{\vee}) &\simeq \Hom_R\bigl(\RR f_*
    (\omega_X^\bullet\otimes_{\cO_X}\sL),E\bigr),
    \intertext{functorial in $\sL$, where $\omega_X^\bullet =
    f^!\omega_R^\bullet$ and where $E$ is the injective
    hull of the residue field of $R$.
    In particular, if $X$ is Cohen--Macaulay of pure dimension $n$, we have
    an isomorphism}
    H^i_Z(X,\sL^{\vee}) &\simeq \Hom_R\bigl( R^{n-i} f_*
    (\omega_X \otimes_{\cO_X} \sL),E\bigr)
  \end{align*}
  for every $i$,
  where $\omega_X$ denotes the unique cohomology sheaf of
  $f^!\omega_Y^\bullet$.
\end{lemma}
\begin{proof}
  We follow the proof in \cite[Theorem on p.\ 188]{Lip78}, keeping track of
  morphisms $\sL \to \sM$ of locally free sheaves of finite rank along the way.
  We have the commutative diagram
  \begin{equation}\label{eq:lipmandual1}
    \begin{tikzcd}
      \RR f_*(\omega_X^\bullet \otimes \sL)\dar & \lar[swap]{\sim}
      \RR f_*\HHom_{\cO_X}(\sL^\vee,\omega_X^\bullet) \dar
      \rar{\sim} & \Hom_R(\RR f_*\sL^\vee,\omega_R^\bullet) \dar\\
      \RR f_*(\omega_X^\bullet \otimes \sM) & \lar[swap]{\sim}
      \RR f_*\HHom_{\cO_X}(\sM^\vee,\omega_X^\bullet)
      \rar{\sim} & \Hom_R(\RR f_*\sM^\vee,\omega_R^\bullet)
    \end{tikzcd}
  \end{equation}
  where the horizontal arrows are isomorphisms induced by the isomorphism of
  functors
  \[
    \RRHHom_{\cO_X}(\sL^{\vee},-) \overset{\sim}{\longrightarrow} -
    \otimes_{\cO_X} \sL
  \]
  coming from \cite[Chapter V, Corollary 6.3]{Har66} and the evaluation at $1$
  map in the left square, and are induced by Grothendieck duality
  \citeleft\citen{Har66}\citemid Chapter VII, Corollary 3.4$(c)$\citepunct
  \citen{Con00}\citemid Theorem 3.4.4\citeright\ in the right square.
  Since $\RR f_*\sL^\vee$ is quasi-coherent with coherent cohomology (by the
  assumption that $f$ is proper \cite[Th\'eor\`eme 3.2.1]{EGAIII1}), we can apply local
  duality \cite[Chapter V, Corollary 6.3]{Har66} to obtain the commutative diagram
  \begin{equation}\label{eq:lipmandual2}
    \begin{tikzcd}
      \RR\Gamma_\fm(R,\RR f_*\sL^\vee)
      \rar{\sim} & \Hom_R\bigl(\Hom_R(\RR f_*\sL^\vee,\omega_R^\bullet),E\bigr)
      \\
      \RR\Gamma_\fm(R,\RR f_*\sM^\vee) \uar
      \rar{\sim} & \Hom_R\bigl(\Hom_R(\RR f_*\sM^\vee,\omega_R^\bullet),E\bigr)
      \uar
    \end{tikzcd}
  \end{equation}
  where the horizontal arrows are isomorphisms.
  Using the isomorphism of functors $\RR\Gamma_\fm \circ \RR f_* \simeq
  \RR\Gamma_Z$, we can identify the objects in the left column with
  $\RR\Gamma_Z(X,\sL^\vee)$ and $\RR\Gamma_Z(X,\sM^\vee)$, respectively.
  We can then combine the diagrams we have obtained so far to obtain the
  following commutative diagram:
  \[
    \begin{tikzcd}
      \RR\Gamma_Z(X,\sL^\vee)
      \rar{\sim} & \Hom_R\bigl(\Hom_R(\RR f_*\sL^\vee,\omega_R^\bullet),E\bigr)
      \rar[dash]{\sim} & \Hom_R\bigl(\RR f_*(\omega_X^\bullet \otimes \sL),E\bigr)\\
      \RR\Gamma_Z(X,\sM^\vee) \uar
      \rar{\sim} & \Hom_R\bigl(\Hom_R(\RR f_*\sM^\vee,\omega_R^\bullet),E\bigr)
      \uar \rar[dash]{\sim} & \Hom_R\bigl(\RR f_*(\omega_X^\bullet \otimes
      \sM),E\bigr)\mathrlap{.}\uar
    \end{tikzcd}
  \]
  Here, the left square is \eqref{eq:lipmandual2} with the identification
  $\RR\Gamma_\fm \circ \RR f_* \simeq \RR\Gamma_Z$ made above, and the right
  square is obtained from \eqref{eq:lipmandual1} and applying $\Hom_R(-,E)$
  which has no higher Ext modules since $E$ is injective.
  Finally,
  the ``in particular'' statement follows from the first statement after taking
  $i$-th cohomology, since in this case $\omega_X \simeq
  \omega_X^\bullet[-n]$ by local duality \cite[Chapter V, Corollary 6.3]{Har66}.
\end{proof}
We now show that Theorems \ref{thm:mainvanishing} and
\ref{thm:maindualvanishing} are equivalent when dualizing complexes exist.
\begin{proposition}\label{prop:aisastar}
  Let $f\colon X \to Y$ be a proper morphism of
  Noetherian schemes, and
  suppose that $X$ is Cohen--Macaulay and equidimensional and that $Y$ has a dualizing
  complex $\omega_Y^\bullet$.
  Denote by $\omega_X$ the unique cohomology sheaf of
  $f^!\omega_Y^\bullet$ (after possibly applying shifts on each connected
  component of $X$).
  \par Consider an invertible sheaf $\sL$ on $X$ and fix $y \in Y$.
  Denote by $f_y\colon X_y \to \Spec(\cO_{Y,y})$ the base
  change of $f$ along $\Spec(\cO_{Y,y}) \to Y$, denote by $\sL_y$ the pullback of
  $\sL$ to $X_y$, and set $Z_y = f_y^{-1}(y)$.
  For all $i$, we have the following:
  \begin{enumerate}[label=$(\roman*)$,ref=\roman*]
    \item\label{prop:aisastarkv}
      $R^i f_{y*}(\omega_{X_y} \otimes_{\cO_{X_y}} \sL_y) = 0$ if and only
      if $H^{\dim(X_y)-i}_{Z_y}(X_y,\sL_y^{-1}) = 0$.
    \item\label{prop:aisastargr}
      Let $D$ be an effective Cartier divisor on $X$, and denote by $D_y$
      the pullback of $D$ to $X_y$.
      Then, the canonical morphism
      \[
        R^i f_{y*}(\omega_{X_y} \otimes_{\cO_{X_y}} \sL_y) \longrightarrow R^i
        f_{y*}\bigl(\omega_{X_y} \otimes_{\cO_{X_y}} \sL_y(D_y)\bigr)
      \]
      induced by the inclusion $\cO_{X_y} \to \cO_{X_y}(D_y)$ is
      injective if and only if the canonical morphism
      \[
        H^{\dim(X_y)-i}_{Z_y}\bigl(X_y,\sL_y^{-1}(-D_y)\bigr) \longrightarrow
        H^{\dim(X_y)-i}_{Z_y}(X_y,\sL_y^{-1})
      \]
      induced by the inclusion $\cO_{X_y}(-D_y) \to \cO_{X_y}$ is surjective.
  \end{enumerate}
\end{proposition}
\begin{proof}
  Since all statements are local by flat base change,
  we may replace $Y$ with $\Spec(\cO_{Y,y})$ to assume that
  $Y$ is the spectrum of a Noetherian local ring $(R,\fm)$ with a dualizing
  complex, since dualizing complexes localize \cite[Chapter V, Corollary 2.3]{Har66}.
  After translating the dualizing complex, we may assume it is normalized.
  \par We first consider $(\ref{prop:aisastarkv})$.
  Let $E$ denote the injective hull of the residue field of $R$.
  Since the functor $\Hom_R(-,E)$ is faithfully exact \cite[Corollary 3.2(2)]{Ish64}, we see
  that $R^if_*(\omega_X \otimes_{\cO_X} \sL) = 0$ if and only if
  \[
    \Hom_R\bigl(R^if_*(\omega_X \otimes_{\cO_X} \sL),E\bigr) = 0.
  \]
  By local-global duality (Lemma \ref{lem:lipmandual}), this is equivalent to
  $H^{\dim(X)-i}_Z(X,\sL^{-1}) = 0$.
  \par We now consider $(\ref{thm:maindualvanishinggr})$.
  Since $\Hom_R(-,E)$ is faithfully exact \cite[Corollary 3.2(2)]{Ish64}, 
  the morphism
  \begin{align*}
    R^i f_*(\omega_X \otimes_{\cO_X} \sL) &\longrightarrow R^i
    f_*\bigl(\omega_X \otimes_{\cO_X} \sL(D)\bigr)
    \intertext{is injective if and only if}
    \Hom_R\bigl(R^if_*\bigl(\omega_X \otimes_{\cO_X} \sL(D)\bigr),E\bigr)
    &\longrightarrow
    \Hom_R\bigl(R^if_*(\omega_X \otimes_{\cO_X} \sL),E\bigr)
    \intertext{is surjective.
    This is equivalent to the surjectivity of}
    H^{\dim(X)-i}_Z\bigl(X,\sL^{-1}(-D)\bigr) &\longrightarrow
    H^{\dim(X)-i}_Z(X,\sL^{-1})
  \end{align*}
  by local-global duality (Lemma \ref{lem:lipmandual}).
\end{proof}
\subsection{Relative vanishing and injectivity theorem for Zariski--Riemann
spaces}\label{sect:grvanishingzr}
In this subsection, we prove our relative vanishing and injectivity theorem for
Zariski--Riemann spaces using our approximation results in
\S\ref{sect:mainapprox}.
As outlined in \S\ref{sect:intro}, the idea is to approximate the local
cohomology modules in question by approximating the morphism $f\colon X \to Y$ by
a morphism of $\QQ$-varieties.
We will show later that this vanishing descends to $X$
using relative vanishing
for the canonical morphism $\pi\colon \ZR(X) \to X$ from the Zariski--Riemann space
associated to $X$ (Theorem \ref{thm:kempflike}).
\begin{proof}[Proof of Theorem \ref{thm:zrmaindualvanishing}]
  For $(\ref{thm:zrmaindualvanishinggr})$,
  since the map $\cO_X \to \cO_X(D)$ factors the map $\cO_X \to \cO_X(D+D')$, we
  can replace $D$ by $D+D'$ to assume that $\cO_X(D) \simeq \sL^{\otimes n}$,
  and in particular, we may assume that $D$ is Cartier.
  \par We now proceed in a sequence of steps.
  \begin{step}\label{step:zrstsspecialcaseoverq}
    It suffices to show that
    for morphisms $f\colon X \to Y$ fitting into a Cartesian
    diagram
    \[
      \begin{tikzcd}
        X_y \rar{f_y}\dar & \Spec(\cO_{Y,y})\dar\\
        X \rar{f} & Y
      \end{tikzcd}
    \]
    where $f\colon X \to Y$ is a morphism of varieties over
    $\QQ$, $X$ is smooth, $Y$ is affine, and $y \in Y$ is a point, we have
    \[
      H^i_{f_y^{-1}(\{y\})}(X_y,\sL_y^{-1}) = 0
    \]
    for all $i < \dim(X_y)$ for $(\ref{thm:zrmaindualvanishingkv})$
    and the morphisms
    \[
      H^i_{f_y^{-1}(\{y\})}\bigl(X_y,\sL_y^{-k}(-D)\bigr)
      \longrightarrow
      H^i_{f_y^{-1}(\{y\})}(X_y,\sL_y^{-k})
    \]
    are surjective for all $i$ and all $k > 0$ for $(\ref{thm:zrmaindualvanishinggr})$,
    where $\sL_y$ is the pullback of
    $\sL$ to $X_y$.
  \end{step}
  By Lemma \ref{lem:approx} and Remark \ref{rem:maincasesforapprox} applied to
  $f\colon X \to \Spec(R)$ and $\kk = \QQ$, we have the commutative diagram
  \begin{equation}\label{eq:zrapproxlemapp}
    \begin{tikzcd}
      \ZR(X) \rar{\pi}\dar & X \rar{f}\dar & \Spec(R)\dar\\
      \tilde{W}_{\lambda,p} \rar{\tilde{g}_{\lambda,p}}\dar
      & \tilde{X}_\lambda \rar{\tilde{f}_\lambda}\dar &
      \Spec\bigl((R_\lambda)_{\fm_\lambda}\bigr)\dar\\
      W_{\lambda,p} \rar{g_{\lambda,p}} & X_\lambda \rar{f_\lambda} &
      \Spec(R_\lambda)
    \end{tikzcd}
  \end{equation}
  of locally ringed spaces, where all but $\ZR(X)$ are Noetherian schemes of equal
  characteristic zero, the bottom squares are Cartesian,
  and where the schemes in the bottom row are varieties over $\QQ$ with
  $W_{\lambda,p}$ smooth, such that the
  morphisms in the top row are the limits of the morphisms in the bottom row.
  As in the proof of Lemma \ref{lem:approx}$(\ref{lem:approxxj})$ and in Remark
  \ref{rem:maincasesforapprox}, the inverse limits of the morphisms in the
  middle and bottom row satisfy the same universal property, and hence both
  limit to the morphisms in the top row.
  By Lemma \ref{lem:approx}$(\ref{lem:approxlj})$, we also have that
  \[
    \sL \simeq v_\lambda^*\sL_\lambda \simeq \varinjlim_{\lambda \in
    \Lambda}v_\lambda^*\sL_\lambda
  \]
  for $f_\lambda$-big and $f_\lambda$-semi-ample (resp.\ $f_\lambda$-semi-ample)
  invertible sheaves $\sL_\lambda$, where $v_\lambda \colon X \to X_\lambda$ are
  the canonical projection morphisms.
  We denote by $\tilde{\sL}_\lambda$ the pullback of $\sL_\lambda$ to
  $\tilde{X}_\lambda$, which is $\tilde{f}_\lambda$-semi-ample by \cite[Lemma
  2.12$(i)$]{CT20}, and is also $\tilde{f}_\lambda$-big in situation
  $(\ref{thm:zrmaindualvanishingkv})$ by the fact that the base change
  $R_\lambda \to (R_\lambda)_{\fm_\lambda}$ does not affect generic fibers.
  We also know that $g_{\lambda,p}^*\sL_\lambda$ is $(f_\lambda \circ
  g_{\lambda,p})$-semi-ample by \cite[Lemma 2.11]{CT20}, and is also
  $(f_\lambda \circ g_{\lambda,p})$-big in situation
  $(\ref{thm:zrmaindualvanishingkv})$ by the fact that
  $g_{\lambda,p}$ induces a birational morphism along the generic fiber of
  $f_\lambda$.
  The same reasoning applies to $\tilde{g}_{\lambda,p}^*\tilde{\sL}_\lambda$.
  \par To show Theorem
  \ref{thm:zrmaindualvanishing}$(\ref{thm:zrmaindualvanishingkv})$, the
  hypothesis in Step \ref{step:zrstsspecialcaseoverq} implies
  \[
    H^i_{(\tilde{f}_\lambda \circ \tilde{g}_{\lambda,p})^{-1}(\{\fm_\lambda\})}
    \bigl(\tilde{W}_{\lambda,p},\tilde{g}_{\lambda,p}^*\tilde{\sL}_\lambda^{-1}\bigr) = 0
  \]
  for all $i < \dim(X_\lambda \otimes_{R_\lambda} (R_\lambda)_{\fm_\lambda})$
  since $\tilde{g}_{\lambda,p}^*\tilde{\sL}_\lambda^{-1}$
  is $(\tilde{f}_\lambda \circ \tilde{g}_{\lambda,p})$-big and
  $(\tilde{f}_\lambda \circ \tilde{g}_{\lambda,p})$-semi-ample.
  Since
  \[
    \dim(X) \le \dim\bigl(X_\lambda \otimes_{R_\lambda}
    (R_\lambda)_{\fm_\lambda}\bigr)
  \]
  for all $\lambda \in \Lambda$ (see Lemma
  \ref{lem:approx}$(\ref{lem:approxxj})$), this implies in particular that the
  vanishing holds for all $i < \dim(X)$.
  Taking colimits over all $(\lambda,p) \in J$, Theorem
  \ref{lem:lccolimits} implies
  \[
    H^i_{\pi^{-1}(Z)}\bigl(\ZR(X),\pi^*\sL^{-1}\bigr) =
    H^i_{(f \circ \pi)^{-1}(\{\fm\})}\bigl(\ZR(X),\pi^*\sL^{-1}\bigr) = 0
  \]
  for all $i < \dim(X)$,
  where the colimit of the inverse images of the sheaves
  $\tilde{g}_{\lambda,p}^*\tilde{\sL}_\lambda$ on $\ZR(X)$ is $\pi^*\sL$ by the
  commutativity of the diagram \eqref{eq:zrapproxlemapp}.
  \par It remains to show Theorem
  \ref{thm:zrmaindualvanishing}$(\ref{thm:zrmaindualvanishinggr})$.
  By \cite[\href{https://stacks.math.columbia.edu/tag/0B8W}{Tag
  0B8W}(3)]{stacks-project}, we can find $\alpha \in \Lambda$ such that
  $\cO_X(-D) \simeq v_\alpha^{-1}\sI_\alpha\cdot\cO_{X}$ for an ideal sheaf $\sI_\alpha
  \subseteq \cO_{X_\alpha}$.
  After replacing $X_\alpha$ by the blowup of $X_\alpha$ along $\sI_\alpha$,
  we may assume that $\sI_\alpha$ is invertible.
  Let $D_\alpha$ be the effective Cartier divisor corresponding to $\sI_\alpha$.
  By \cite[Corollaire 8.5.2.5]{EGAIV3}, after possibly replacing $\alpha$ by a
  larger index and $D_\alpha$ by its pullback (which exist since the $v_{\alpha\lambda}$
  are surjective morphisms of integral Noetherian schemes \cite[Proposition
  21.4.5$(iii)$]{EGAIV4}),
  we may assume that $\cO_{X_\alpha}(-D_\alpha) \simeq
  \sL_\alpha^{\otimes-n}$.
  We can therefore write
  the injection $\sL^{-k}(-D) \hookrightarrow \sL^{-k}$ as the colimit of
  the injections
  \begin{align*}
    v_\lambda^*\bigl(\sL_\lambda^{-k}(-D_\lambda)\bigr)
    \hooklongrightarrow{}& v_\lambda^*\bigl(\sL_\lambda^{-k}\bigr).
  \intertext{Denoting by $\tilde{D}_\lambda$ the restriction of $D_\lambda$ to
  $\tilde{X}_\lambda$, we see that}
    H^i_{(\tilde{f}_\lambda \circ \tilde{g}_{\lambda,p})^{-1}(\{\fm_\lambda\})}
    \bigl(\tilde{W}_{\lambda,p},\tilde{g}_{\lambda,p}^*\tilde{\sL}_\lambda^{-k}
    (-\tilde{D}_\lambda)\bigr)
    \longrightarrow{}&
    H^i_{(\tilde{f}_\lambda \circ \tilde{g}_{\lambda,p})^{-1}(\{\fm_\lambda\})}
    (\tilde{W}_{\lambda,p},\tilde{g}_{\lambda,p}^*\tilde{\sL}_\lambda^{-k})
  \intertext{is surjective for all $i$ since
  $\tilde{g}_{\lambda,p}^*\tilde{\sL}_\lambda$ is
  $f$-semi-ample by \cite[Lemma 2.11]{CT20}.
  Taking colimits over all $(\lambda,p) \in J$, Theorem
  \ref{lem:lccolimits} implies}
    H^i_{(f \circ \pi)^{-1}(\{\fm\})}\bigl(\ZR(X),\pi^*\bigl(\sL^{-k}(-D)\bigr)
    \bigr) \longrightarrow{}&
    H^i_{(f \circ \pi)^{-1}(\{\fm\})}\bigl(\ZR(X),\pi^*\sL^{-k}\bigr),
  \end{align*}
  is surjective for all $i$, where the colimit of the inverse
  images of the sheaves $g_{\lambda,p}^*(\cO_{X_\lambda}(-D_\lambda))$ on
  $\ZR(X)$ is $\pi^*\cO_X(-D)$ by the commutativity of the diagram
  \eqref{eq:zrapproxlemapp}.
  \begin{step}\label{step:specialcaseoverqzr}
    Conclusion of proof.
  \end{step}
  \par We start by proving the special case of Theorem
  \ref{thm:zrmaindualvanishing}$(\ref{thm:zrmaindualvanishingkv})$
  stated in Step \ref{step:zrstsspecialcaseoverq}.
  By Proposition \ref{prop:aisastar}$(\ref{prop:aisastarkv})$ and flat base
  change, it suffices to show that
  \[
    R^if_*(\omega_{X} \otimes_{\cO_X} \sL) = 0
  \]
  for all $i > 0$.
  Consider the Stein factorization
  \[
    X \overset{f'}{\longrightarrow} Y' \overset{g}{\longrightarrow} Y
  \]
  of $f$.
  We can replace $f\colon X \to Y$ by $f'\colon X \to Y'$ to assume that
  $Y$ is normal, since the relative normalization morphism $g$ 
  is affine and hence
  $R^if'_*(\omega_X \otimes_{\cO_X} \sL)$ vanishes if and only if
  $R^if_*(\omega_X \otimes_{\cO_X} \sL)$ does.
  By flat base change and the fact that
  the formation of $\omega_X$ is compatible with ground field extensions
  \cite[Chapter V, Corollary 3.4$(a)$]{Har66}, it suffices to show that denoting by
  \[
    f_\CC\colon X_\CC \longrightarrow Y_\CC
  \]
  the base change of $f$ along the field extension $\QQ\subseteq\CC$, we have
  \[
    R^if_{\CC*}(\omega_{X_\CC} \otimes_{\cO_{X_\CC}} \sL) = 0
  \]
  for all $i > 0$.
  We note that $f_\CC$ is maximally dominating by the flatness of $\QQ \subseteq
  \CC$ \cite[Expos\'e II, Proposition 1.1.5]{ILO14}.
  Since $Y_\CC$ is normal, it is the disjoint union of normal varieties.
  We claim we may work one irreducible component at a time to assume that
  $f_\CC$ is a projective surjective morphism of complex varieties.
  Note that $\sL_\CC$ is $f_\CC$-semiample by \cite[Lemma 2.12$(i)$]{CT20}.
  By transitivity of fibers \cite[Corollaire 3.4.9]{EGAInew}, the compatibility
  of maps induced by linear systems and flat base change \cite[(4.2.10)]{EGAII}, and
  the fact that generically finite morphisms are stable under flat base change
  (combine \cite[Expos\'e II, Proposition 1.1.5]{ILO14} and the characterization
  of generically finite morphisms in \cite[Expos\'e II, Proposition 1.1.7]{ILO14}),
  we see that the restriction of large enough powers of $\sL_\CC$ induce
  generically finite morphisms on each fiber of $f_\CC$ over the maximal points
  of $Y$.
  We therefore see that $\sL_\CC$ is $f_\CC$-big.
  Now the required vanishing holds in situation $(\ref{thm:zrmaindualvanishingkv})$ by
  relative Kawamata--Viehweg vanishing for complex algebraic varieties
  \cite[Theorem 1-2-3]{KMM87}.
  \par It remains to show Theorem
  \ref{thm:zrmaindualvanishing}$(\ref{thm:zrmaindualvanishinggr})$
  holds in the special case
  stated in Step \ref{step:zrstsspecialcaseoverq}.
  By Proposition \ref{prop:aisastar}$(\ref{prop:aisastargr})$,
  flat base change, and the fact that
  the formation of $\omega_X$ is compatible with ground field extensions
  \cite[Chapter V, Corollary 3.4$(a)$]{Har66}, it suffices to show that denoting by
  \[
    f_\CC\colon X_\CC \longrightarrow Y_\CC
  \]
  the base change of $f$ along the field extension $\QQ\subseteq\CC$, the
  morphisms
  \[
    R^if_*(\omega_{X_\CC} \otimes_{\cO_{X_\CC}} \sL^{\otimes k}_\CC) \longrightarrow
    R^if_*\bigl(\omega_{X_\CC} \otimes_{\cO_{X_\CC}} \bigl(\sL^{\otimes k}(D)\bigr)_\CC
    \bigr)
  \]
  are injective for all $i$ and all $k > 0$, where $\sL^{\otimes k}_\CC$ and
  $(\sL^{\otimes k}(D))_\CC$ are the pullbacks of $\sL^{\otimes k}$ and
  $\sL^{\otimes k}(D)$ to $X_\CC$,
  respectively.
  This statement holds by Fujino's version of Koll\'ar's injectivity theorem for
  simple normal crossings pairs \cite[Theorem 5.6.1]{Fuj17}.
\end{proof}
\section{Rational singularities via Zariski--Riemann spaces}\label{sect:kempf}
In this section, we prove a new characterization of rational singularities and
pseudo-rational rings via Zariski--Riemann spaces in equal characteristic zero.
An advantage of this characterization is that we do not need resolutions of
singularities, quasi-excellence, or the existence of dualizing complexes.
This characterization is a version of the characterizations of rational
singularities for varieties over fields of
characteristic zero due to Lipman and Teissier \cite[$(iv)'$ on p.\ 102
and Corollary of $(iii)$ on p.\ 107]{LT81}, Lipman \cite[Lemma 4.2]{Lip94}, and
Kov\'acs \cite[Theorem 1]{Kov00}.
We have modeled the formulation of our characterization after the 
characterization of Du Bois singularities due
to Godfrey and the author of the present paper
\cite[Theorem 2.3]{GM}, where the $0$-th graded piece of the Deligne--Du Bois
complex $\underline{\Omega}^0_X$
takes the role of $\RR\pi_*\cO_{\ZR(X)}$.\medskip
\par We start with the following result for pseudo-rationality.
\begin{lemma}\label{lem:kempflikepseudorat}
  Let $(R,\fm)$ be a Noetherian local ring of dimension $d$,
  and set $X \coloneqq \Spec(R)$.
  Denote by $\pi\colon \ZR(X) \to X$ the canonical projection from the
  Zariski--Riemann space of $X$.
  For every $i$, the map
  \begin{equation}\label{eq:lcmaptozr}
    H^i_{\fm}(R) \overset{\delta^i_\pi}{\longrightarrow}
    H^i_{\pi^{-1}(\{\fm\})}\bigl(\ZR(X),\cO_{\ZR(X)}\bigr)
  \end{equation}
  is injective if and only if for every proper birational morphism $W \to X$,
  the map
  \[
    H^i_{\fm}(R) \overset{\delta^i_f}{\longrightarrow}
    H^i_{f^{-1}(\{\fm\})}\bigl(W,\cO_{W}\bigr)
  \]
  is injective.
  \par In particular, $R$ is pseudo-rational if and only if $\delta^d_\pi$
  is injective and $R$ is normal, Cohen--Macaulay, and analytically
  unramified.
\end{lemma}
\begin{proof}
  By Theorem \ref{lem:lccolimits}, we have a factorization
  \[
    H^i_{\fm}(R) \overset{\delta^i_f}{\longrightarrow} H^i_{f^{-1}(\{\fm\})}
    (W,\cO_W) \longrightarrow
    H^i_{(f \circ g)^{-1}(\{\fm\})} (\tilde{W},\cO_{\tilde{W}})
    \longrightarrow H^i_{\pi^{-1}(\{\fm\})}\bigl(\ZR(X),\cO_{\ZR(X)}\bigr)
  \]
  of $\delta^i_\pi(\cO_X)$ for every proper birational morphism $f\colon W \to
  X$, where $g\colon \tilde{W} \to W$ is a projective birational morphism such
  that $f \circ g$ is a blowup of $X$.
  Such a morphism $g$ exists by Chow's lemma \cite[Th\'eor\`eme 5.6.1]{EGAII}
  and the fact that every projective birational morphism is a blowup
  \cite[Corollaire 2.3.7]{EGAIII1}.
  \par Note that $\Rightarrow$ follows since
  if the composition $\delta^i_\pi$ is injective, then
  $\delta^i_f$ is injective.
  For $\Leftarrow$, 
  the maps
  \[
    H^i_{\fm}(R) \to H^i_{(f \circ g)^{-1}(\{\fm\})}
    (\tilde{W},\cO_{\tilde{W}})
  \]
  are injective for every $\tilde{W}$ as constructed above.
  We can therefore take colimits and apply
  Theorem \ref{lem:lccolimits} to see that $\delta^i_\pi$ is injective.
\end{proof}
\par We are now ready to show our characterization of rational singularities and
pseudo-rational rings.
Below, we will freely use the fact that the formation of $\ZR(X)$ commutes with
base change by quasi-compact separated \'etale morphisms
\cite[\href{https://stacks.math.columbia.edu/tag/087B}{Tag
087B}]{stacks-project}, and hence $\ZR(X) \times_X X_y \simeq
\ZR(X_y)$ where $X_y \coloneqq \Spec(\cO_{X,y})$.
\begin{theorem}\label{thm:kempflike}
  Let $X$ be a Noetherian scheme.
  Denote by $\pi\colon \ZR(X) \to X$ the canonical projection morphism from the
  Zariski--Riemann space of $X$.
  Consider the following conditions:
  \begin{enumerate}[label=$(\roman*)$,ref=\roman*]
    \item\label{thm:kempflikekovacs}
      For every point $y \in X$, setting $X_y \coloneqq
      \Spec(\cO_{X,y})$ and $\pi_y\colon \ZR(X_y) \to X_y$,
      the natural morphism $\cO_{X_y} \to \RR\pi_{y*}\cO_{\ZR(X_y)}$ admits a left
      inverse in the derived category of $\cO_{X_y}$-modules.
    \item\label{thm:kempflikepseudoratlike}
      For every point $y \in X$, setting $X_y \coloneqq
      \Spec(\cO_{X,y})$ and $\pi_y\colon \ZR(X_y) \to X_y$, the natural morphism
      \[
        H^i_{\{y\}}(X_y,\cO_{X_y})
        \xrightarrow{\delta^i_{\pi_y}}
        H^i_{\pi_y^{-1}(\{y\})}\bigl(\ZR(X_y),\cO_{\ZR(X_y)}\bigr)
      \]
      is injective for every $i$.
    \item\label{thm:kempflikepseudorat}
      $X$ is locally pseudo-rational.
    \item\label{thm:kempflikerat}
      $X$ is a locally quasi-excellent scheme of equal characteristic zero
      and has rational singularities.
  \end{enumerate}
  Then, we have the following implications:
  \[
    \begin{tikzcd}[column sep=large]
      (\ref{thm:kempflikekovacs})
      \rar[Rightarrow]
      & (\ref{thm:kempflikepseudoratlike})
      \arrow[bend right=24,Rightarrow,dashed]{l}[above]{\text{equal characteristic zero}}
      \arrow[bend right=24,Rightarrow,dashed]{r}[below]{\substack{\text{equal characteristic
      zero}\\\text{$+$ locally analytically unramified}}}
      & \lar[Rightarrow]
      (\ref{thm:kempflikepseudorat})
      \arrow[Leftrightarrow,dashed]{r}[above,yshift=7]{\substack{\text{equal characteristic
      zero}\\\text{$+$ locally quasi-excellent}}}
      & (\ref{thm:kempflikerat})
    \end{tikzcd}
  \]
\end{theorem}
\begin{proof}
  The implication $(\ref{thm:kempflikekovacs}) \Rightarrow
  (\ref{thm:kempflikepseudoratlike})$ holds because the morphisms
  $\delta^i_{\pi_y}$ also admit left inverses.\smallskip
  \par We now show $(\ref{thm:kempflikepseudorat}) \Rightarrow
  (\ref{thm:kempflikepseudoratlike})$.
  Injectivity for $i < \dim(X_y)$ holds since pseudo-rational implies
  Cohen--Macaulay (see Definition \ref{def:pseudorational}) and hence
  \[
    H^i_{\{y\}}(X_y,\cO_{X_y}) = 0
  \]
  for all $i < \dim(X_y)$.
  For $i = \dim(X_y)$, we apply Lemma \ref{lem:kempflikepseudorat}.\smallskip
  \par We now show $(\ref{thm:kempflikepseudoratlike}) \Rightarrow
  (\ref{thm:kempflikekovacs})$ in equal characteristic zero.
  By Noetherian induction,
  it suffices to show that if
  $\cO_{X_\eta} \to
  \RR\pi_{\eta*}\cO_{\ZR(X_{\eta})}$
  admits a left inverse for every
  proper generization $\eta \rightsquigarrow y$, then
  $\cO_{X_y} \to
  \RR\pi_{y*}\cO_{\ZR(X_y)}$ admits a left inverse.
  \par We first claim that for every $i$, the maps
  \begin{equation}\label{eq:lcvanisheskempf}
    H^i_{\{y\}}(X_y,\cO_{X_y})
    \overset{\delta^i_{\pi_y}}{\longrightarrow}
    H^i_{\pi_y^{-1}(\{y\})}\bigl(\ZR(X_y),\cO_{\ZR(X_y)}\bigr)
  \end{equation}
  are isomorphisms.
  For $i < \dim(X_y)$, the morphisms \eqref{eq:lcvanisheskempf} are isomorphisms
  by combining $(\ref{thm:kempflikepseudoratlike})$ and the
  fact that the modules on the right vanish by Theorem
  \ref{thm:zrmaindualvanishing}$(\ref{thm:zrmaindualvanishingkv})$ applied to
  $\sL = \cO_{X_y}$.
  Moreover, this morphism is also an isomorphism for $i = \dim(X_y)$ since it is
  injective by $(\ref{thm:kempflikepseudoratlike})$ and surjective by
  \cite[Remark $(b)$ on p.\ 103]{LT81}.
  \par Now consider the exact triangle
  \begin{align}
    \cO_{X_y} &\longrightarrow \RR \pi_{y*}\cO_{\ZR(X_y)} \longrightarrow
    C_y^\bullet \overset{w_y}{\longrightarrow} \cO_{X_y}[1].\nonumber
  \intertext{By the inductive hypothesis, we know that for every proper generization $\eta
  \rightsquigarrow y$, the composition}
    \cO_{X_\eta} &\longrightarrow \bigl(\RR
    \pi_{y*}\cO_{\ZR(X_y)}\bigr)_\eta \longrightarrow \RR
    \pi_{\eta*}\cO_{\ZR(X_\eta)}
    \label{eq:triangleslocalized}
  \end{align}
  admits a left inverse.
  We therefore see that the left map in \eqref{eq:triangleslocalized}
  also admits a left inverse.
  Next, consider commutative diagram
  \[
    \begin{tikzcd}
      \cdots \rar & \mathbf{H}^i_{\{y\}}(X_y,C_{y}^\bullet)
      \rar\dar & \mathbf{H}^i(X_y,C_{y}^\bullet) \rar\dar &
      \mathbf{H}^i\bigl(X_y - \{y\}, C_{y}^\bullet\bigr) \rar\dar & \cdots\\
      \cdots \rar & H^{i+1}_{\{y\}}(X_y,\cO_{X_y})
      \rar & H^{i+1}(X_y,\cO_{X_y}) \rar &
      H^{i+1}\bigl(X_y - \{y\}, \cO_{X_y}\bigr) \rar & \cdots
    \end{tikzcd}
  \]
  with exact rows, where the vertical maps are induced by $w_y$.
  We see that the right vertical map is $0$ by the existence of a left inverse
  in \eqref{eq:triangleslocalized} and since $w_y = 0$ if and only if the map
  $\cO_{X_y} \to \RR \pi_{y*}\cO_{\ZR(X_y)}$ admits a left inverse
  \cite[Corollary 1.2.7 and Remark
  1.2.9]{Nee01}.
  Set $d_y \coloneqq \dim(X_y)$.
  If $i \ne d_y-1$, then $H^{i+1}_{\{y\}}(X_y,\cO_{X_y}) = 0$ by
  \eqref{eq:lcvanisheskempf}.
  By the commutativity of the diagram, we see that the middle vertical map is
  also $0$ for $i \ne d_y-1$.
  If $i=d_y-1$, then $H^{d_y}(X_y,\cO_{X_y}) = 0$ since $X_y$ is affine.
  We therefore conclude that the map $w_y$
  induces the $0$ map on hypercohomology, and hence $w_y = 0$.
  By \cite[Corollary 1.2.7 and Remark 1.2.9]{Nee01} again, this shows that
  $\cO_{X_y} \to \RR \pi_{y*}\cO_{\ZR(X_y)}$ admits a left inverse, i.e.,
  $(\ref{thm:kempflikekovacs})$ holds.\smallskip
  \par We now show $(\ref{thm:kempflikepseudoratlike}) \Rightarrow
  (\ref{thm:kempflikepseudorat})$ in equal characteristic zero if the local
  rings of $X$ are analytically unramified.
  Since condition $(\ref{thm:kempflikepseudorat})$ is local, it suffices to
  prove the case when $X$ is the spectrum of a local $\QQ$-algebra.
  First, $(\ref{thm:kempflikekovacs})$ implies the injection
  $\cO_X \hookrightarrow f_*\cO_W$ splits
  for every finite birational morphism $f\colon W \to X$, and hence
  $X$ is normal (see \cite[Example 2.1]{Bha12}).
  Next, $X$ is Cohen--Macaulay since $(\ref{thm:kempflikepseudoratlike})$
  implies we have the injections
  \[
    H^i_{\{y\}}(X_y,\cO_{X_y})
    \overset{\delta^i_{\pi_y}}{\hooklongrightarrow}
    H^i_{\pi_y^{-1}(\{y\})}\bigl(\ZR(X_y),\cO_{\ZR(X_y)}\bigr) = 0
  \]
  where the vanishing holds by Theorem
  \ref{thm:zrmaindualvanishing}$(\ref{thm:zrmaindualvanishingkv})$ applied to
  $\sL = \cO_X$.
  Finally, injectivity at $i = d$ holds by Lemma \ref{lem:kempflikepseudorat}.\smallskip
  \par It remains to show $(\ref{thm:kempflikepseudorat}) \Leftrightarrow
  (\ref{thm:kempflikerat})$ in equal characteristic zero if $X$ is locally
  quasi-excellent.
  Since both conditions are local, we may replace $X$ by the spectrum of one of
  its local rings to assume that $X = \Spec(R)$ for a quasi-excellent local ring
  $(R,\fm)$.
  As in \cite[Example $(b)$ on p.\ 103]{LT81},
  the implication $(\ref{thm:kempflikerat}) \Rightarrow
  (\ref{thm:kempflikepseudorat})$ follows from
  Chow's lemma \cite[Th\'eor\`eme 5.6.1]{EGAII}
  and resolution of singularities \cite[Chapter I,
  \S3, Main Theorem I$(n)$]{Hir64}, which imply any proper birational morphism $W \to
  \Spec(R)$ can be dominated by a projective resolution of singularities $W' \to
  \Spec(R)$.
  For the converse implication $(\ref{thm:kempflikepseudorat}) \Rightarrow
  (\ref{thm:kempflikerat})$, let $f\colon W \to \Spec(R)$ be a proper resolution of
  singularities.
  Then, the injective maps
  \[
    H^i_{\{\fm\}}(X,\cO_{X})
    \overset{\delta^i_{f}}{\hooklongrightarrow}
    H^i_{f^{-1}(\{\fm\})}(W,\cO_W)
  \]
  are in fact isomorphisms, since $H^i_{f^{-1}(\{\fm\})}(W,\cO_W) = 0$ for all
  $i < \dim(X)$ by Theorem 
  \ref{thm:zrmaindualvanishing}$(\ref{thm:zrmaindualvanishingkv})$ applied to
  $\sL = \cO_{X}$.
  Moreover, this morphism is also an isomorphism for $i = \dim(X)$ since it is
  injective by $(\ref{thm:kempflikepseudoratlike})$ and surjective by
  \cite[Remark $(b)$ on p.\ 103]{LT81}.
  The claim now follows from local-global duality (Lemma \ref{lem:lipmandual}).
\end{proof}
Since regular local rings are pseudo-rational \cite[\S4]{LT81}, we see that
$(\ref{thm:kempflikekovacs})$ holds for regular rings in equal characteristic zero.
For excellent schemes, we can prove the stronger statement that
$R^i\pi_*\cO_{\ZR(X)} = 0$ for all $i > 0$ in both equal characteristic zero and
for varieties of dimension
$\le4$ over algebraically closed fields of arbitrary characteristic using
a recent result of Kov\'acs \cite[Theorem 8.6]{Kov}.
The result below will not be used in the sequel.
\begin{theorem}\label{thm:relativevanishingzr}
  Let $X$ be a locally pseudo-rational excellent Noetherian scheme.
  Assume that $X$ is of equal characteristic zero or a quasi-projective
  variety of dimension
  $\le4$ over an algebraically closed field of arbitrary characteristic.
  Denote by
  $\pi\colon \ZR(X) \to X$ the canonical projection morphism from the
  Zariski--Riemann space of $X$.
  Then, we have
  \[
    R^i\pi_*\cO_{\ZR(X)} = 0
  \]
  for all $i > 0$.
\end{theorem}
\begin{proof}
  By Theorem \ref{thm:fklimitsdirectimage}, it
  suffices to show that for a cofinal subset of ideals $\{\sI_\lambda\}_{\lambda
  \in \Lambda} \subseteq \AId_X$, the blowup
  $\pi_{\sI_\lambda}\colon X_{\sI_\lambda} \to X$
  along $\sI_\lambda$ satisfies
  $R^i\pi_{\sI_\lambda*}\cO_{X_{\sI_\lambda}} = 0$
  for all $i > 0$.
  \par Let $X_\sI \to X$ be a blowup where $\sI \in \AId_X$.
  Since $X_\sI$ is an excellent scheme in the equal characteristic zero case
  \cite[Proposition 7.8.6$(i)$]{EGAIV2} or a quasi-projective variety in the
  dimension $\le4$ case, there exists a birational blowup $\tilde{X} \to
  X_\sI$ such that $\tilde{X}$ is regular in the equal characteristic zero
  case \cite[Theorem 1.1]{Tem08} or such that $\tilde{X}$ is normal and
  Cohen--Macaulay in the dimension $\le4$ case \cite[Corollary 1.8]{Bro83}.
  By \cite[Premi\`ere partie, Lemme 5.1.4]{RG71} (see also \cite[Lemma 1.2]{Con07}), the
  composition
  \[
    \tilde{\pi}\colon \tilde{X} \longrightarrow X_\sI \longrightarrow X
  \]
  can be written as the blowup of $X$ along a coherent ideal sheaf $\sI_\lambda
  \in \AId_X$ contained in $\sI$.
  We have $R^i\tilde{\pi}_*\cO_{\tilde{X}} = 0$ by \cite[Theorem 8.6]{Kov}.
\end{proof}
\begin{remark}
  Only the vanishing statements proved so far are used in the proof
  of Theorem \ref{thm:boutot}.
  If one is interested in the proof of Theorem \ref{thm:boutot}, they can
  proceed directly to reading \S\ref{sect:boutot}.
\end{remark}

\section{Relative vanishing and injectivity theorems for schemes}
We are now ready to prove the following dual version of Theorem
\ref{thm:mainvanishing}.
Theorem \ref{thm:maindualvanishing} below implies Theorem
\ref{thm:mainvanishing} by Proposition \ref{prop:aisastar}.
\begin{customthm}{\ref*{thm:mainvanishing}*}\label{thm:maindualvanishing}
  Let $f\colon X \to Y$ be a proper maximally dominating morphism of
  locally Noetherian schemes of equal characteristic zero such that $X$ is
  locally pseudo-rational.
  \par Consider an invertible sheaf $\sL$ on $X$.
  For every $y \in Y$, denote by $f_y\colon X_y \to \Spec(\cO_{Y,y})$ the base
  change of $f$ along $\Spec(\cO_{Y,y}) \to Y$, denote by $\sL_y$ the pullback
  of $\sL$ to $X_y$, and set $Z_y = f_y^{-1}(y)$.
  \begin{enumerate}[label=$(\roman*)$,ref=\roman*]
    \item\label{thm:maindualvanishingkv}
      Suppose $\sL$ is $f$-big and $f$-semi-ample.
      Then, we have
      \[
      H^i_{Z_y}(X_y,\sL_y^{-1}) = 0
      \]
      for every $y \in Y$ and for all $i < \dim(X_y)$.
    \item\label{thm:maindualvanishinggr}
      Suppose $\sL$ is $f$-semi-ample.
      Let $D$ be an effective Weil divisor on $X$ 
      for which there exists an integer $n > 0$ and
      an effective Weil divisor $D'$ such that $\cO_X(D+D')
      \simeq \sL^{\otimes n}$.
      Then, the canonical morphisms
      \[
        H^i_{Z_y}\bigl(X_y,\sL_y^{-k}(-D_y)\bigr) \longrightarrow
        H^i_{Z_y}(X_y,\sL_y^{-k})
      \]
      induced by the inclusion $\cO_{X_y}(-D_y) \hookrightarrow \cO_{X_y}$ are
      surjective for every $y \in Y$ for all $i$ and for all $k > 0$, where $D_y$ is the
      pullback of $D$ to $X_y$.
  \end{enumerate}
\end{customthm}
\begin{proof}
  For $(\ref{thm:maindualvanishinggr})$,
  since the map $\cO_X \to \cO_X(D)$ factors the map $\cO_X \to \cO_X(D+D')$, we
  can replace $D$ by $D+D'$ to assume that $\cO_X(D) \simeq \sL^{\otimes n}$,
  and in particular, we may assume that $D$ is Cartier.
  \par We proceed in a sequence of steps.
  \begin{step}\label{step:integral}
    Reduction to the case when $X$ and $Y$ are integral.
  \end{step}
  Consider the Stein factorization
  \[
    X \overset{f'}{\longrightarrow} Y' \overset{g}{\longrightarrow}
    Y
  \]
  of $f$.
  By Incomparability \cite[Theorem 9.3$(ii)$]{Mat89} applied to the finite
  morphism $g$, the points in $Y'$ lying over maximal points of
  $Y$ must be maximal, and hence $f'$ is maximally
  dominating.
  Since $Y'$ is normal, it is the disjoint union of integral normal schemes.
  We claim we may work with one connected component of $X$ and $Y'$
  at a time to assume that $f$ is a surjective morphism of integral schemes.
  For each $y \in Y$, we have a decomposition
  \begin{align}
    f_y^{-1}\bigl(\{y\}\bigr) &= \bigsqcup_{y' \in g^{-1}(\{y\})}
    f_{y'}^{\prime-1}\bigl(\{y'\}\bigr)\label{eq:decompforintegral}
    \intertext{into connected components by \cite[Corollaire 4.3.3]{EGAIII1}.
    By the Mayer--Vietoris sequence (see \cite[Proof of Proposition 4.2]{Har75}),
    we then have a decomposition of functors}
    H^i_{Z_y}(X_y,-) &\simeq \bigoplus_{y' \in g^{-1}(\{y\})}
    H^i_{Z_{y'}}(X_{y'},-)\nonumber
  \end{align}
  where $X_{y'} \coloneqq X \times_{Y'} \Spec(\cO_{Y',y'})$ and $Z_{y'} =
  f_{y'}^{\prime-1}(\{y'\})$.
  Since each $f_{y'}^{\prime-1}(\{y'\})$ lies in a unique connected component of
  $X \times_{Y'} \Spec(\cO_{Y',y'})$, we can use Excision \cite[Proposition
  1.3]{Gro67} to replace $X$ and $Y$ by the
  connected components of $X$ and $Y'$ to assume that both $X$ and $Y$ are integral.
  We note that $\sL$ is $f'$-semi-ample by \cite[Lemma 2.10]{CT20}, and in case
  $(\ref{thm:maindualvanishingkv})$ is $f'$-big since the decomposition
  \eqref{eq:decompforintegral} also holds for maximal points $y \in Y$.
  \begin{step}
    Conclusion of proof.
  \end{step}
  By Step \ref{step:integral}, we may assume that $X$ and $Y$ are integral.
  Since the desired vanishing is a local condition, we may replace $Y$ by
  $\Spec(\cO_{Y,y})$ and $X$ by $X_y$.
  Denote by $\pi\colon \ZR(X) \to X$ the canonical projection morphism from the
  Zariski--Riemann space of $X$.
  By Theorem \ref{thm:zrmaindualvanishing}, we know that
  \[
    H^i_{\pi^{-1}(Z)}\bigl(\ZR(X),\pi^*\sL^{-1}\bigr) = 0
  \]
  for all $i < \dim(X)$ and that
  \[
    H^i_{\pi^{-1}(Z)}\bigl(\ZR(X),\pi^*\bigl(\sL^{-k}(-D)\bigr)\bigr)
    \longrightarrow H^i_{\pi^{-1}(Z)}\bigl(\ZR(X),\pi^*\sL^{-k}\bigr)
  \]
  is surjective for all $i$, respectively.
  Next, since $\cO_X \to \RR\pi_*\cO_{\ZR(X)}$ admits a left inverse by
  Theorem
  \ref{thm:kempflike}, the projection formula \cite[Chapitre 0, Proposition
  12.2.3]{EGAIII1}
  implies the desired vanishing and surjectivity on
  $X$.
\end{proof}
\section{Relative vanishing and injectivity theorems for klt pairs}\label{sect:kvvanishing}
Our goal in this section is to deduce our version of the Kawamata--Viehweg
vanishing theorem and Koll\'ar's injectivity theorem for klt pairs
(Theorem \ref{thm:kvvanishing}) from
Theorem \ref{thm:mainvanishing}.
To do so, we establish a covering lemma in \S\ref{sect:coveringlemma}.
We then prove the dual version of Theorem \ref{thm:kvvanishing} for normal
crossings pairs (Theorem \ref{thm:kvvanishingdual}) in
\S\ref{sect:kvvanishingproof}, and then prove Theorem \ref{thm:kvvanishing}
itself in \S\ref{sect:proofoftheorema}.
\subsection{A covering lemma}\label{sect:coveringlemma}
We start with the following version of Kawamata's covering lemma
(cf.\ \cite[Theorem 17]{Kaw81}).
\begin{lemma}[(cf.\ {\cite[Lemma 3.19]{EV92}})]\label{lem:evkawamatacover}
  Let $X$ be an integral regular scheme projective over an integral Noetherian local
  $\QQ$-algebra $(R,\fm)$.
  Let
  \[
    D = \sum_{j=1}^r D_j
  \]
  be a reduced simple normal crossings divisor and let $N_1,N_2,\ldots,N_r$ be
  positive integers.
  Then, there exists a finite surjective morphism $\tau \colon W\to X$ from a
  regular integral scheme $W$ such that
  \begin{enumerate}[label=$(\alph*)$,ref=\alph*]
    \item\label{lem:ev319a}
      We have $\tau^*D_j = N_j \cdot (\tau^*D_j)_\red$ for every $j \in
      \{1,2,\ldots,r\}$.
    \item\label{lem:ev319b} $\tau^*D$ is a simple normal crossings divisor.
    \item\label{lem:ev319c}
      The degree of $\tau$ divides some power of $\prod_{j=1}^r N_j$.
  \end{enumerate}
\end{lemma}
\begin{proof}
  We adapt the proof of \cite[Lemma 3.19]{EV92}.
  Write
  \[
    D = \sum_{j=1}^r D_j
  \]
  as a sum of regular divisors that are possibly disconnected.
  We construct $W$ inductively.
  It therefore suffices to prove the case when $N_1 > 1$ and $N_2 = N_3 = \cdots
  = N_r = 1$.
  \par Let $f\colon X \to \Spec(R)$ be the structure morphism for $X$.
  Let $\sA'$ be an $f$-very ample invertible sheaf on $X$, which exists since
  $f$ is projective.
  Then, there exists an integer $n > 0$ such that $\sA^{\prime\otimes
  n}(-D_1)$ is $f$-generated by \cite[Proposition 2.6.8$(i)$]{EGAII}.
  Letting $m > 0$ be an integer such that $N_1 \mid (n+m)$ and setting
  \[
    \sA \coloneqq \sA^{\prime\otimes(n+m)/N_1},
  \]
  we see that $\sA^{\otimes
  N_1}(-D_1)$ is $f$-very ample by \cite[Proposition 4.4.8]{EGAII}.
  \par We now construct divisors $H_1,H_2,\ldots,H_{\dim(X)}$ as follows.
  Since $\sA^{\otimes N_1}(-D_1)$ is $f$-very ample, there is a closed
  immersion $i\colon X \hookrightarrow \PP^N_R$ over $R$ such that
  $i^*\cO(1) \simeq \sA^{\otimes N_1}(-D_1)$.
  We now choose $\dim(X)$ general irreducible divisors
  \[
    H_1,H_2,\ldots,H_{\dim(X)} \in \bigl\lvert \sA^{\otimes N_1}(-D_1) \bigr\rvert
  \]
  as the vanishing of sections in $H^0(\PP^N_R,\cO(1))$ such that $D+\sum_i H_i$
  is a simple normal crossings divisor using the
  Bertini theorem in \cite[Theorem 2.17 and Remark 2.18]{BMPSTWW}.
  Since there are $\dim(X)+1$ divisors in the set
  $\{H_1,H_2,\ldots,H_{\dim(X)},D_1\}$, we know that
  \begin{equation}\label{eq:dimxplus1}
    \Biggl(\,\bigcap_{i=1}^{\dim(X)} H_i\Biggr) \cap D_1
    = \emptyset.
  \end{equation}
  \par Let $\tau_i\colon W_i \to X$ be the cyclic cover obtained by taking the $N_1$-th
  root out of $H_i+D_1$, which satisfies $\cO_X(H_i+D_1) \simeq \sA^{\otimes
  N_1}$ by \cite[(3.5)]{EV92}.
  Then, $(\ref{lem:ev319a})$ and $(\ref{lem:ev319c})$ are satisfied by
  construction, but $W_i$ may be
  singular over $H_i \cap D_1$ and $\tau_i^*(D)$ may not have simple normal
  crossings over $H_i \cap D_1$.
  To fix this, let $W$ be the normalization of
  \[
    W_1 \times_X W_2 \times_X \cdots \times_X W_{\dim(X)}.
  \]
  To show that $W$ is regular and that the pullback of $D$ to $W$ is a simple
  normal crossings divisor, we describe an alternative inductive
  construction of $W$.
  Let $W^{(\nu)}$ be the normalization of
  \[
    W_1 \times_X W_2 \times_X \cdots \times_X W_\nu
  \]
  and consider the composition
  \[
    \tau^{(\nu)}\colon W^{(\nu)} \longrightarrow W_1 \times_X W_1 \times_X
    \cdots \times_X W_\nu \longrightarrow X.
  \]
  Outside of the singular locus of $W^{(\nu)}$, we see that $W^{(\nu+1)}$ is
  obtained from $W^{(\nu)}$ by taking the $N_1$-th root out of
  \[
    \tau^{(\nu)*}(H_{\nu+1}+D_1) = \tau^{(\nu)*}(H_{\nu+1}) + N_1 \cdot
    (\tau^{(\nu)*}D_1)_\red.
  \]
  This is the same as taking the $N_1$-th root out of $\tau^{(\nu)*}(H_{\nu+1})$
  by \cite[Remark 3.3$(b)$ and Corollary 3.11]{EV92}.
  Since $\tau^{(\nu)*}(H_{\nu+1})$ has no singularities, \cite[Lemma 3.15]{EV92}
  implies the singularities of $W^{(\nu+1)}$ lie over the singularities of
  $W^{(\nu)}$, and hence inductively over $H_1 \cap D_1$.
  However, since $W$ is independent of the numbering of the $H_i$, the
  singularities of $W$ in fact lie over
  \[
    \bigcap_{i=1}^{\dim(X)} (H_i \cap D_1) = \Biggl(\,\bigcap_{i=1}^{\dim(X)} H_i\Biggr)
    \cap D_1 = \emptyset
  \]
  by the choice of the $H_i$ in \eqref{eq:dimxplus1}.
\end{proof}
\subsection{The Kawamata--Viehweg vanishing theorem and Koll\'ar's injectivity
theorem for normal crossings pairs}\label{sect:kvvanishingproof}
Before proving Theorem \ref{thm:kvvanishing}, we first
prove the Kawamata--Viehweg vanishing theorem and Koll\'ar's injectivity theorem
for normal crossings divisors on regular schemes of equal characteristic zero.
We follow the proof in
\cite[Theorem 2.64]{KM98}.\medskip
\par In the statement below, $(\ref{thm:kvvanishingdualvan})$ is a generalization of
\cite[Corollary 20]{Kol11}, and $(\ref{thm:kvvanishingdualinj})$ is a
(dual) version of \citeleft\citen{Kaw85}\citemid Theorem
3.2\citepunct \citen{EV87}\citemid Corollaire 1.11\citeright.
Theorem \ref{thm:kvvanishingdual} below implies the regular case of Theorem
\ref{thm:kvvanishing} by Proposition \ref{prop:aisastar}.
\begin{theorem}
  \label{thm:kvvanishingdual}
  Let $f\colon X \to Y$ be a proper maximally dominating morphism of locally
  Noetherian schemes of equal characteristic zero such that $X$ is regular.
  \par Let $\Delta$ be a $\QQ$-divisor on $X$ with normal crossings
  support such that $\lfloor \Delta \rfloor = 0$.
  Consider a divisor $L$ on $X$ such that $L \sim_\QQ M
  + \Delta$ for a $\QQ$-divisor $M$ on $X$.
  For every $y \in Y$, denote by $f_y\colon X_y \to \Spec(\cO_{Y,y})$ the base
  change of $f$ along $\Spec(\cO_{Y,y}) \to Y$, denote by $L_y$ the pullback
  of $L$ to $X_y$, and set $Z_y = f_y^{-1}(y)$.
  \begin{enumerate}[label=$(\roman*)$,ref=\roman*]
    \item\label{thm:kvvanishingdualvan} Suppose $M$ is $f$-nef and $f$-big.
      Then, we have
      \[
        H^i_{Z_y}\bigl(X_y,\cO_{X_y}(-L_y)\bigr) = 0
      \]
      for every $y \in Y$ and for all $i < \dim(X_y)$.
    \item\label{thm:kvvanishingdualinj} Suppose $M$ is $f$-semi-ample.
      Let $D$ be an effective divisor on $X$ for which there exists an integer
      $n > 0$ such that $nM$ is Cartier and an
      effective divisor $D'$ on $X$ such that $\cO_X(D+D')
      \simeq \cO_X(nM)$.
      Then,
      the canonical morphisms
      \[
        H^i_{Z_y}\bigl(X_y,\cO_{X_y}(-L_y-D_y)\bigr) \longrightarrow
        H^i_{Z_y}\bigl(X_y,\cO_{X_y}(-L_y)\bigr)
      \]
      induced by the inclusion $\cO_X \hookrightarrow \cO_X(D)$ are surjective
      for every $y \in Y$ for all $i$, where $D_y$ is the
      pullback of $D$ to $X_y$.
  \end{enumerate}
\end{theorem}
\begin{proof}
  We proceed in a sequence of steps.
  \begin{step}\label{step:kvvanishingexcellent}
    It suffices to show that when $f$ is surjective, $X$ is integral, and $Y =
    \Spec(R)$ for an excellent local domain $(R,\fm)$ with a dualizing complex
    $\omega_R^\bullet$,
    setting $Z = f^{-1}(\{\fm\})$, we have
    \[
      H^i_Z\bigl(X,\cO_{X}(-L)\bigr) = 0
    \]
    for all $i < \dim(X)$ for $(\ref{thm:kvvanishingdualvan})$ and
    \[
      H^i_{Z}\bigl(X,\cO_{X}(-L-D)\bigr) \longrightarrow
      H^i_{Z}\bigl(X,\cO_{X}(-L)\bigr)
    \]
    are surjective for all $i$ for
    $(\ref{thm:kvvanishingdualinj})$.
  \end{step}
  The desired vanishing and surjectivity are local conditions,
  and hence we can fix $y \in Y$.
  We first claim we may replace $f$ by its base change $\hat{f}_y\colon
  \hat{X}_y \to \Spec(\hat{\cO}_{Y,y})$ along the morphism
  $\Spec(\hat{\cO}_{Y,y}) \to Y$ for each $y \in Y$.
  Note that
  the pullback $\hat{\Delta}_y$ of $\Delta_y$ to
  $\hat{X}_y$ satisfies $\lfloor \hat{\Delta} \rfloor = 0$ and has
  normal crossings support
  by applying
  \cite[Lemme 7.9.3.1]{EGAIV2} to each stratum of an \'etale cover of
  $(X,\Delta)$ where the pullback of $\Delta$ has simple normal crossings
  support.
  Here we note that maximal ideals extend to maximal ideals for \'etale local
  maps of local rings \cite[Th\'eor\`eme 17.6.1]{EGAIV4},
  and hence taking \'etale covers is compatible with completion.
  The pullback of $M$ to $\hat{X}_y$ is $\hat{f}_y$-nef \cite[Lemma
  2.20(1)]{Kee03} for $(\ref{thm:kvvanishingdualvan})$ and
  $\hat{f}_y$-semi-ample \cite[Lemma 2.12]{CT20} for
  $(\ref{thm:kvvanishingdualinj})$.
  The morphism $\hat{f}_y$ is proper and maximally dominating by flat base
  change \cite[Expos\'e II, Proposition 1.1.5]{ILO14}, the ring
  $\hat{\cO}_{Y,y}$ is excellent by \cite[Scholie 7.8.3$(iii)$]{EGAIV2}, and
  the vanishing on local cohomology for $f$ descends
  from that on $\hat{f}$ by faithfully flat base change \cite[Theorem
  6.10]{HO08}.
  \par To show the pullback of $M$ to $\hat{X}_y$ is $\hat{f}_y$-big for
  $(\ref{thm:kvvanishingdualvan})$, let $n > 0$ be
  an integer such that $nM$ induces a generically finite morphism on every
  generic fiber of $f$.
  Then, by transitivity of fibers \cite[Corollaire 3.4.9]{EGAInew}, the
  compatibility of maps induced by linear systems and flat base change
  \cite[(4.2.10)]{EGAII}, and
  the fact that generically finite maps are stable under flat base change
  (combine \cite[Expos\'e II, Proposition 1.1.5]{ILO14} and the characterization
  of generically finite morphisms in \cite[Expos\'e II, Proposition
  1.1.7]{ILO14}), we see that the pullback of $nM$ to $\hat{X}_y$ induces
  generically finite morphisms along the generic fibers of
  $\hat{f}_y$.
  \par Finally, we repeat the argument of Step
  \ref{step:integral} of the proof of Theorem \ref{thm:maindualvanishing} to
  reduce to the case when $X$ and $Y$ are integral.
  Note that the excellence of $\cO_{Y,y}$ is not lost by 
  \cite[Scholie 7.8.3]{EGAIV2}.
  \begin{step}\label{step:kvvanishingsncfbigsa}
    The vanishing (resp.\ surjectivity)
    in Step \ref{step:kvvanishingexcellent} holds when
    $\Delta$ has simple normal crossings support and
    $M$ is $f$-big and $f$-semi-ample (resp.\ $f$-semi-ample).
  \end{step}
  By Theorem \ref{thm:maindualvanishing}, it
  suffices to reduce to the case when $\Delta = 0$.
  The idea is to induce on the number of components in $\Delta$, which we do by
  showing the following more general result:
  \begin{claim}\label{claim:km265}
    Let $X$ be an integral regular scheme projective over an integral Noetherian local
    $\QQ$-algebra $(R,\fm)$.
    Let $L$ be a Cartier divisor on $X$ such that
    \[
      L \sim_\QQ M + \sum_{j=1}^r a_jD_j,
    \]
    where the $D_j$ are regular (possibly disconnected)
    divisors, $\sum_j D_j$ is a simple normal crossings divisor, and the
    $a_j$ are rational numbers in $[0,1)$.
    Then, there is a finite surjective morphism $p\colon W \to X$ from a regular
    integral scheme $W$ and a divisor $M_W$ on $W$ such that $M_W \sim_\QQ
    p^*M$ and such that $\cO_X(-L)$ is a direct summand of $p_*\cO_W(-M_W)$.
  \end{claim}
  \begin{proof}[Proof of Claim \ref{claim:km265}]
    We proceed by induction on $r$.
    Write $a_1 = b/m$, where $m$ is a positive integer.
    By Lemma \ref{lem:evkawamatacover} applied to $\sum_j D_j$, $N_1 = m$, and
    $N_2 = N_3 = \cdots = N_r = 1$, there exists a finite surjective morphism
    $p_1\colon X_1 \to X$ such that $p_1^*D_1 \sim mD'$ for some divisor $D'$ on
    $X_1$.
    Moreover, each $p_1^*D_j$ is regular and $\sum_j p_1^*D_j$ is a simple
    normal crossings divisor.
    By \cite[Theorem 2.64, Step 1]{KM98} (see also \cite[Corollary 3.11]{EV92}),
    the canonical morphism $\cO_X \to
    p_{1*}\cO_{X_1}$ splits, and hence $\cO_X(-L) \to p_{1*}\cO_{X_1}(-p_1^*L)$ also
    splits.
    \par Now $D_1$ corresponds to a section of $\cO_{X_1}(mD')$, and hence we can
    take the associated $m$-th cyclic cover $p_2\colon X_2 \to X_1$ as in
    \cite[Definition 2.50]{KM98} (see also \cite[(3.5)]{EV92}).
    Then, \cite[Lemma 2.51]{KM98} (see also \cite[Lemma 3.15$(b)$]{EV92})
    implies that $X_2$ is regular, the $p_2^*p_1^*D_j$
    are regular, and $\sum_{j > 1} p_2^*p_1^*D_j$ is a simple normal crossings divisor.
    We have the decompositions
    \begin{align*}
      p_{2*}\cO_{X_2} &= \bigoplus_{\ell=0}^{m-1}\cO_{X_1}(-\ell D'),\\
      p_{2*}\cO_{X_2}(-p_2^*p_1^*L+b\,p_2^*D')
      &=
      \bigoplus_{\ell=0}^{m-1}
      \cO_{X_1}\bigl(-p_1^*L+
      (b-\ell)D'\bigr).
    \end{align*}
    The $\ell = b$ summand shows that $\cO_{X_1}(-p_1^*L)$ is a direct summand
    of $p_{2*}\cO_{X_2}(-p_2^*p_1^*L+b\,p_2^*D')$.
    \par We now have the $\QQ$-linear equivalence
    \[
      p_2^*p_1^*L-b\,p_2^*D' \sim_\QQ p_2^*p_1^*M + \sum_{j = 2}^r a_j\,p_2^*p_1^*D_j,
    \]
    which satisfies the hypotheses of Claim \ref{claim:km265}.
    By the inductive hypothesis, there exists a finite surjective morphism $W
    \to X_2$ satisfying the conclusion of Claim \ref{claim:km265} for $X_2$.
    The composition $W \to X_2 \to X$ then satisfies the conclusion of Claim
    \ref{claim:km265} for $X$.
  \end{proof}
  We now apply the finite surjective morphism $p\colon W \to X$ constructed in Claim
  \ref{claim:km265} to prove
  the special case in Step \ref{step:kvvanishingsncfbigsa}.
  \par To prove the vanishing in Step \ref{step:kvvanishingsncfbigsa}, we have an injection
  \[
    H^i_Z\bigl(X,\cO_X(-L)\bigr) \hooklongrightarrow
    H^i_{p^{-1}(Z)}\bigl(W,\cO_W(-M_W)\bigr).
  \]
  Since $M_W \sim_\QQ p^*M$, it is $(f \circ p)$-big (by \cite[Lemma 5.10]{LM}) and
  $(f \circ p)$-semi-ample (by \cite[Lemma 2.11$(i)$]{CT20}).
  Since regular local rings are pseudo-rational \cite[\S4]{LT81}, we see
  the right-hand side vanishes by Theorem
  \ref{thm:maindualvanishing}$(\ref{thm:maindualvanishingkv})$.
  \par To prove the surjectivity in Step \ref{step:kvvanishingsncfbigsa}, we
  have the commutative diagram
  \[
    \begin{tikzcd}
      H^i_Z\bigl(X,\cO_X(-L-D)\bigr) \rar\dar[hook] &
      H^i_Z\bigl(X,\cO_X(-L)\bigr) \dar[hook]\\
      H^i_{p^{-1}(Z)}\bigl(W,\cO_W(-M_W-p^*D)\bigr)
      \rar\arrow[twoheadrightarrow,bend right=30]{u} &
      H^i_{p^{-1}(Z)}\bigl(W,\cO_W(-M_W)\bigr)
      \arrow[twoheadrightarrow,bend right=30]{u}
    \end{tikzcd}
  \]
  where the surjective arrows pointing upwards
  are induced by the projection coming from the split
  injection in Claim \ref{claim:km265}.
  Since $M_W \sim_\QQ p^*M$, it is $f$-semi-ample by \cite[Lemma
  2.11$(i)$]{CT20}.
  Since regular local rings are pseudo-rational \cite[\S4]{LT81}, we see
  the bottom horizontal arrow is surjective by Theorem
  \ref{thm:maindualvanishing}$(\ref{thm:maindualvanishinggr})$.
  The commutativity of the diagram implies the top horizontal arrow is also
  surjective.
  \begin{step}\label{step:kvvanishingdualvanconcl}
    Conclusion of proof for $(\ref{thm:kvvanishingdualvan})$.
  \end{step}
  We first find a log resolution $g\colon \tilde{X} \to X$ on which we can write
  \[
    g^*M \sim_\QQ A + G
  \]
  where $A$ is an $(f \circ g)$-ample $\QQ$-divisor on $\tilde{X}$ and $G$ is an effective
  $\QQ$-divisor on $\tilde{X}$ such that $G\cup \Exc(g) \cup g_*^{-1}\Delta$
  has simple normal crossings support and the coefficients on $G$ are arbitrarily small.
  Applying Chow's lemma \cite[Th\'eor\`eme 5.6.1]{EGAII} and then taking a log
  resolution using \cite[Chapter I, \S3, Main Theorem I$(n)$]{Hir64}, we can
  find a projective log resolution $g_1\colon X_1 \to X$ of the pair $(X,\Delta)$
  such that $f \circ g_1$ is projective.
  Then, we know that $g_1^*M$ is $(f \circ g_1)$-big (by \cite[Lemma 5.10]{LM})
  and $(f \circ g_1)$-nef (by \cite[Lemma 2.17(1)]{Kee03}).
  By Kodaira's lemma \cite[Corollary 5.9]{LM}, we can write $g_1^*M \sim_\QQ A +
  E$ where $A$ is an $(f \circ g_1)$-ample $\QQ$-divisor and $E$ is an effective
  $\QQ$-divisor.
  Let $g_2\colon \tilde{X} \to X_1$ be a log resolution of $(X_1,E+g_1^*\Delta)$ and
  consider the composition
  \[
    g\colon \tilde{X} \overset{g_2}{\longrightarrow} X_1 \overset{g_1}{\longrightarrow} X.
  \]
  Then, we have $g_2^*M \sim_\QQ g_2^*A + g_2^*E$ and $g_2^*E \cup \Exc(g) \cup
  g_*^{-1}\Delta$ has simple normal crossings support.
  Since $g_2$ is constructed as a blowup of $X_1$ along regular centers, there
  exists an effective $g_2$-exceptional $\QQ$-divisor $F$ such that $-F$ is
  $g$-ample.
  After possibly replacing $F$ by a small rational multiple, we therefore see
  that $g^*M - F$ is $(f \circ g)$-ample by \cite[Proposition
  4.6.13$(ii)$]{EGAII} and $g_2^*E \cup F \cup \Exc(g) \cup g_*^{-1}(\Delta)$ is a
  $\QQ$-divisor with simple normal crossings support.
  Finally, for every integer $k \ge 0$, we can write
  \[
    g^*M \sim_\QQ \frac{1}{k+1}(k\,g^*M+g^*M - F) + \frac{1}{k+1}F
  \]
  where $k\,g^*M+g^*M - F$ is $(f \circ g)$-ample by the openness of the relative
  ample cone \citeleft\citen{Kee03}\citemid Theorem 3.9\citepunct
  \citen{Kee18}\citemid Theorem E2.2\citeright.
  By taking $k$ large, we can therefore set $A = \frac{1}{k+1}(k\,g^*M+g^*M - F)$
  and $G = \frac{1}{k+1}F$ to assume that the coefficients on $G$ are
  arbitrarily small.
  \par Now
  let $\omega_X$ and $\omega_{\tilde{X}}$ be canonical sheaves constructed by
  taking the unique cohomology sheaves of $f^!\omega_R^\bullet$ and $(f \circ
  g)^!\omega_R^\bullet$, respectively, and let $K_X$ and $K_{\tilde{X}}$ be
  associated canonical divisors (see \cite[Definition 6.2]{LM}).
  We need to show that
  \[
    H^i_Z\bigl(X,\cO_X(-L)\bigr) = 0
  \]
  for all $i < \dim(X)$,
  which by
  Proposition \ref{prop:aisastar} is implied by
  \[
    R^if_*\bigl(\omega_X(L)\bigr) = 0
  \]
  for all $i > 0$.
  \par As in \cite[Notation 2.6]{Kol13}, write
  \[
    K_{\tilde{X}} + g_*^{-1}\Delta \sim_\QQ g^*(K_X+\Delta) + \sum_i b_iG_i
  \]
  where the coefficients $b_i \in \QQ$ satisfy $b_i > -1$ for all $i$ since the
  pair $(X,\Delta)$ is klt \cite[Corollary 2.13 and Proposition 2.15]{Kol13}
  and the $G_i$ are $g$-exceptional.
  We then have
  \begin{align*}
    K_{\tilde{X}} + A  + g_*^{-1}\Delta+ G+\sum_i\bigl(\lceil b_i \rceil - b_i\bigr)G_i
    &\sim_\QQ
    g^*(K_X+\Delta) + A + G+\sum_i\lceil b_i \rceil G_i\\
    &\sim_\QQ g^*K_X + g^*L + \sum_i\lceil b_i \rceil G_i.
  \end{align*}
  We therefore see that the divisor
  \begin{align*}
    \tilde{L} &\coloneqq \bigl(g^*K_X-K_{\tilde{X}}\bigr)
    + g^*L + \sum_i\lceil b_i \rceil G_i
    \intertext{satisfies}
    \tilde{L} &\sim_\QQ A + g_*^{-1}\Delta + G+\sum_i\bigl(\lceil b_i \rceil -
    b_i\bigr)G_i
  \end{align*}
  which is the sum of the $(f \circ g)$-ample $\QQ$-divisor $A$ and an effective
  $\QQ$-divisor with simple normal crossings support and coefficients in
  $[0,1)$.
  Since $A$ is also $g$-ample by \cite[Proposition 4.6.13$(v)$]{EGAII},
  Step \ref{step:kvvanishingsncfbigsa} and
  Proposition \ref{prop:aisastar}$(\ref{prop:aisastarkv})$ imply
  \begin{equation}\label{eq:vanishingbystep2}
    R^i(f \circ g)_*\bigl(\omega_{\tilde{X}}(\tilde{L})\bigr) = 0
    \qquad \text{and} \qquad
    R^ig_*\bigl(\omega_{\tilde{X}}(\tilde{L})\bigr) = 0
  \end{equation}
  for all $i > 0$.
  \par We are now ready to show the theorem in case $(\ref{thm:kvvanishingdualvan})$.
  Consider the Leray spectral sequence
  \[
    E_2^{i,j} = R^if_*\bigl(R^jg_*\bigl(\omega_{\tilde{X}}(\tilde{L})\bigr)
    \Rightarrow R^{i+j}(f \circ g)_*\bigl(\omega_{\tilde{X}}(\tilde{L})\bigr).
  \]
  By \eqref{eq:vanishingbystep2}, the $E_2$ page of this spectral sequence
  is concentrated in the row $j = 0$.
  We therefore have an isomorphism
  \[
    R^if_*\bigl(g_*\bigl(\omega_{\tilde{X}}(\tilde{L})\bigr)\bigr) \simeq
    R^i(f \circ g)_*\bigl(\omega_{\tilde{X}}(\tilde{L})\bigr) = 0
  \]
  for all $i > 0$ using \eqref{eq:vanishingbystep2} for the vanishing on
  the right-hand side.
  Now by definition of $\tilde{L}$ and the projection formula \cite[Chapitre 0,
  Proposition 12.2.3]{EGAIII1}, we have
  \[
    g_*\bigl(\omega_{\tilde{X}}(\tilde{L})\bigr) \simeq \omega_X(L)
    \otimes_{\cO_X} g_*\biggl(\cO_{\tilde{X}}\biggl(\sum_i\lceil b_i \rceil
    G_i\biggr)\biggr) \simeq \omega_X(L)
  \]
  since the $G_i$ are $g$-exceptional (see \cite[Example 2.1.16]{Laz04a}).
  We therefore have
  \[
    R^if_*\bigl(\omega_X(L)\bigr) \simeq
    R^if_*\bigl(g_*\bigl(\omega_{\tilde{X}}(\tilde{L})\bigr)\bigr) = 0.
  \]
  \begin{step}
    Conclusion of proof for $(\ref{thm:kvvanishingdualinj})$.
  \end{step}
  Let $g\colon \tilde{X} \to X$ be a log resolution of $(X,\Delta)$.
  As in Step \ref{step:kvvanishingdualvanconcl}, the divisor
  \begin{align*}
    \tilde{L} &\coloneqq \bigl(g^*K_X-K_{\tilde{X}}\bigr)
    + g^*L + \sum_i\lceil b_i \rceil G_i
    \intertext{satisfies}
    \tilde{L} &\sim_\QQ g^*M + g_*^{-1}\Delta + \sum_i\bigl(\lceil b_i \rceil -
    b_i\bigr)G_i
  \end{align*}
  which is the sum of the $\QQ$-divisor $g^*M$, which is $(f \circ g)$-semi-ample by
  \cite[Lemma 2.11$(i)$]{CT20} and satisfies $\cO_{\tilde{X}}(g^*D+g^*D') \simeq
  \cO_{\tilde{X}}(n\,g^*M)$, and an effective $\QQ$-divisor with simple
  normal crossings support and coefficients in $[0,1)$.
  Now applying Step \ref{step:kvvanishingsncfbigsa} and
  Proposition \ref{prop:aisastar}$(\ref{prop:aisastargr})$ on $\tilde{X}$, the
  canonical morphisms
  \[
    R^i(f \circ g)_*\bigl(\omega_{\tilde{X}}(\tilde{L})\bigr) \longrightarrow 
    R^i(f \circ g)_*\bigl(\omega_{\tilde{X}}(\tilde{L}+g^*D)\bigr)
  \]
  are injective for all $i$.
  Since $g^*M$ is also $g$-semi-ample (by \cite[Lemma 2.10$(i)$]{CT20}) and
  $g$-big, Step \ref{step:kvvanishingsncfbigsa} and
  Proposition \ref{prop:aisastar}$(\ref{prop:aisastarkv})$ imply
  \[
    R^ig_*\bigl(\omega_{\tilde{X}}(\tilde{L})\bigr) = 0
  \]
  for all $i > 0$.
  The same argument using the Leray spectral sequence as in Step
  \ref{step:kvvanishingdualvanconcl} implies that the canonical morphisms
  \[
    R^if_*\bigl(\omega_{X}(L)\bigr) \longrightarrow 
    R^if_*\bigl(\omega_{X}(L+D)\bigr)
  \]
  are injective for all $i$.
\end{proof}
\subsection{The Kawamata--Viehweg vanishing theorem and Koll\'ar's injectivity
theorem for klt pairs}\label{sect:proofoftheorema}
We can now prove Theorem \ref{thm:kvvanishing}.
\begin{proof}[Proof of Theorem \ref{thm:kvvanishing}]
  For $(\ref{thm:kvvanishinginj})$,
  since the map $\cO_X \to \cO_X(D)$ factors the map $\cO_X \to \cO_X(D+D')$, we
  can replace $D$ by $D+D'$ to assume that $\cO_X(D) \simeq \cO_X(nM)$,
  and in particular, we may assume that $D$ is Cartier.
  \par Let $g\colon \tilde{X} \to X$ be a log resolution of the pair $(X,\Delta)$
  such that $\Exc(g) \cup g_*^{-1}\Delta$ has simple normal crossings support.
  Since $(X,\Delta)$ is klt, we can write
  \[
    K_{\tilde{X}} + g_*^{-1}\Delta \sim_\QQ g^*(K_X+\Delta) + \sum_i a_iE_i
  \]
  as in \cite[Notation 2.6]{Kol13},
  where the coefficients $a_i \in \QQ$ satisfy $a_i > -1$ for all $i$, the
  $E_i$ are exceptional, and $\lfloor g_*^{-1}\Delta \rfloor = 0$.
  We then have
  \begin{align*}
    K_{\tilde{X}} + g^*M + g_*^{-1}\Delta + \sum_i \bigl(\lceil a_i \rceil -
    a_i\bigr)E_i &\sim_\QQ g^*(K_X+M+\Delta) + \sum_i \lceil a_i \rceil E_i\\
    &\sim_\QQ g^*N + \sum_i \lceil a_i \rceil E_i.
  \end{align*}
  We therefore see that the divisor
  \[
    \tilde{N} \coloneqq g^*N + \sum_i \lceil a_i \rceil E_i
  \]
  satisfies
  \[
    \tilde{N} \sim_\QQ K_{\tilde{X}} + g^*M + g_*^{-1}\Delta + \sum_i
    \bigl(\lceil a_i \rceil - a_i\bigr)E_i.\smallskip
  \]
  \par In case $(\ref{thm:kvvanishingvan})$, we have now realized $\tilde{N}$ as
  $\QQ$-linearly equivalent to
  the sum of $K_{\tilde{X}}$, the $(f\circ g)$-big (by \cite[Lemma
  5.10]{LM}) and $(f\circ g)$-nef (by \cite[Lemma 2.17(1)]{Kee03})
  $\QQ$-divisor $g^*M$, and an effective $\QQ$-divisor with simple normal crossings
  support and coefficients in $[0,1)$.
  Since $g^*M$ is also $g$-nef (by the projection formula
  \cite[Proposition B.16]{Kle05}) and $g$-big, Theorem
  \ref{thm:kvvanishingdual}$(\ref{thm:kvvanishingdualvan})$
  and Proposition \ref{prop:aisastar}$(\ref{prop:aisastarkv})$ imply
  \begin{equation}\label{eq:vanishingforkvklt}
    R^i(f\circ g)_*\bigl(\cO_{\tilde{X}}(\tilde{N})\bigr) = 0
    \qquad \text{and} \qquad
    R^ig_*\bigl(\cO_{\tilde{X}}(\tilde{N})\bigr) = 0
  \end{equation}
  for all $i > 0$.
  \par We now consider the Leray spectral sequence
  \[
    E_2^{i,j} = R^if_*\bigl(R^jg_*\bigl(\cO_{\tilde{X}}(\tilde{N})\bigr)\bigr)
    \Rightarrow R^{i+j}(f \circ g)_*\bigl(\cO_{\tilde{X}}(\tilde{N})\bigr).
  \]
  By the vanishing $R^ig_*(\cO_{\tilde{X}}(\tilde{N})) = 0$,
  the $E_2$ page of this spectral sequence is
  concentrated in the row $j = 0$.
  We therefore have a natural isomorphism
  \[
    R^if_*\bigl(g_*\bigl(\cO_{\tilde{X}}(\tilde{N})\bigr)\bigr) \simeq R^i(f
    \circ g)_*\bigl(\cO_{\tilde{X}}(\tilde{N})\bigr) = 0
  \]
  for all $i > 0$ using \eqref{eq:vanishingforkvklt} for the vanishing on the
  right-hand side.
  Now by definition of $\tilde{N}$ and the projection formula \cite[Chapitre 0,
  Proposition 12.2.3]{EGAIII1}, we have
  \[
    g_*\bigl(\cO_{\tilde{X}}(\tilde{N})\bigr) \simeq \cO_X(N) \otimes_{\cO_X}
    g_*\biggl(\cO_{\tilde{X}}\biggl(\sum_i \lceil a_i \rceil E_i \biggr)\biggr)
    \simeq \cO_X(N)
  \]
  since $X$ is normal and the $E_i$ are $g$-exceptional (see \cite[Example
  2.1.16]{Laz04a}).
  We therefore have
  \[
    R^if_*\bigl(\cO_X(N)\bigr) \simeq
    R^if_*\bigl(g_*\bigl(\cO_{\tilde{X}}(\tilde{N})\bigr)\bigr) = 0.\smallskip
  \]
  \par In case $(\ref{thm:kvvanishinginj})$, we have now realized $\tilde{N}$ as
  $\QQ$-linearly equivalent to
  the sum of $K_{\tilde{X}}$, the $(f \circ g)$-semi-ample (by \cite[Lemma
  2.11$(i)$]{CT20}) $\QQ$-divisor $g^*M$ that satisfies $\cO_{\tilde{X}}(g^*D) \simeq
  \cO_{\tilde{X}}(n\,g^*M)$, and an effective $\QQ$-divisor with simple normal crossings
  support and coefficients in $[0,1)$.
  Theorem
  \ref{thm:kvvanishingdual}$(\ref{thm:kvvanishingdualinj})$
  and Proposition \ref{prop:aisastar}$(\ref{prop:aisastargr})$ imply the
  canonical morphisms
  \begin{equation}\label{eq:injectivityforkvklt}
    R^i(f \circ g)_*\bigl(\cO_{\tilde{X}}(\tilde{N})\bigr) \longrightarrow
    R^i(f \circ g)_*\bigl(\cO_{\tilde{X}}(\tilde{N}+g^*D)\bigr)
  \end{equation}
  are injective for all $i$.
  Since $g^*M$ is also $g$-semi-ample (by
  \cite[Lemma 2.10$(i)$]{CT20}) and $g$-big, Theorem \ref{thm:kvvanishingdual}
  and Proposition \ref{prop:aisastar}$(\ref{prop:aisastarkv})$ imply
  \[
    R^ig_*\bigl(\cO_{\tilde{X}}(\tilde{N})\bigr) = 0
  \]
  for all $i > 0$.
  The same argument using the Leray spectral sequence as in the previous
  paragraph implies that the canonical morphisms
  \[
    R^if_*\bigl(\cO_X(N)\bigr) \longrightarrow R^if_*\bigl(\cO_X(N+D)\bigr)
  \]
  are injective for all $i$.
\end{proof}

\begingroup
\makeatletter
\renewcommand{\@secnumfont}{\bfseries}
\part{Applications}\label{part:applications}
\makeatother
\endgroup

\section{Rational singularities}\label{sect:rationalsingularities}
In this section, we apply Theorem \ref{thm:zrmaindualvanishing} to study
rational singularities.
An interesting aspect of our proofs is that we can use the Zariski--Riemann
space to replace resolutions of singularities when they may not exist.
\par We start by proving our version of Boutot's theorem
(Theorem \ref{thm:boutot}) in
\S\ref{sect:boutot}.
In \S\ref{sect:pseudoratdeforms}, we prove that pseudo-rationality deforms in
equal characteristic zero.
We then show a version of Kempf's criterion for rational singularities and
pseudo-rational rings and show that derived splinters and rings with
rational singularities coincide for quasi-excellent $\QQ$-algebras.
In the last two subsections, we recall Bernasconi and Koll\'ar's result that
says that dlt pairs satisfy local rationality properties, and prove a criterion
for the Cohen--Macaulayness of Rees algebras and a Brian\c{c}on--Skoda theorem
for pseudo-rational rings.
All these results extend results known for rings essentially of finite type over
a field of characteristic zero to the context of (quasi-)excellent
$\QQ$-algebras.
\subsection{Boutot's theorem}\label{sect:boutot}
In this subsection, we prove our version of Boutot's theorem for
pseudo-rationality in equal characteristic zero (Theorem \ref{thm:boutot}).
This solves a conjecture of Boutot \cite[Remarque 1 on p.\ 67]{Bou87} and
answers a question of Schoutens \cite[(2) on p.\ 611]{Sch08} in the
affirmative.
\par We have modeled the proof of Theorem \ref{thm:boutot} after the proof of
the Boutot-type theorem for Du Bois singularities due to Godfrey and the author
\cite[Theorem A]{GM}.
The key insight is that $\RR\pi_*\cO_{\ZR(X)}$ can play the role the $0$-th
graded piece of the Deligne--Du Bois
complex $\underline{\Omega}^0_X$ did in our results for Du Bois
singularities.\medskip
\par We start by showing the following version of \cite[Proposition 3.1]{GM}.
As before, we will freely use the fact that the formation of $\ZR(X)$ commutes with
base change by quasi-compact separated \'etale morphisms
\cite[\href{https://stacks.math.columbia.edu/tag/087B}{Tag
087B}]{stacks-project}.
\begin{proposition}\label{prop:gmprop31}
  Let $f\colon W \to X$ be a surjective morphism between integral locally Noetherian
  schemes of equal characteristic zero.
  Suppose that for every $y \in X$, setting $X_y\coloneqq \Spec(\cO_{X,y})$ and
  denoting by $f_y\colon W_y \to X_y$ the base change of $f$ along
  $X_y \to X$, the natural morphism
  \[
    H^i_{\{y\}}(X_y,\cO_{X_y}) \longrightarrow
    \HH^i_{\{y\}}(W_y,\RR f_{y*}\cO_{W_y})
  \]
  is injective.
  If $W$ satisfies the condition in Theorem
  \ref{thm:kempflike}$(\ref{thm:kempflikepseudoratlike})$, then $X$ does also.
\end{proposition}
\begin{proof}
  By the functoriality of $\ZR(-)$ for dominant morphisms between integral
  schemes, we have the commutative diagram
  \[
    \begin{tikzcd}
      \ZR(X_y) \dar[swap]{\pi_{X_y}} & \lar \ZR(W_y) \dar{\pi_{W_y}}\\
      X_y & \lar[swap]{f_y} W_y\mathrlap{.}
    \end{tikzcd}
  \]
  Applying $\HH^i_{\{y\}}(X_y,-)$, we obtain the commutative diagram
  \[
    \begin{tikzcd}
      \HH^i_{\{y\}}\bigl(X_y,\RR\pi_{X_y*}\cO_{\ZR(X_y)}\bigr) \rar
      & \HH^i_{\{y\}}\bigl(X_y,\RR
      (f_{y}\circ\pi_{W_y})_*\cO_{\ZR(W_y)}\bigr)\\
      H^i_{\{y\}}(X_y,\cO_{X,y}) \rar[hook]\uar
      & \HH^i_{\{y\}}(X_y,\RR f_{y*}\cO_{W_y}) \arrow[hook]{u}
    \end{tikzcd}
  \]
  where the right vertical arrow is injective by Theorem
  \ref{thm:kempflike}, and the bottom horizontal arrow is an injection by
  hypothesis.
  By the commutativity of the diagram, we see that the left vertical arrow is an
  injection.
  Finally, since
  \[
    \HH^i_{\{y\}}\bigl(X_y,\RR\pi_{X_y*}\cO_{\ZR(X_y)}\bigr) \simeq
    H^i_{\pi_{X_y}^{-1}\{y\}}\bigl(\ZR(X_y),\cO_{\ZR(X_y)}\bigr),
  \]
  we see that $X_y$ satisfies the condition in Theorem
  \ref{thm:kempflike}$(\ref{thm:kempflikepseudoratlike})$.
\end{proof}
We can now prove the following Boutot-type theorem for pseudo-rationality.
More precisely, we show that the condition in Theorem
\ref{thm:kempflike}$(\ref{thm:kempflikepseudoratlike})$ descends for morphisms
of the type below.
Case $(\ref{thm:boutotnounramdsplit})$ for varieties is due to Kov\'acs
\cite[Theorem 1]{Kov00} and the
case when $X$ is excellent is due to Bernasconi and Koll\'ar
\cite[Proposition 2.14]{BK}.
Cases $(\ref{thm:boutotnounrampure})$--$(\ref{thm:boutotnounrampartpure})$
for varieties is due to Boutot \cite[Th\'eor\`eme on p.\ 65]{Bou87}.
The author of the present paper previously proved that case $(\ref{thm:boutotnounramfflat})$
holds in arbitrary characteristic as well \cite[Proposition 4.20]{Mur}.
\begin{theorem}\label{thm:boutotnounram}
  Let $f\colon W \to X$ be a surjective morphism between integral locally Noetherian
  schemes of equal characteristic zero.
  Assume one of the following holds:
  \begin{enumerate}[label=$(\roman*)$,ref=\roman*]
    \item\label{thm:boutotnounramdsplit}
      The natural morphism $\cO_X \to \RR f_*\cO_W$ admits a left inverse in
      the derived category of $\cO_X$-modules.
    \item\label{thm:boutotnounrampure}
      $f$ is affine, and for every affine open subset $U \subseteq X$, the
      $\cO_X(U)$-module map $\cO_X(U) \to \cO_W(f^{-1}(U))$ is pure.
    \item\label{thm:boutotnounramfflat} $f$ is faithfully flat.
    \item\label{thm:boutotnounrampartpure}
      $f$ is partially pure at every $y \in X$ in the sense that there is a
      $w \in W$ such that $f(w) = y$ and the map $\cO_{X,y} \to \cO_{W,w}$ is
      pure \emph{\cite[p.\ 38]{CGM16}}.
  \end{enumerate}
  If $Y$ satisfies the condition in Theorem
  \ref{thm:kempflike}$(\ref{thm:kempflikepseudoratlike})$, then $X$ does also.
\end{theorem}
\begin{proof}
  Any of the hypotheses above implies the hypothesis in Proposition
  \ref{prop:gmprop31} after possibly replacing $X$ and $W$ by maps on spectra of
  local rings.
  See the proof of \cite[Theorem 3.2]{GM}.
\end{proof}
In \cite[(2) on p.\ 611]{Sch08}, Schoutens asked the following: Given a
cyclically pure
map $R \to R'$ of $\QQ$-algebras, if $R'$ is locally
pseudo-rational, then is $R$ locally pseudo-rational?
Schoutens showed that in this situation, if $R'$ is regular, then
$R$ is locally pseudo-rational \cite[Main Theorem A]{Sch08}.
\par We answer Schoutens's question in the affirmative by showing Theorem
\ref{thm:boutot}.
This result also gives a new proof of Schoutens's theorem \cite[Main Theorem
A]{Sch08}.
\begin{proof}[Proof of Theorem \ref{thm:boutot}]
  By \cite[Corollary 3.12]{Has10}, $R$ is Noetherian and normal and $R \to
  R'$ is pure.
  By Theorem \ref{thm:boutotnounram}$(\ref{thm:boutotnounrampure})$, we know
  that $R$ satisfies the condition in Theorem
  \ref{thm:kempflike}$(\ref{thm:kempflikepseudoratlike})$.
  By Theorem \ref{thm:kempflike}, it therefore suffices to show that $R_\fp$ is
  analytically unramified for every prime ideal $\fp \subseteq R$.
  \par Let $\fp \subseteq R$ be a prime ideal.
  Then, there exists a maximal ideal $\fm \subseteq R'$ such that the map $R_\fp
  \to R'_\fm$ is pure by \cite[Lemma 2.2]{HH95}, and hence the map
  $(R_\fp)^\wedge \to (R'_\fm)^\wedge$
  on completions is pure \cite[Lemme A.2.2]{And18b}.
  Since $R'_\fm$ is analytically unramified, the completion $(R'_\fm)^\wedge$ is
  reduced, and hence $(R_\fp)^\wedge$ is also reduced.
  \par The ``in particular'' statement holds since if $R'$ is regular, then it
  is locally pseudo-rational \cite[\S4]{LT81}.
\end{proof}
\subsection{Deformation for pseudo-rationality}\label{sect:pseudoratdeforms}
We now prove that pseudo-rationality deforms.
This result is due to Elkik \cite{Elk78} when $R$ is essentially of finite type
over a field of characteristic zero and to the author \cite{Mur} when $R$ is a
quasi-excellent $\QQ$-algebra.
Here, the key insight is that the Zariski--Riemann space is functorial enough to
act as a replacement for the embedded resolutions of singularities used in
\cite{Elk78,Mur}.
\begin{theorem}[(cf.\ \citeleft\citen{Elk78}\citemid Th\'eor\`eme 5\citepunct
  \citen{Mur}\citemid Proposition 4.17\citeright)]\label{thm:elkik}
  Let $(R,\fm)$ be a Noetherian local $\QQ$-algebra and let $t \in \fm$ be a
  nonzerodivisor.
  If $R/tR$ is pseudo-rational, then $R$ is pseudo-rational.
\end{theorem}
\begin{proof}
  First, $R$ is normal by \cite[Proposition I.7.4]{Sey72} and Cohen--Macaulay by
  \cite[Theorem 17.3$(ii)$]{Mat89}.
  Next, $t$ maps to a nonzerodivisor in $\hat{R}$, and $\hat{R}/t\hat{R} \simeq
  (R/tR)^\wedge$ is reduced, and hence $\hat{R}$ is reduced by \cite[Proposition
  3.4.6]{EGAIV2}.
  \par Set $X \coloneqq \Spec(R)$ and $X_t \coloneqq \Spec(R/tR)$.
  By Lemma \ref{lem:kempflikepseudorat}, it remains to show that
  \[
    H^d_{\fm}(R) \overset{\delta^d_\pi}{\longrightarrow}
    H^d_{\pi^{-1}(\{\fm\})}\bigl(\ZR(X),\cO_{\ZR(X)}\bigr)
  \]
  is injective.
  Set $\ZR(X)_t \coloneqq \ZR(X) \times_X X_t$ and $\pi'_t\colon \ZR(X_t) \to
  X_t$.
  We claim we have a commutative diagram
  \[
    \begin{tikzcd}
      \ZR(X_t) \arrow[bend left=15]{drr}
      \arrow[bend right=15]{ddr}[swap]{\pi'_t} \arrow[dashed]{dr}\\
      & \ZR(X)_t \rar\dar{\pi_t} & \ZR(X)\dar{\pi}\\
      & X_t \rar[hook] & X
    \end{tikzcd}
  \]
  in the category of locally ringed spaces, where the square is Cartesian.
  By definition of limits, it suffices to show that for every morphism $f\colon
  W \to X$ in the inverse system defining $\ZR(X)$, there exists a morphism
  $f'_t\colon W'_t \to X_t$ in the inverse system defining $\ZR(X_t)$
  such that the diagram
  \[
    \begin{tikzcd}
      W'_t \arrow[bend left=15]{drr}
      \arrow[bend right=15]{ddr}[swap]{f'_t} \arrow[dashed]{dr}\\
      & W_t \rar\dar{f_t} & W \dar{f}\\
      & X_t \rar[hook] & X
    \end{tikzcd}
  \]
  commutes.
  First, choose an irreducible component $(W_t)_0$ of $W_t$ dominating $X_t$.
  Since $X_t$ is normal, $f$ is an isomorphism in codimension $1$, and hence
  $(W_t)_0$ maps birationally to $X_t$.
  We can then apply Chow's lemma \cite[Corollaire 5.6.2]{EGAII} to find a
  projective birational morphism $W'_t \to (W_t)_0$ such that the composition
  $W'_t \to X_t$ is projective birational, which proves the claim.
  \par Next, consider the commutative diagram
  \[
    \begin{tikzcd}
      0\dar & 0 \dar\\
      H^{d-1}_\fm(R/tR)\dar\rar[hook]
      & H^{d-1}_{\pi_t^{-1}(\{\fm\})}\bigl(\ZR(X)_t,\cO_{\ZR(X)_t}\bigr) \rar \dar
      & H^{d-1}_{\pi_t^{\prime-1}(\{\fm\})}\bigl(\ZR(X_t),\cO_{\ZR(X_t)}\bigr)\\
      H^d_\fm(R) \dar[swap]{t \cdot-}\rar{\delta^d_\pi}
      & H^d_{\pi^{-1}(\{\fm\})}\bigl(\ZR(X),\cO_{\ZR(X)}\bigr) \dar{t \cdot-}\\
      H^d_\fm(R) \dar\rar{\delta^d_\pi}
      & H^d_{\pi^{-1}(\{\fm\})}\bigl(\ZR(X),\cO_{\ZR(X)}\bigr) \dar\\
      0 & 0
    \end{tikzcd}
  \]
  with exact columns
  where the left half is obtained from \cite[Functoriality II.9.7]{Ive86}.
  The top left horizontal arrow is injective since the composition in the top
  row is
  injective by Lemma \ref{lem:kempflikepseudorat}, where we
  use the fact that the edge maps in Definition
  \ref{def:pseudorational}$(\ref{def:pseudoratbiratcond})$ behave well under
  composition of morphisms \cite[Proposition 1.12]{Smi97rat}.
  The columns are exact at the top by the fact that $R$ is Cohen--Macaulay and by
  Theorem \ref{thm:zrmaindualvanishing}$(\ref{thm:zrmaindualvanishingkv})$.
  \par Now suppose there exists an element $0 \ne \eta \in
  \ker(\delta^d_\pi)$.
  Since every element in $H^d_\fm(R)$ is annihilated by a power of $t$, after
  multiplying $\eta$ by a power of $t$ we may assume that $t\eta = 0$, in which
  case $\eta$ lies in the image of $H^{d-1}_\fm(R/tR)$ in the left column.
  The commutativity of the diagram implies that the composition
  \[
    H^{d-1}_\fm(R/tR) \longrightarrow
    H^d_{\pi^{-1}(\{\fm\})}\bigl(\ZR(X),\cO_{\ZR(X)}\bigr)
  \]
  is injective.
  Since $\eta \in \ker(\delta^d_\pi)$ by assumption, this shows that $\eta
  = 0$, a contradiction.
\end{proof}
\subsection{Kempf's criterion}\label{sect:kempfwithomega}
We show that Kempf's criterion \cite[Proposition on p.\
50]{KKMSD73} holds for quasi-excellent
schemes of equal characteristic zero with dualizing complexes.
This gives a proof of Kempf's criterion in equal characteristic zero
independent of \cite[Theorem 8.6]{Kov}.
\par The trace morphism below comes from the adjunction $\RR f_*
\dashv f^!$ in Grothendieck duality \citeleft\citen{Har66}\citemid Chapter VI,
Corollary 3.4$(b)$\citepunct \citen{Con00}\citemid Lemma 3.4.3\citeright.
\begin{proposition}[(cf.\ {\citeleft\citen{KKMSD73}\citemid Proposition on p.\
  50\citepunct \citen{Lip94}\citemid Lemma 4.2\citeright})]\label{prop:kempf}
  Let $f\colon X \to Y$ be a proper birational morphism of Noetherian
  schemes of equal characteristic zero
  such that $X$ is regular and such that $Y$ has a dualizing
  complex $\omega_Y^\bullet$.
  Denote by $\omega_X$ the unique cohomology sheaf of $f^!\omega_Y^\bullet$
  (after possibly applying shifts on each connected
  component of $X$).
  \par The following are equivalent:
  \begin{enumerate}[label=$(\roman*)$,ref=\roman*]
    \item\label{prop:kempfkovacstype}
      $Y$ is normal and $R^if_*\cO_X = 0$ for all $i > 0$.
    \item\label{prop:kempfpseudorat}
      $Y$ is Cohen--Macaulay and the trace morphism
      $f_*\omega_X \to \omega_Y$ is an isomorphism.
  \end{enumerate}
\end{proposition}
\begin{proof}
  By \cite[Lemma 4.2]{Lip94}, even without characteristic zero hypotheses,
  $(\ref{prop:kempfkovacstype})$ holds if and only if the trace morphism
  $\RR f_*\omega^\bullet_X \to \omega^\bullet_Y$ is a quasi-isomorphism.
  \par We want to show that $\RR f_*\omega^\bullet_X \to \omega^\bullet_Y$ is a
  quasi-isomorphism if and only if $(\ref{prop:kempfpseudorat})$ holds.
  Since $X$ is regular, after possibly applying shifts on each connected
  component of $X$, $\omega^\bullet_X$ is concentrated in one degree.
  Moreover, $R^if_*\omega_X = 0$ for all $i > 0$ by
  Theorem \ref{thm:mainvanishing}$(\ref{thm:mainvanishingkv})$ applied to
  $\sL = \cO_X$.
  We therefore see that $\RR f_*\omega^\bullet_X \to \omega^\bullet_Y$ is a
  quasi-isomorphism if and only if $f_*\omega_X \to \omega_Y$ is an isomorphism
  and $\omega_Y^\bullet$ is concentrated in one degree.
  But the condition that $\omega_Y^\bullet$ is concentrated in one degree is
  equivalent to $Y$ being Cohen--Macaulay by local duality \cite[Chapter V,
  Corollary 6.3]{Har66}.
\end{proof}
\subsection{Derived splinters}\label{sect:derivedsplinters}
We show that Kov\'acs's splitting criterion for rational singularities
\cite[Theorem 3]{Kov00} holds for quasi-excellent schemes of equal
characteristic zero.
The direction $\Rightarrow$ gives an independent proof of \cite[Theorem 8.7]{Kov}
in equal characteristic zero.
Recall that a scheme $S$ is a \textsl{derived splinter} if for
every proper surjective morphism $f\colon X \to S$, the natural morphism
$\cO_S \to \RR f_*\cO_X$ splits in the derived category of coherent
sheaves on $S$ \cite[Definition 1.3]{Bha12}.
\begin{theorem}[(cf.\ {\citeleft\citen{Kov00}\citemid Theorem 3\citepunct
  \citen{Bha12}\citemid Theorem 2.12\citeright})]\label{thm:derivedsplinters}
  Let $S$ be a quasi-excellent Noetherian scheme of equal characteristic zero.
  Then, $S$ is a derived splinter if and only if $S$ has rational singularities.
\end{theorem}
\begin{proof}
  $\Rightarrow$.
  Let $\pi\colon \ZR(S) \to S$ denote the canonical projection morphism from the
  Zariski--Riemann space of $S$.
  For every admissible blowup $\pi_\sI\colon S_\sI \to S$ and every $s \in S$,
  setting $S_s \coloneqq \Spec(\cO_{S,s})$ and $\pi_{\sI,s}\colon S_{s,\sI_s}
  \to S_s$, we know that the morphism
  \begin{align*}
    H^i_{\{s\}}\bigl( S_s,\cO_{S_s} \bigr) &\xrightarrow{\delta^i_{\pi_{\sI,s}}}
    H^i_{\pi_{\sI,s}^{-1}(\{s\})}\bigl(S_{s,\sI_s},\cO_{S_{s,\sI_s}}\bigr)
    \intertext{is injective for every $i$ since it admits a left inverse.
    Taking colimits over all $\sI \in \AId_S$ and applying Theorem
    \ref{lem:lccolimits}, we see that the morphism}
    H^i_{\{s\}}\bigl( S_s,\cO_{S_s} \bigr) &\xrightarrow{\delta^i_{\pi_{s}}}
    H^i_{\pi_{s}^{-1}(\{s\})}\bigl(\ZR(S_s),\cO_{\ZR(S_s)}\bigr)
  \end{align*}
  is injective for every $i$.
  By Theorem \ref{thm:kempflike}, this shows $S$ has rational singularities.
  \par $\Leftarrow$.
  Let $g\colon Y \to S$ be a proper surjective morphism.
  We want to show that $\cO_S \to \RR g_*\cO_Y$ splits in the derived category
  of coherent sheaves on $S$.
  By Chow's lemma \cite[Corollaire 5.6.2]{EGAII}, we may assume that $g$ is
  projective.
  By homogeneous prime avoidance \cite[Chapter III, \S1, no.\ 4,
  Proposition 8]{BouCA}, we can take repeated hyperplane sections of $Y$ to
  assume that $g$ generically finite.
  By the Raynaud--Gruson flattening theorem \cite[Premi\`ere partie,
  Th\'eor\`eme 5.2.2]{RG71}, we can then find a commutative square that fits
  into a diagram
  \[
    \begin{tikzcd}
      & Y'' \rar\dar[swap]{h} & Y\dar{g}\\
      W \rar{\mu} & Y' \rar{b} & S
    \end{tikzcd}
  \]
  where $b$ is a normalized blowup of $S$, $h$ is the strict transform of $g$
  along $b$, and $h$ is finite flat.
  We take $\mu\colon W \to Y'$ to be a resolution of singularities, which
  exists by \cite[Theorem 1.1]{Tem08}.
  Now consider the corresponding commutative diagram
  \[
    \begin{tikzcd}
      & \RR (b \circ h)_*\cO_{Y''} \dar[bend right=30,dashed] & \lar \RR g_*\cO_Y\\
      \RR (b \circ \mu)_*\cO_W \arrow[bend right=15,dashed]{rr}
      & \lar \RR b_*\cO_{Y'} \uar & \lar \cO_S\mathrlap{.} \uar
    \end{tikzcd}
  \]
  Since we are in equal characteristic zero and $Y'$ is normal, there is a
  section of $\cO_{Y'} \to h_*\cO_{Y''} \simeq \RR h_*\cO_{Y''}$ coming from the
  trace map \cite[Example 2.1]{Bha12}.
  By definition of rational singularities, the composition $\cO_S \to \RR (b
  \circ \mu)_*\cO_W$ is a quasi-isomorphism, and in particular, splits.
  Thus, $\cO_S \to \RR b_*\cO_{Y'}$ splits, which shows that
  $\cO_S \to \RR g_*\cO_Y$ splits by the commutativity of the diagram.
\end{proof}
\subsection{Rationality of dlt pairs}
In \cite{BK}, Bernasconi and Koll\'ar proved the following theorem without
characteristic assumptions by assuming that certain vanishing theorems hold and that
thrifty log resolutions exist.
Since the required vanishing statements hold in equal characteristic zero by Theorem
\ref{thm:kvvanishing}$(\ref{thm:kvvanishingvan})$ and thrifty log resolutions
are known to exist by \cite{Tem18}, we obtain the following
unconditional result.
We note Theorem \ref{thm:bk31} below was already noted in \cite[p.\ 2857]{BK} as a
consequence of Theorem \ref{thm:kvvanishing}$(\ref{thm:kvvanishingvan})$ in this
paper.
\begin{theorem}[(see {{\cite[Theorem 3.1]{BK}}})]\label{thm:bk31}
  Let $X$ be an excellent normal Noetherian scheme of equal characteristic zero
  with a dualizing complex $\omega_X^\bullet$.
  Let $\Delta$ be an effective $\RR$-Weil divisor on $X$ such that $(X,\Delta)$
  is dlt.
  Then, we have the following:
  \begin{enumerate}[label=$(\roman*)$,ref=\roman*]
    \item $X$ is Cohen--Macaulay.
    \item Let $D$ be a Weil divisor on $X$ such that $D+\Delta_D$ is
      $\RR$-Cartier for some $0 \le \Delta_D \le \Delta$.
      Then, $\cO_X(D)$ is Cohen--Macaulay.
    \item For every reduced Weil divisor $B \subseteq \lfloor \Delta \rfloor$,
      the pair $(X,B)$ is rational in the sense of \emph{\cite[Definition
      2.80]{Kol13}}.
      In particular, $X$ has rational singularities.
    \item Log canonical centers of $(X,\Delta)$ are normal and have rational
      singularities.
  \end{enumerate}
\end{theorem}
\begin{proof}
  We can apply \cite[Theorem 3.1]{BK} since Grauert--Riemenschneider vanishing
  holds in this setting by Theorem
  \ref{thm:kvvanishing}$(\ref{thm:kvvanishingvan})$, and thrifty log resolutions
  exist in this setting by \cite[Theorem 1.1.6]{Tem18}.
  Recall that if $X$ is normal and $D \subseteq X$ is a reduced Weil divisor,
  then a resolution $f\colon Y \to X$ is \textsl{thrifty} if $(Y,f_*^{-1}D)$ is
  snc (i.e., $Y$ is regular and $f_*^{-1}D$ has simple normal crossings),
  $f$ is an isomorphism over the generic point of
  every stratum of $\snc(X,D)$, and $f$ is an isomorphism at the generic point of
  every stratum of $(Y,f_*^{-1}D)$ (see \cite[Definition 2.79]{Kol13}).
\end{proof}
\begin{remark}\label{rem:bkcm}
  For the statement of Theorem \ref{thm:kvvanishing}, we need to know that if $X$
  is an excellent normal Noetherian scheme of equal characteristic zero with a
  dualizing complex $\omega_X^\bullet$ and there exists an effective $\QQ$-Weil
  divisor such that $(X,\Delta)$ is klt, then $X$ is Cohen--Macaulay.
  The proof of Theorem \ref{thm:bk31} in \cite{BK} only needs
  Theorem \ref{thm:kvvanishing} in the regular case.
  We may therefore use the regular case of Theorem \ref{thm:kvvanishing} to
  deduce that klt pairs are Cohen--Macaulay in the proof of the klt case of
  Theorem \ref{thm:kvvanishing}.
  Note that the fact that $X$ is Cohen--Macaulay is not used in the proof that
  the regular case of Theorem \ref{thm:kvvanishing} implies the klt case (see
  \S\ref{sect:proofoftheorema}).
\end{remark}
\subsection{Cohen--Macaulayness of Rees algebras and a Brian\c{c}on--Skoda
theorem for rational singularities}
Finally, we give a criterion for when Rees algebras and
associated graded rings of graded sequences of ideals are Cohen--Macaulay.
These results are essentially due to Sancho de Salas \cite{SdS87} and Lipman
\cite{Lip94}, who proved the Cohen--Macaulayness of $G_{\FF^{(e)}}$ and
$R_{\FF^{(e)}}$ assuming certain vanishing theorems hold for the projection morphism
$f\colon X \to \Spec(R)$.
In particular, Sancho de Salas noted the relevant vanishing theorems hold for
rings of finite type over $\CC$ \cite[Theorem 2.8$(a)$]{SdS87}.
We can prove the following statement for quasi-excellent rings
thanks to our version of the
Grauert--Riemenschneider vanishing theorem (Theorem
\ref{thm:maindualvanishing}$(\ref{thm:maindualvanishingkv})$).
See also \citeleft\citen{SdS87}\citemid Theorem 1.4 and Corollary 1.6\citepunct
\citen{Lip94}\citemid Theorems 4.1 and 4.3\citeright\ for partial converses to
these results.
\begin{theorem}[({see \citeleft\citen{SdS87}\citemid Theorem 1.7\citepunct
  \citen{Lip94}\citemid Theorems 4.1 and 4.3\citeright})]\label{thm:sdslip}
  Let $(R,\fm)$ be a Noetherian local $\QQ$-algebra of dimension $d$.
  Let $\FF \coloneqq (F_i)_{i=0}^\infty$ be a graded sequence of ideals in $R$ such
  that $F_0 = R$.
  Suppose that
  \[
    R_\FF \coloneqq \bigoplus_{i=0}^\infty F_i t^i
  \]
  is Noetherian of dimension $d+1$.
  Set $X \coloneqq \Proj_R(R_\FF)$ with
  canonical projection $f\colon X \to \Spec(R)$.
  \begin{enumerate}[label=$(\roman*)$,ref=\roman*]
    \item\label{thm:lip43}
      If $R$ is Cohen--Macaulay and $X$ is locally pseudo-rational, then
      \[
        G_{\FF^{(e)}} \coloneqq \bigoplus_{n=1}^\infty F_{en}/F_{e(n+1)}
      \]
      is Cohen--Macaulay for some $e > 0$.
    \item\label{thm:lip41} If $R$ and $X$ are locally pseudo-rational and
      locally quasi-excellent, then
      \[
        R_{\FF^{(e)}} \coloneqq \bigoplus_{n=1}^\infty F_{en}t^n
      \]
      is Cohen--Macaulay and generated in degree $1$ for some $e > 0$.
  \end{enumerate}
\end{theorem}
\begin{proof}
  In either case, note that $X$ is Cohen--Macaulay by
  definition of pseudo-rationality.
  \par For $(\ref{thm:lip43})$, we apply \cite[Theorem 4.3]{Lip94}.
  It suffices to show that
  \[
    H^i_{f^{-1}(\{\fm\})}(X,\cO_X) = 0
  \]
  for all $i < d$.
  This vanishing holds by Theorem
  \ref{thm:maindualvanishing}$(\ref{thm:maindualvanishingkv})$.
  \par For $(\ref{thm:lip41})$, we apply \cite[Theorem 4.1]{Lip94}.
  It suffices to show that
  \[
    \cO_{\Spec(R)} \longrightarrow \RR f_*\cO_X
  \]
  is a quasi-isomorphism.
  Since $R$ and $X$ are locally pseudo-rational and locally quasi-excellent,
  Theorem \ref{thm:kempflike} implies
  that both the composition
  \[
    \cO_{\Spec(R)} \longrightarrow \RR f_*\cO_X \overset{\sim}{\longrightarrow}
    \RR\pi_*\cO_{\ZR(X)}
  \]
  and the second morphism in this composition are quasi-isomorphisms in the
  derived category of $\cO_{\Spec(R)}$-modules, where $\pi\colon \ZR(X) \to
  \Spec(X)$ is the canonical projection.
  Since the composition $\ZR(X) \to X \to \Spec(R)$ is isomorphic to the
  projection morphism $\ZR(\Spec(R)) \to \Spec(R)$ by definition of the
  Zariski--Riemann space, we see that $\cO_{\Spec(R)} \to \RR f_*\cO_X$ is a
  quasi-isomorphism.
\end{proof}
This yields the following characterization of rational singularities in
equal characteristic zero, which for rings essentially of finite type over a
field is due to Lipman \cite{Lip94}.
\begin{corollary}[(see {\cite[p.\ 149]{Lip94}})]\label{cor:lip149}
  Let $(R,\fm)$ be a quasi-excellent Noetherian local $\QQ$-algebra.
  Then, $R$ has rational singularities if and only if there exists an ideal $I
  \subseteq R$ such that $R[It]$ is Cohen--Macaulay and $\Proj_R(R[It])$ is
  regular.
\end{corollary}
\begin{proof}
  $\Rightarrow$.
  Let $X \to \Spec(R)$ be any projective resolution of singularities, which
  exists by \cite[Chapter I, \S3, Main Theorem I$(n)$]{Hir64}.
  Then, $X \simeq \Proj_R(R[Jt])$ for some ideal $J \subseteq R$ by
  \cite[Corollaire 2.3.7]{EGAIII1}.
  By Theorem \ref{thm:sdslip}$(\ref{thm:lip41})$, $R[J^et]$ is Cohen--Macaulay for
  some $e > 0$.
  Since
  \[
    \Proj_R\bigl(R[Jt]\bigr) \simeq \Proj_R\bigl(R[J^et]\bigr),
  \]
  setting $I = J^e$ works.
  \par $\Leftarrow$.
  By \cite[Theorem 4.1]{Lip94}, we know that $\cO_{\Spec(R)} \to \RR f_*\cO_X$
  is a quasi-isomorphism.
  This implies $R$ has rational singularities since the definition of rational
  singularities is independent of the resolution of singularities chosen
  \cite[Remark 4.13]{Mur}.
\end{proof}
We also obtain the following version of the Brian\c{c}on--Skoda theorem for
rings with rational singularities.
This statement strengthens the Brian\c{c}on--Skoda theorem shown by Lipman and
Teissier for pseudo-rational rings in arbitrary characteristic \cite[Theorem
2.1]{LT81}.
The statement below is due to Huneke when $R$ is essentially of finite type over
a field of characteristic zero \cite{Hun00}.
\begin{corollary}[({cf.\ \cite[Corollary 4.8]{Hun00}})]\label{cor:bsrational}
  Let $R$ be a normal locally excellent Noetherian $\QQ$-algebra with rational
  singularities.
  Then, for every ideal $I \subseteq R$ of analytic spread $\ell$, we have
  \[
    \overline{I^n} \subseteq I^{n-\ell}
  \]
  for all $n \ge \ell$.
\end{corollary}
\begin{proof}
  Since the question is local, we can localize at every maximal ideal in $R$ to
  assume that $R$ is local.
  By \cite[Corollary 4.8]{Hun00}, it suffices to show there exists an ideal $J
  \subseteq R$ such that $\Proj_R(R[Jt])$ is regular and $R[Jt]$ is
  Cohen--Macaulay.
  This condition is equivalent to $R$ having rational singularities by Corollary
  \ref{cor:lip149}.
\end{proof}

\section{Complete local UFDs of dimension
\texorpdfstring{$\le$}{\unichar{"02264}} 4 with residue field
\texorpdfstring{$\CC$}{C}
are Gorenstein}\label{sect:applications}
In this section, we give another application of our vanishing theorems that is
not related to rational singularities.
In \cite[p.\ 17]{Sam61}, Samuel asked whether every UFD is Cohen--Macaulay.
While the answer is no in general \cite[Proposition on p.\ 655]{Ber67} (see also
\cite[\S4]{Lip75} for a survey), all complete local UFDs $(R,\fm)$
such that $R/\fm \simeq \CC$ are $S_3$.
This result is due to Raynaud (unpublished), Danilov \cite[Theorem 2]{Dan70},
Boutot \cite[Corollaire on p.\ 693]{Bou73}, and
Hartshorne--Ogus (when $R$ is algebrizable)
\cite[Theorem 2.5]{HO74}.
Hartshorne and Ogus showed that in dimensions $\le4$, such rings are in fact
Gorenstein \cite[Corollary 2.6]{HO74}.\smallskip
\par We remove the algebrizability condition in Hartshorne and Ogus's proof
using our vanishing theorems.
Hartshorne and Ogus's proof uses resolutions of singularities
\cite{Hir64} and the relative Grauert--Riemenschneider vanishing theorem \cite[Satz
2.3]{GR70}.
\begin{theorem}[(see \citeleft\citen{HO74}\citemid Corollary 2.6\citeright)]\label{thm:ho}
  Let $(R,\fm)$ be a complete local UFD of dimension $\le4$ such that $R/\fm
  \simeq \CC$.
  Then, $R$ is Gorenstein.
\end{theorem}
\begin{proof}
  Let $f\colon Z \to \Spec(R)$ be a resolution of singularities, and set $M^i
  \coloneqq R^if_*\cO_Z$ for every $i$.
  By \cite[Proposition 2.4]{HO74}, we have $M^1 = 0$.
  Replacing Hartshorne and Ogus's dual version of the Grauert--Riemenschneider
  vanishing theorem \cite[Proposition 2.2]{HO74} by Theorem
  \ref{thm:maindualvanishing}$(\ref{thm:maindualvanishingkv})$ in the proof of
  \cite[Theorem 2.3]{HO74}, we then see that $R$ satisfies $S_3$.
  Since $\dim(R) \le 4$, we see that
  $R$ satisfies Hartshorne and Ogus's condition $C$ \cite[Definition
  1.7]{HO74}, i.e., for every prime ideal $\fp \subseteq R$, we have
  \[
    \depth(R_\fp) \ge \min\biggl\{\dim(R_\fp),\frac{1}{2}\dim(R_\fp)+1\biggr\}.
  \]
  This implies $R$ is Gorenstein by \cite[Corollary 1.8]{HO74}.
\end{proof}

\section{Extensions to other categories}
In this section, we prove Theorem \ref{thm:kvvanishingothercats}, which extends
our main vanishing and injectivity theorem to other categories of spaces.
While most objects and notions appearing in the statement of Theorem
\ref{thm:kvvanishingothercats} were either defined or checked to be compatible
with existing definitions and relative GAGA in \cite[\S\S23--25]{LM}, the notion of bigness
in \cite[Defintion 25.5]{LM} requires projectivity assumptions to apply relative
GAGA theorems.
We therefore define relative bigness below before proving Theorem
\ref{thm:kvvanishingothercats} in the next subsection.
\begin{remark}
  The projectivity assumption on $f$ is necessary in cases
  $(\ref{setup:introformalqschemes})$,
  $(\ref{setup:introberkovichspaces})$,
  $(\ref{setup:introrigidanalyticspaces})$, and
  $(\ref{setup:introadicspaces})$
  since as far as we aware, appropriate versions of Chow's lemma for
  proper bimeromorphic morphisms in these categories do not exist in the
  literature.
  See \cite[\S1.6]{AT19} and \cite[Th\'eor\`eme 6.6]{Duc21}.
  We have written our proof of Theorem \ref{thm:kvvanishingothercats} so that if
  such a version of Chow's lemma becomes available in one of these
  categories, then Theorem \ref{thm:kvvanishingothercats} would also be true in
  that category.
\end{remark}
\subsection{Relatively big invertible sheaves and locally Moishezon morphisms}
In this subsection, we define relatively big invertible sheaves and locally
Moishezon morphisms in the
categories of spaces appearing in Theorem
\ref{thm:kvvanishingothercats}.\medskip
\par To be able to state Theorem
\ref{thm:kvvanishingothercats}$(\ref{thm:kvvanishingothercatsvan})$ without
projectivity assumptions, we need to define relative bigness without
relying on GAGA or the existence of a relatively ample invertible sheaf.
Instead, we adapt Nakayama's definition of relative bigness for
complex-analytic spaces \cite[Definition
on p.\ 568]{Nak87}, which uses relative Proj.
We define relative Proj as in
\cite[\href{https://stacks.math.columbia.edu/tag/084C}{Tag
084C}]{stacks-project} for algebraic spaces,
\cite[Chapter II, \S1.b]{Nak04}
for complex analytic spaces,
\cite[5.4]{Duc21} for Berkovich spaces,
\cite[Definition 2.3.3]{Con06} for rigid-analytic spaces,
and
\cite[Definition 6.7]{Zav} for adic spaces.
All these constructions are compatible with relative GAGA over affinoid
subdomains in the base, since locally, relative Proj is a closed subscheme of a
projective space over an affinoid space.
\par We now define relative bigness for invertible sheaves and Cartier divisors.
We restrict to the case of morphisms between integral schemes for simplicity.
The idea is to use functorial properties of Proj to discuss bigness instead of
generic fibers as Nakayama does in the complex-analytic case in \cite[Definition
on p.\ 568]{Nak87} (see also \cite[p.\ 1666]{Kol22})
since generic fibers to do not always exist in the categories we
are interested in.
\begin{definition}\label{def:relativelybig}
  Let $f\colon X \to Y$ be a proper morphism such that $X$ is integral.
  Consider an invertible sheaf $\sL$ on $X$.
  For each integer $n > 0$, consider the adjunction morphism
  $f^*f_*\sL^{\otimes n} \to \sL$.
  Let $\fb \subseteq \cO_X$ be the coherent ideal sheaf such that this
  adjunction morphism factors as
  \[
    f^*f_*\sL^{\otimes n} \longtwoheadrightarrow \fb \cdot \sL^{\otimes n}
    \hooklongrightarrow \sL^{\otimes n}.
  \]
  Taking symmetric algebras and relative Proj, we obtain the
  commutative diagram
  \begin{equation}\label{eq:bigdiagram}
    \begin{tikzcd}[column sep=0]
      \Bl_\fb X \arrow[start anchor={[xshift=-1em]south east},
      end anchor={[xshift=-.5em]north}]{dr}[swap]{\pi}
      \arrow[hook]{rr} & & \PP_X(f^*f_*\sL^{\otimes n})
      \arrow[start anchor={[xshift=1em]south west},
      end anchor={[xshift=.5em]north}]{dl} \arrow{r}{\sim}
      &[1.475em] X \times_Y \PP_Y(f_*\sL^{\otimes n})
      \dar{\textup{pr}_2}\\
      & X \arrow[dashed]{rr} & & \PP_Y(f_*\sL^{\otimes n}) \rar & Y
    \end{tikzcd}
  \end{equation}
  where the composition in the bottom row is $f$.
  We say that $\sL$ is \textsl{$f$-big} if there exists an integer $n > 0$
  such that the dashed arrow in the diagram above is \textsl{generically finite}
  onto the closure of its image, i.e., is finite away from a nowhere dense set
  in the closure of its image.
  We extend this definition to Cartier divisors $L$ on $X$ by asking that its
  associated invertible sheaf $\cO_X(L)$ is $f$-big.
  If $D$ is a $\QQ$-Cartier divisor, then we say that $D$ is \textsl{$f$-big}
  if some positive integer multiple of $D$ is $f$-big.
\end{definition}
We note the partially defined map $X \dashrightarrow \PP_Y(f_*\sL^{\otimes n})$
in \eqref{eq:bigdiagram}
is meromorphic in the sense of Remmert \citeleft\citen{Rem57}\citemid Def.\
15\citepunct \citen{Pet94}\citemid Definition 1.7\citeright\ 
in the complex analytic case and 
is meromorphic in the sense of Morrow and Rosso \cite[Definition 3.2]{MR23} in
the non-Archimedean case.
\begin{remark}
  Suppose $f$ as in Definition \ref{def:relativelybig} is projective.
  Then, we see that $X \dashrightarrow \PP_Y(f_*\sL^{\otimes n})$ is generically
  finite if and only if for every affinoid subdomain $U \subseteq Y$, this map
  is the relative analytification of a generically finite rational map of
  schemes over $U$.
  As a result, we see that our definition of relative bigness coincides with
  our previous definition in \cite[Definition 25.5]{LM} for projective
  morphisms, and hence is compatible with GAGA.
\end{remark}
We define (locally) Moishezon morphisms as follows.
Our definition is an adaptation of the definition for complex analytic spaces
in \citeleft\citen{Moi74}\citemid Definition 2\citepunct
\citen{Kol22}\citemid Definition 11\citeright.
\begin{definition}
  Let $f\colon X \to Y$ be a proper morphism.
  We say that $f$ is \textsl{Moishezon} if the morphism $f$ is bimeromorphic
  (over $Y$) to a projective morphism.
  We say that $f$ is \textsl{locally Moishezon} if for every point $y \in Y$,
  there exists an affinoid subdomain $U \subseteq Y$ containing $y$ such that
  the morphism $f^{-1}(U) \to U$ is Moishezon.
\end{definition}
We now show that the existence of an $f$-big invertible sheaf implies that $f$
is locally Moishezon.
\begin{lemma}\label{lem:bigimplieslocallymoishezon}
  Let $f\colon X \to Y$ be a proper morphism such that $X$ is integral.
  Consider an $f$-big invertible sheaf $\sL$ on $X$.
  For all $n > 0$
  such that the dashed arrow in \eqref{eq:bigdiagram} is generically finite onto
  the closure of its image, let $f^p$ be the finite part of the Stein factorization
  \[
    \Bl_\fb X \longrightarrow X^p \overset{f^p}{\longrightarrow}
    \PP_Y(f_*\sL^{\otimes n})
  \]
  of the morphism $\Bl_\fb X \to \PP_Y(f_*\sL^{\otimes n})$.
  Then, the composition
  \[
    X^p \overset{f^p}{\longrightarrow}
    \PP_Y(f_*\sL^{\otimes n}) \longrightarrow Y
  \]
  is a locally projective morphism bimeromorphic (over $Y$) to $f$.
  In particular, $f$ is locally Moishezon.
\end{lemma}
\begin{proof}
  Let $n > 0$ be an integer such that the dashed arrow in \eqref{eq:bigdiagram}
  is generically finite onto the closure of its image.
  The morphism $\Bl_\fb X \to \PP_Y(f_*\sL^{\otimes n})$ is then generically
  finite onto its image.
  Now consider the Stein factorization of this morphism
  \[
    \Bl_\fb X \longrightarrow X^p \longrightarrow \PP_Y(f_*\sL^{\otimes n}),
  \]
  which exists for algebraic spaces by
  \cite[\href{https://stacks.math.columbia.edu/tag/0A1B}{Tag
  0A1B}]{stacks-project}, for semianalytic germs of complex analytic spaces
  by applying \cite[10.6.1]{GR84} to a representative of this morphism, for
  Berkovich spaces by \cite[Proposition 3.3.7]{Ber90}, for rigid analytic
  spaces by \cite[Proposition 9.6.3/5]{BGR84}, and for adic spaces locally
  of weakly finite type over a field by \cite[Theorem 3.9]{Man23}.
  The morphism $X^p \to \PP_Y(f_*\sL^{\otimes n})$ is finite, and the morphism
  $\Bl_\fb X \to X^p$ is surjective and bimeromorphic.
  We therefore see that the morphism
  \[
    X^p \overset{\phi}{\longrightarrow} \PP_Y(f_*\sL^{\otimes n}) \longrightarrow Y
  \]
  is bimeromorphic (over $Y$) to $f$ and
  is a locally projective morphism in the sense that for every $y \in Y$, there
  exists an affinoid subdomain $U \subseteq Y$ containing $y$ such that
  $X^p \times_Y U \to U$ is projective.
  In particular, $f$ is locally Moishezon.
\end{proof}
\subsection{Proof of Theorem
\texorpdfstring{\ref{thm:kvvanishingothercats}}{A'}}
We can now prove Theorem \ref{thm:kvvanishingothercats}.
\begin{proof}[Proof of Theorem \ref{thm:kvvanishingothercats}]
  Since the statement is local on $Y$, we may assume that $Y$ is
  affinoid.
  We will also be able to replace $Y$ by smaller affinoid subdomains during the
  proof below.\smallskip
  \par We first show that Theorem \ref{thm:kvvanishingothercats} holds when $f$
  is projective.
  The case for algebraic spaces follows by flat base change
  \cite[\href{https://stacks.math.columbia.edu/tag/073K}{Tag
  073K}]{stacks-project} applied to an \'etale cover of $Y$.
  For the other cases,
  by the GAGA-type results in \cite[\S\S23--25]{LM}, we know that
  $f$ is the analytification of a projective morphism of schemes and that the
  hypotheses in Theorem \ref{thm:kvvanishingothercats} are compatible under the
  GAGA correspondence.
  Since the vanishing and injectivity statements on the scheme side hold by 
  Theorem \ref{thm:kvvanishing}, the compatibility of analytification with
  higher direct images \citeleft\citen{EGAIII1}\citemid Proposition
  5.1.2\citepunct \citen{AT19}\citemid Theorem C.1.1\citepunct
  \citen{Poi10}\citemid Th\'eor\`eme A.1\citepunct \citen{Kop74}\citemid
  Folgerung 6.6\citepunct \citen{Hub07}\citemid Corollary 6.4\citeright\ shows
  that the desired vanishing and injectivity statements are preserved under
  analytification.\smallskip
  \par For the locally Moishezon case, we first reduce case
  $(\ref{setup:introberkovichspaces})$ to case
  $(\ref{setup:introrigidanalyticspaces})$.
  Note that in both cases, $Y$ is a point.
  Let $k \subseteq K_r$ be a field extension where $K_r$ is a complete
  non-trivially valued non-Archimedean field such that $X
  \mathbin{\hat{\otimes}}_k K_r$ is a strictly $K_r$-analytic space
  (such a $K_r$ exists as in the proof of \cite[Proposition 2.2.4]{Ber90}).
  Since coherent cohomology is compatible with the field extension $k
  \subseteq K_r$ by \cite[Proposition 2.1.2$(ii)$]{Ber90} (see the proof of
  \cite[Proposition 3.3.5]{Ber90}), we can detect the desired vanishing and
  injectivity statements after base change to $K_r$.
  Note that the formation of $\omega_X$ is compatible with base change to $K_r$
  \cite[Proposition 3.3.3$(ii)$]{Ber93}.
  Bigness is compatible with this extension since blowups and relative
  Proj are compatible with ground field extensions (locally, they are defined as
  the scheme-theoretic notions on affinoid subdomains, which are compatible with
  base change).
  Nefness is compatible with this extension because proper Berkovich curves are
  always projective
  \citeleft\citen{dJ95}\citemid Proposition 3.2 and Remark 3.3\citepunct
  \citen{Duc}\citemid Th\'eor\`eme 3.7.2\citeright, and hence are
  algebraizations of projective curves, to which we we can apply the
  scheme-theoretic result in \cite[Lemma 2.18(1)]{Kee03}.
  Finally, the comparison between (Berkovich) strictly $K_r$-analytic spaces and
  rigid $K_r$-analytic spaces in \cite[Theorem 1.6.1]{Ber93} is compatible with
  coherent cohomology \cite[p.\ 37]{Ber93}, the formation of $\omega_X$ (by
  definition on the smooth locus, which is preserved by \cite[Proposition
  3.3.1$(iii)$]{Ber90}),
  and both bigness (since finite morphisms are by \cite[Proposition
  3.3.2]{Ber90}) and nefness (since coherent cohomology, and hence the computation
  of Euler characteristics, is compatible as above).
  This shows we may reduce case
  $(\ref{setup:introberkovichspaces})$ to case
  $(\ref{setup:introrigidanalyticspaces})$.
  \par It remains to reduce to the case when $f$ is projective in cases
  $(\ref{setup:introalgebraicspaces})$,
  $(\ref{setup:introcomplexanalyticgerms})$, and
  $(\ref{setup:introrigidanalyticspaces})$,
  where in the last case we assume
  that $Y$ is a point.
  We claim that after possibly replacing $Y$ with an affinoid subdomain,
  we can construct a commutative diagram
  \begin{equation}\label{eq:redlocmoitoproj}
    \begin{tikzcd}[sep=1.475em]
      & \makebox[\widthof{$Y$}][c]{$\tilde{X}$}\dar{\mu}\\
      & \makebox[\widthof{$Y$}][c]{$\Bl_\fb X$}\arrow{dl}[swap]{\pi}\arrow{dr}{\phi}\\
      X \arrow{dr}[swap]{f}
      & & \makebox[\widthof{$X$}][l]{$X^p$} \arrow{dl}{f^p}\\
      & Y
    \end{tikzcd}
  \end{equation}
  of proper morphisms, where $\pi$, $\phi$, and $\mu$ are bimeromorphic, $f^p$
  is projective, and $\pi \circ \mu$ is a projective log resolution of
  $(X,\Delta)$ such that $f^p \circ \phi \circ \mu$ is projective.
  The bottom square exists in case $(\ref{thm:kvvanishingothercatsvan})$ by
  Lemma \ref{lem:bigimplieslocallymoishezon}, and exists in case
  $(\ref{thm:kvvanishingothercatsinj})$ by the locally Moishezon assumption.
  To construct the desired projective log resolution, we first apply Chow's
  lemma for bimeromorphic/birational morphisms, and then take a projective log
  resolution.
  Projective log resolutions exist in these categories by
  \citeleft\citen{AHV77}\citemid Theorem 5.3.1\citepunct
  \citen{Sch99}\citemid Theorem 3.2.3 and Remark on p.\ 327\citepunct
  \citen{Tem18}\citemid Theorem 1.1.13\citeright.
  For algebraic spaces, Chow's lemma (without birationality assumptions)
  holds by \cite[\href{https://stacks.math.columbia.edu/tag/088U}{Tag
  088U}]{stacks-project}.
  For semianalytic germs of complex analytic spaces,
  we can apply the complex-analytic
  version of Chow's lemma \citeleft\citen{Moi74}\citemid Par.\ 2\citepunct
  \citen{Hir75}\citemid Corollary 2\citeright\ to a representative of $\phi$.
  Finally, for rigid analytic spaces,
  we can apply
  Chow's lemma for proper
  Moishezon rigid analytic spaces \cite[Corollary 4.1.2]{Con10} since $Y$ is a
  point.
  \par Next, we construct appropriate divisors on $\tilde{X}$ to which we will
  apply the projective case of Theorem \ref{thm:kvvanishingothercats} proved
  above.
  As in the proof of Theorem \ref{thm:kvvanishing} in
  \S\ref{sect:proofoftheorema}, we can write
  \[
    K_{\tilde{X}} + (\pi \circ \mu)^*M + (\pi \circ \mu)_*^{-1}\Delta + \sum_i
    \bigl(\lceil a_i \rceil - a_i\bigr)E_i \sim_\QQ (\pi \circ \mu)^*N + \sum_i
    \lceil a_i \rceil E_i,
  \]
  in which case setting
  \[
    \tilde{N} \coloneqq (\pi \circ \mu)^*N + \sum_i \lceil a_i \rceil E_i,
  \]
  we have
  \[
    \tilde{N} \sim_\QQ K_{\tilde{X}} + (\pi \circ \mu)^*M + (\pi \circ
    \mu)_*^{-1}\Delta + \sum_i \bigl(\lceil a_i \rceil - a_i\bigr)E_i.
  \]
  Since $(\pi \circ \mu)^*M$ is $(\pi \circ \mu)$-big
  and $(\pi \circ \mu)$-nef by the projection formula,
  we know that
  \[
    R^i(\pi \circ \mu)_*\bigl(\cO_{\tilde{X}}(\tilde{N})\bigr) = 0
  \]
  for all $i > 0$ by the projective case of $(\ref{thm:kvvanishingothercatsvan})$.
  \par For $(\ref{thm:kvvanishingothercatsvan})$, we know that
  $(\pi \circ \mu)^*M$ is $(f^p \circ \phi \circ \mu)$-nef and
  $(f^p \circ \phi \circ \mu)$-big by the projection
  formula, the normality of $X$, and the commutativity of the diagram
  \eqref{eq:redlocmoitoproj}.
  We therefore see that
  \[
    R^i(f \circ \pi \circ \mu)_*\bigl(\cO_{\tilde{X}}(\tilde{N})\bigr)
    = R^i(f^p \circ \phi \circ \mu)_*\bigl(\cO_{\tilde{X}}(\tilde{N})\bigr) = 0
  \]
  for all $i > 0$ by the projective case of $(\ref{thm:kvvanishingothercatsvan})$.
  We then consider the Leray spectral sequence
  \[
    E_2^{i,j} = R^if_*\bigl(R^j(\pi \circ \mu)_*\bigl(\cO_{\tilde{X}}(\tilde{N})\bigr)\bigr)
    \Rightarrow R^{i+j}(f \circ \pi \circ \mu)_*\bigl(\cO_{\tilde{X}}(\tilde{N})\bigr).
  \]
  By the vanishing $R^i(\pi \circ \mu)_*(\cO_{\tilde{X}}(\tilde{N})) = 0$,
  the $E_2$ page of this spectral sequence is
  concentrated in the row $j = 0$.
  We therefore have a natural isomorphism
  \[
    R^if_*\bigl((\pi \circ \mu)_*\bigl(\cO_{\tilde{X}}(\tilde{N})\bigr)\bigr)
    \simeq R^i(f \circ \pi \circ \mu)_*\bigl(\cO_{\tilde{X}}(\tilde{N})\bigr) = 0
  \]
  for all $i > 0$.
  Now by definition of $\tilde{N}$ and the projection formula, we have
  \[
    (\pi \circ \mu)_*\bigl(\cO_{\tilde{X}}(\tilde{N})\bigr) \simeq \cO_X(N) \otimes_{\cO_X}
    (\pi \circ \mu)_*\biggl(\cO_{\tilde{X}}\biggl(\sum_i \lceil a_i \rceil E_i \biggr)\biggr)
    \simeq \cO_X(N)
  \]
  since $X$ is normal and the $E_i$ are $(\pi \circ \mu)$-exceptional (cf.\ \cite[Example
  2.1.16]{Laz04a}).
  We therefore have
  \[
    R^if_*\bigl(\cO_X(N)\bigr) \simeq
    R^if_*\bigl((\pi \circ \mu)_*\bigl(\cO_{\tilde{X}}(\tilde{N})\bigr)\bigr) = 0.
  \]
  \par For $(\ref{thm:kvvanishingothercatsinj})$, since the map $\cO_X \to
  \cO_X(D)$ factors the map $\cO_X \to \cO_X(D+D')$, we can replace $D$ by
  $D+D'$ to assume assume that $\cO_X(D) \simeq \cO_X(nM)$,
  and in particular, we may assume that $D$ is Cartier.
  The projective case implies that the canonical morphisms
  \[
    R^i(f \circ \pi \circ \mu)_*\bigl(\cO_{\tilde{X}}(\tilde{N})\bigr)
    \longrightarrow 
    R^i(f \circ \pi \circ \mu)_*\bigl(\cO_{\tilde{X}}\bigl(\tilde{N}+(\pi \circ
    \mu)^*D\bigr)\bigr)
  \]
  are injective for all $i$ since $(\pi \circ \mu)^*M$ is $(f \circ \pi \circ
  \mu)$-semi-ample (by the projection formula, the normality of $X$, and the
  commutativity of the diagram \eqref{eq:redlocmoitoproj})
  and $\cO_{\tilde{X}}((\pi \circ \mu)^*D) \simeq
  \cO_{\tilde{X}}(n\,(\pi \circ \mu)^*M)$.
  The same argument using the Leray spectral sequence as in the previous
  paragraph then implies that the canonical morphisms
  \[
    R^if_*\bigl(\cO_X(N)\bigr) \longrightarrow
    R^if_*\bigl(\cO_X(N+D)\bigr)
  \]
  are injective for all $i$.
\end{proof}

\addtocontents{toc}{\protect\bigskip}
\bookmarksetup{startatroot}

\end{document}